    \let\subsubsection\subparagraph
    \title  {Instantons and Bar-Natan homology}
    \author {P. B. Kronheimer and T. S. Mrowka%
      \thanks{%
        The work of the first author was supported by the National
        Science Foundation through NSF grants
        DMS-1405652 and DMS-1707924. The work of the second author was supported by
        NSF grants DMS-1406348 and DMS-1808794, and by a grant from the Simons Foundation,
        grant number 503559 TSM.}}
    \address {Harvard University, Cambridge MA 02138 \\
              Massachusetts Institute of Technology, Cambridge MA 02139}
\begin{document}

\maketitle

\begin{abstract}
A spectral sequence is established, whose $E_{2}$ page is Bar-Natan's
variant of Khovanov homology and which abuts to a deformation of
instanton homology for knots and links. This spectral sequence arises
as a specialization of a spectral sequence whose $E_{2}$ page is a
characteristic-2 version of $F_{5}$ homology, in Khovanov's
classification. 

\end{abstract}

\tableofcontents

\section{Introduction}

\subsection{Local coefficients}
In previous work, the authors introduced an instanton homology, $\Isharp(K)$,
for knots and links $K \subset S^{3}$. It was constructed as the Morse homology
of a Chern-Simons functional whose critical points correspond to certain
$\SU(2)$ representations of the fundamental group of the link complement. A variant
$\Jsharp(K)$ was introduced in \cite{KM-jsharp}, and was defined similarly, but
with $\SO(3)$ in place of $\SU(2)$. The coefficient ring in the
present paper will have characteristic $2$, and when this is
the case, both $\Isharp(K)$ and $\Jsharp(K)$ can be defined for webs
(embedded trivalent graphs) rather than only for links.  One of the
main results \cite{KM-unknot}
concerning $\Isharp(K)$ is the existence of a spectral sequence,
abutting to $\Isharp(K)$ and having $E_{2}$ page isomorphic to
Khovanov homology:
\begin{equation}\label{eq:kh-to-Isharp-ss}
          \kh(\bar K) \implies \Isharp(K)
\end{equation}
(The notation $\bar K$ denotes
the mirror image of $K$, and it appears here only because some of 
the traditional orientation conventions differ.)

In this paper $K$ will nearly always be a knot or link: trivalent
spatial graphs appear only in an auxiliary role.
We focus on a variant of $\Isharp(K)$ obtained by
introducing a system of \emph{local coefficients} on the relevant
configuration space of connections, $\bonf^{\sharp}(K)$. In doing so,
we build on two earlier papers. First, in \cite{KM-s-invariant}, the
authors introduced a local coefficient system, denoted here by
$\Gamma_{o}$.  It is defined as the pull-back of a local system on
$S^{1}$ via a map
\[
      h_{o} : \bonf^{\sharp}(K) \to S^{1}
\]
which in turn is defined using the holonomy of the connection along
$K$. In characteristic $0$, a spectral sequence similar to
\eqref{eq:kh-to-Isharp-ss} was established abutting to $\Isharp(K ;
\Gamma_{o})$, where the role of $\kh(\bar K)$ is
taken by Lee homology, a certain deformation of Khovanov homology
introduced in \cite{Lee-1, Lee-2}. The local system $\Gamma_{o}$ is a
system of free modules of rank $1$ over the ring $\Q[\Z] =
\Q[u,u^{-1}]$, though we will see later how it may be defined also in
characteristic $2$.

Second, in \cite{KM-deformation}, a local system $\Gamma_{\theta}$ was
introduced. Its construction is similar to $\Gamma_{o}$, but makes use of the
holonomy along the three edges of an auxiliary $\theta$-graph to
define a map
\[
         h_{\theta} : \bonf^{\sharp}(K) \to S^{1} \times S^{1} \times S^{1}.
\]
The result is a system of free rank-1 modules over the ring 
\begin{equation}\label{eq:Z3-ring}
  \F_{2}[\Z^{3}] = \F_{2}[T_{1}^{\pm 1}, T_{2}^{\pm 1}, T_{3}^{\pm 1}],
\end{equation}
where $\F_{2}$ is the field of 2 elements.

In this paper, we introduce a local system $\Gamma$ that generalizes both
$\Gamma_{o}$ (in its characteristic $2$ version) and $\Gamma_{\theta}$. It is a local system of rank-1
modules over a ring of Laurent series in $4$ variables:
\begin{equation}\label{eq:R-def}
            \cR = \F_{2} [T_{0}^{\pm 1}, T_{1}^{\pm 1}, T_{2}^{\pm 1}, T_{3}^{\pm 1}].
\end{equation}
The local system $\Gamma_{o}$ can be recovered as a specialization of $\Gamma$ by setting
$T_{i}=1$ for $i=1,2,3$, while the local system $\Gamma_{\theta}$ is
obtained by setting $T_{0}=1$.

\subsection{A spectral sequence from \texorpdfstring{$F_{5}$}{F5} homology}
Lee homology, mentioned above, is a member of a larger
family of a deformations of Khovanov homology
which are classified in \cite{Bar-Natan} and
\cite{Khovanov-Frobenius}. In the language and notation of
\cite{Khovanov-Frobenius}, these are link homologies $\mathsf{H}(K ;
F)$ arising from rank-2 Frobenius systems $F$. Among these, one that
is shown to be
universal in a particular sense arises from a Frobenius system
over
the ring $\Z[h,t]$. We will work exclusively in characteristic $2$ and
in place of Khovanov's $\Z[h,t]$ we introduce the ring
\[
       R_{5} = \F_{2}[h,t]
\]
and a corresponding Frobenius system $F_{5}$ whose underlying ring is
$R_{5}/(X^{2} + h X +t )$ and whose comultiplication is given by
\[
    \begin{aligned}
       \Delta: 1&\mapsto 1\otimes X + X\otimes 1 + h(1\otimes 1) \\
        \Delta: X&\mapsto X\otimes X + t(1\otimes 1).
    \end{aligned}
\]
(The subscript $5$ in $R_{5}$ and $F_{5}$ 
follows the naming convention in \cite{Khovanov-Frobenius}.)
The corresponding link homology is denoted here by $\mathsf{H}(K; F_{5})$. It
is a module over $R_{5}$ and is equal to $F_{5}$ when $K$ is the
unknot. The first topic of this paper is the construction of an instanton
homology $\Isharp(K ; \Gamma)$ corresponding to the local system of $\cR$-modules
$\Gamma$ described above, and the construction of the following
spectral sequence.

\begin{theorem}
   \label{thm:F5-ss} 
For a knot or link in\/ $\R^{3}$, there is a spectral sequence of\/ $\cR$-modules,
from the $F_{5}$ homology (in characteristic $2$) to the instanton
homology with local coefficients:
\begin{equation}\label{eq:F-to-Isharp-ss}
          \mathsf{H}(\bar K ;\, F_{5}  \otimes_{r} \cR ) \implies \Isharp(K; \Gamma).
\end{equation}
Here the base-change homomorphism $r: R_{5}\to \cR$ is given by
\[
\begin{aligned}
    r(h) &= P \\
    r(t) &= Q
\end{aligned}
\]
where
\begin{equation}\label{eq:PQ-formulae}
\begin{aligned}
        P &= T_{1}T_{2}T_{3} + T_{1}T_{2}^{-1}T_{3}^{-1} +
        T_{2}T_{3}^{-1}T_{1}^{-1} + T_{3}T_{1}^{-1}T_{2}^{-1}
\end{aligned}
\end{equation}
and
\[
    Q = \sum_{j=0}^{3} (T^{2}_{j} + T_{j}^{-2}).
\]
\end{theorem}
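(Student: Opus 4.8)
The plan is to construct $\Isharp(K;\Gamma)$ by following the templates of $\Isharp(K;\Gamma_{o})$ from \cite{KM-s-invariant} and $\Isharp(K;\Gamma_{\theta})$ from \cite{KM-deformation}, and then to run the cube-of-resolutions argument that produced \eqref{eq:kh-to-Isharp-ss} in \cite{KM-unknot}, carrying the rank-$1$ local system $\Gamma$ along at every stage. First I would fix the holonomy map
\[
    h=(h_{0},h_{1},h_{2},h_{3})\colon \bonf^{\sharp}(K)\longrightarrow S^{1}\times S^{1}\times S^{1}\times S^{1},
\]
where $h_{0}$ is the holonomy along $K$ used for $\Gamma_{o}$ and $(h_{1},h_{2},h_{3})$ is the holonomy along the three edges of an auxiliary $\theta$-graph used for $\Gamma_{\theta}$, and let $\Gamma$ be the pull-back to $\bonf^{\sharp}(K)$ of the tautological system of free rank-$1$ $\cR$-modules on $(S^{1})^{4}$. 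One checks gauge invariance and invariance under the holonomy perturbations, so that $\Isharp(K;\Gamma)$ is a well-defined finitely generated $\cR$-module, functorial for cobordisms via the relative versions of $h$, and that setting $T_{1}=T_{2}=T_{3}=1$ or $T_{0}=1$ recovers $\Gamma_{o}$ and $\Gamma_{\theta}$.

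Next I would establish the unoriented skein exact triangle for $\Isharp(-;\Gamma)$ relating the two resolutions of a crossing and the web obtained by inserting a trivalent vertex, exactly as in \cite{KM-unknot} but now over $\cR$, with the connecting maps defined by the usual cobordisms twisted by $\Gamma$. Iterating over the $N$ crossings of a diagram $D$ of $\bar K$ produces a cube of resolutions and hence a filtered complex, with finite filtration, computing $\Isharp(K;\Gamma)$; the associated spectral sequence of $\cR$-modules converges, and its $E_{1}$ page is the complex $\bigl(\bigoplus_{v}\Isharp(D_{v};\Gamma),\,d_{1}\bigr)$ indexed by the vertices $v$ of the cube, where each $D_{v}$ is an unlink and $d_{1}$ is assembled from the elementary merge and split cobordism maps.

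The heart of the proof is to identify this $E_{1}$ complex with the Khovanov-type complex of $\bar K$ for the Frobenius system $F_{5}\otimes_{r}\cR$. For this I would prove a Künneth/connected-sum formula showing that $\Isharp(-;\Gamma)$ of an $n$-component unlink is the $n$-fold tensor power over $\cR$ of the unknot module, that the unknot module is free of rank $2$ and carries the structure of the rank-$2$ Frobenius algebra $\cR[X]/(X^{2}+PX+Q)=F_{5}\otimes_{r}\cR$, and that the merge and split maps are precisely the multiplication and comultiplication of that Frobenius system. Granting this, $d_{1}$ is the Khovanov differential for $F_{5}\otimes_{r}\cR$, so $E_{2}\cong\mathsf{H}(\bar K;\,F_{5}\otimes_{r}\cR)$ and \eqref{eq:F-to-Isharp-ss} follows; the mirror $\bar K$ enters through the same orientation conventions already noted for \eqref{eq:kh-to-Isharp-ss}.

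I expect the main obstacle to be the last assertion, and in particular pinning down the Frobenius parameters: showing $r(h)=P$ and $r(t)=Q$ with the explicit Laurent polynomials in \eqref{eq:PQ-formulae}. This reduces to an instanton computation on a single model object — a planar unknot together with its auxiliary $\theta$-graph — where one must diagonalize the endomorphisms of the rank-$2$ unknot module induced by $h_{0},h_{1},h_{2},h_{3}$ and read off the characteristic polynomial $X^{2}+hX+t$ of the resulting $X$-action; the polynomials $P$ and $Q$ should emerge as the symmetric functions of the holonomy eigenvalues $T_{j}^{\pm1}$ summed over the finitely many flat connections on the model, with $P$ depending only on $T_{1},T_{2},T_{3}$ because it comes from the $\theta$-graph while $Q$ also sees the holonomy along $K$. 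Careful bookkeeping of signs, of the web-vertex relations, and of the compatibility of the holonomy perturbations is the delicate part; a secondary obstacle is upgrading the naturality statements for $\Isharp$ over cobordisms of webs from \cite{KM-unknot} to the local-coefficient setting, which is what guarantees that the elementary cobordism maps assemble into a genuine complex.
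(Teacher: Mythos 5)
Your overall architecture is the same as the paper's: $\Gamma$ is built exactly as you describe from the four holonomy maps $(h_{0},h_{1},h_{2},h_{3})$, the spectral sequence comes from the filtered cube of resolutions as in \cite{KM-unknot}, the unlink homology is identified with a tensor power of the rank-$2$ unknot module $V$ by excision, and the merge and split maps are shown to be the structure maps of a rank-$2$ Frobenius system over $\cR$, which is then matched against Khovanov's $F_{5}$ by comparing multiplications and comultiplications. So the skeleton is right, and you correctly locate the crux in pinning down the Frobenius parameters.

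The gap is in the mechanism you propose for that crux. The $X$-action on $V$ is not ``induced by $h_{0},\dots,h_{3}$'': the holonomy maps only define the coefficient system. In the paper the $X$-action is the dot operator $\Lambda_{q}$, defined from a relative two-dimensional characteristic class $\lambda_{pq}$ on $\bonf^{\sharp}(K)$ attached to an arc joining a dot on $K$ to a dot on $\theta$ (summed over the three edges of $\theta$, Definition~\ref{def:Lambda3}). Moreover, your plan to diagonalize and read off $P$ and $Q$ as symmetric functions of holonomy eigenvalues cannot work over $\cR$ itself: $x^{2}+Px+Q$ does not factor there. The natural candidate roots $A=T_{1}(T_{2}T_{3}+T_{2}^{-1}T_{3}^{-1})$ and $A'=T_{1}^{-1}(T_{2}^{-1}T_{3}+T_{2}T_{3}^{-1})$ satisfy $A+A'=P$ but $AA'=T_{2}^{2}+T_{2}^{-2}+T_{3}^{2}+T_{3}^{-2}\ne Q$; the factorization appears only after a base change with $\s(T_{0})=\s(T_{1})$, which is exactly why the Bar-Natan statement is a corollary rather than the theorem itself. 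The paper instead obtains the two coefficients by separate arguments: the linear term from the computation $w_{2}(\mathbb{E}_{y})=P$ for the basepoint bundle (Lemma~\ref{lemma:w-relations}), giving $\Lambda_{pq}+\Lambda'_{pq}=\s(P)$; and the constant term from a codimension-$4$ bubbling analysis of the product $\Lambda_{pq}\Lambda'_{pq}$ (Lemma~\ref{lemma:Lambda-relations}), in which the contribution of each dot is a universal one-variable Laurent series $F$, pinned down as $F(T)=T^{2}+T^{-2}$ by an indirect algebraic argument on the $\theta$-graph. Some substitute for this bubbling computation is needed; without it your proposal does not determine $Q$, and hence does not establish $r(t)=Q$.
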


\begin{remark}
Because $\cR$ is a free module over $R_{5}$, one can take the tensor
product outside and rewrite the spectral sequence as
\[
          \mathsf{H}(\bar K ; F_{5}  )  \otimes_{r} \cR \implies \Isharp(K; \Gamma).
\]    
\end{remark}

\subsection{Bar-Natan homology}
By base-change of the coefficient ring via a  further ring homomorphism $\s:\cR\to\cS$, one obtains specializations
of the spectral sequence \eqref{eq:F-to-Isharp-ss}:
   \begin{equation}\label{eq:F-to-Isharp-ss-bc}
          \mathsf{H}(\bar K ;\, F_{5}  \otimes_{\s\comp r} \cS ) \implies \Isharp(K;\, \Gamma\otimes_{\s}\cS).
\end{equation}
As a particular case of this construction, we can obtain a spectral
sequence  from the \emph{graded Bar-Natan} link homology  $\BN(K)$
introduced in \cite{Bar-Natan}. There is in fact some freedom in the
construction of such a spectral sequence. To explain this, recall that in the context of
\cite{Khovanov-Frobenius} and \cite{Bar-Natan}, the homology $\BN(K)$
arises from the Frobenius system $F_{5}$ by a base-change
\[
            \tau_{\bn} : R_{5} \to S_{\bn}
\]
where $S_{\bn}=\F_{2}[h]$ and $\tau_{\bn}$ is the homomorphism
sending $t$ to $0$. 
We write $F_{\bn} = F_{5}\otimes_{\tau_{\bn}} S_{\bn}$ for the corresponding Frobenius
system, so that $\BN(K)$ is short-hand for $\mathsf{H}(K;
F_{\bn})$. Specifically, the underlying algebra of the Frobenius
system $F_{\bn}$ is
\[
           S_{\bn}[X] / (X^{2} + hX)
\]
and the comultiplication is
\[
    \begin{aligned}
       &\Delta:& 1&\mapsto 1\otimes X + X\otimes 1 + h (1\otimes 1) \\
        &\Delta:& X&\mapsto X\otimes X.
    \end{aligned}
\]

At the expense of working over a larger ring than
$S_{\bn}$, we can equivalently consider any ring homomorphism
\[
         \tau : R_{5} \to S
     \]
with the following two properties:
\begin{enumerate}
\item the polynomial $x^{2} + \tau(h) x + \tau(t)$
factorizes:
\begin{equation}\label{eq:bn-factors}
                    x^{2} + \tau(h) x +  \tau(t) = (x+a)(x+a'), \quad
                    (a,a' \in S);
                \end{equation}
\item \label{item:free-h} the ring $S$ is a free
    module over $S_{\bn}=\F_{2}[h]$ via the homomorphism $\tau_{1}:
    h\mapsto \tau(h)$.
\end{enumerate} 
When  factorization occurs, the
Frobenius system $F_{5}\otimes_{\tau} S$ can be described in terms of
a new generator $M = X+a'$, and the algebra becomes 
\[S[M] / (M^{2} + \tau(h) M).\] Thus the ``$t$'' term disappears from
the characteristic polynomial of $M$. The comultiplication is
  \[
    \begin{aligned}
       &\Delta:& 1&\mapsto 1\otimes M + M\otimes 1 + \tau(h)(1\otimes 1) \\
        &\Delta:& M&\mapsto M\otimes M.
    \end{aligned}
\]
When condition \ref{item:free-h} holds, an application of the
universal coefficient theorem shows that
\[
        \mathsf{H}(K ; F_{5}\otimes_{\tau} S) \cong \BN(K)\otimes_{\tau_{1}} S
\]
That is, the link homology arising from the Frobenius system
$F_{5}\otimes_{\tau} S$ is isomorphic to the graded Bar-Natan homology
with the coefficients extended trivially from $S_{\bn}=\F_{2}[h]$ to $S$.

With this in mind, we return to the instanton homology $\Isharp(K;
\Gamma)$ as a module over $\cR$. Suppose we
find a ring $\cS$, and a base change
\[
      \s: \cR \to \cS,
\]
such that the counterparts of the two conditions above hold:
\begin{enumerate}
\item  \label{item:factors-Isharp} the polynomial $x^{2} + \s(P) x + \s(Q)$
factorizes in $\cS[x]$:
\begin{equation}
                    x^{2} + \s(P) x +  \s(Q) = (x+A)(x+A'), \quad
                    (A,A' \in \cS);
                \end{equation}
\item \label{item:free-h-Isharp} the ring $\cS$ is a free
    module over $S_{\bn}=\F_{2}[h]$ via the homomorphism $r_{1}:
    h\mapsto \s(P)$.
\end{enumerate} 
If we examine the spectral sequence \eqref{eq:F-to-Isharp-ss-bc} under
these conditions, we see from the observations above that the link homology that appears on the
left (the $E_{2}$ page of the spectral sequence) is isomorphic to
graded Bar-Natan homology with a trivial extension of coefficients:
\[
       \mathsf{H}(\bar K ;\, F_{5}  \otimes_{\s\comp r} \cS ) \cong \BN(\bar
       K)\otimes_{r_{1}} \cS.
\]
In this way we obtain a spectral sequence from $\BN(\bar
K)\otimes_{r_{1}}\cS$ to $\Isharp(K; \Gamma \otimes_{\s} \cS)$.

To be specific about a base change that realizes the requirements
\ref{item:factors-Isharp} and \ref{item:free-h-Isharp}, we can
consider
\begin{equation}\label{eq:sbn-def}
        \s_{\bn} : \cR \to \cS_{\BN}
\end{equation}
where $\cS_{\BN}= \F_{2}[T_{1}^{\pm 1} , T_{2}^{\pm 1}, T_{3}^{\pm
  1}]$ is a ring of Laurent series in three variables, and
\begin{equation}\label{eq:sbn-base-change}
\begin{aligned}
    \s_{\bn}(T_{0})&=T_{1} \\
    \s_{\bn}(T_{i})&= T_{i} ,\qquad i=1,2,3.
  \end{aligned}
\end{equation}
We can write
\[
\begin{aligned}
    \s_{\bn}(P) &= A + A' \\
    \s_{\bn}(Q) &= AA',
\end{aligned}
\]          
where
\begin{equation} \label{eq:A-formulae}
    \begin{aligned}
        A &= T_{1}(T_{2}T_{3} + T_{2}^{-1}T_{3}^{-1})\\
        A' &= T_{1}^{-1}(T_{2}^{-1}T_{3} + T_{2}T_{3}^{-1})\\
    \end{aligned}
\end{equation}
so that the factorization \ref{item:factors-Isharp} indeed
occurs. Putting this together, we have the following statement.

\begin{corollary}\label{cor:BN-ss}
There is spectral sequence of modules over the Laurent series ring
$\cS_{\BN}$ in three variables,  from the graded Bar-Natan homology in
characteristic $2$,
\[
     \BN(\bar K) \otimes_{r_{1}} \cS_{\BN} \implies \Isharp(K ; \Gamma_{\BN}),
\]
to the instanton homology group with coefficients in the local system 
$\Gamma_{\BN}= \Gamma \otimes_{\s_{\bn}}\cS_{\BN}$, where the base
change $\s_{bn}$ is given by \eqref{eq:sbn-base-change}.
\end{corollary}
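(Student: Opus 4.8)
\emph{Strategy.} The corollary is the special case $\cS=\cS_{\BN}$, $\s=\s_{\bn}$ of the general construction set up just before the statement: once one checks that the base change $\s_{\bn}$ of \eqref{eq:sbn-base-change} satisfies the two conditions \ref{item:factors-Isharp} and \ref{item:free-h-Isharp}, the conclusion is purely formal — apply Theorem~\ref{thm:F5-ss}, specialize the resulting spectral sequence to the form \eqref{eq:F-to-Isharp-ss-bc} along $\s_{\bn}$, and identify the $E_{2}$ page by the universal-coefficient statement already recorded in the text. So the plan has only two substantive ingredients, the factorization and the freeness, and of these the freeness is the only one that requires real work.

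\emph{The factorization.} To verify \ref{item:factors-Isharp} I would substitute $T_{0}=T_{1}$ into the formulae \eqref{eq:PQ-formulae}. The two terms $T_{1}^{\pm 2}$ coming from $j=0$ and $j=1$ in $Q$ cancel in characteristic $2$, so $\s_{\bn}(Q)=T_{2}^{2}+T_{2}^{-2}+T_{3}^{2}+T_{3}^{-2}$. With $A$ and $A'$ as in \eqref{eq:A-formulae}, a one-line Laurent-polynomial computation then gives $A+A'=\s_{\bn}(P)$ and $AA'=\s_{\bn}(Q)$, whence $x^{2}+\s_{\bn}(P)x+\s_{\bn}(Q)=(x+A)(x+A')$ in $\cS_{\BN}[x]$. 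This step is routine.

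\emph{The freeness.} This is the main obstacle. I must show $\cS_{\BN}$ is a free module over $S_{\bn}=\F_{2}[h]$ via $h\mapsto\s_{\bn}(P)$. Since $\s_{\bn}(P)$ is a non-constant Laurent polynomial, it is transcendental over $\F_{2}$, so the image of this map is a polynomial subring $\F_{2}[\s_{\bn}(P)]\subset\cS_{\BN}$ and it suffices to produce a free basis of $\cS_{\BN}$ over it. The plan is to pass to convenient generators. Put $w=T_{1}$ and
\[
  b=T_{2}T_{3}+T_{2}^{-1}T_{3}^{-1},\qquad b'=T_{2}T_{3}^{-1}+T_{2}^{-1}T_{3},
\]
so that $\s_{\bn}(P)=w b+w^{-1}b'$. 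First, $\cS_{\BN}$ is free of finite rank over the subring $\F_{2}[w^{\pm1},b,b']$: decompose $\cS_{\BN}$ into its four $\Z/2\times\Z/2$ graded pieces according to the parities of the exponents of $T_{2},T_{3}$, and apply twice the elementary fact that $\F_{2}[s^{\pm1}]$ is free of rank $2$ over $\F_{2}[s+s^{-1}]$ with basis $\{1,s\}$ (valid because $s^{-1}=s+(s+s^{-1})$ in characteristic $2$, and because $s+s^{-1}$ is transcendental over $\F_{2}$). Next — and this is the crux — observe that
\[
  b=w^{-1}\s_{\bn}(P)+w^{-2}b',
\]
so the subring $\F_{2}[w^{\pm1},b,b']$ equals $\F_{2}[w^{\pm1},b',\s_{\bn}(P)]$, which is again a polynomial ring, now in the algebraically independent elements $b'$ and $\s_{\bn}(P)$ over $\F_{2}[w^{\pm1}]$ (the elements $w$, $b'$, $\s_{\bn}(P)$ being algebraically independent over $\F_{2}$). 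Such a ring is $\F_{2}[\s_{\bn}(P)]\otimes_{\F_{2}}\F_{2}[w^{\pm1},b']$, hence free over $\F_{2}[\s_{\bn}(P)]$ on the monomials $w^{k}(b')^{j}$ with $k\in\Z$ and $j\ge0$. Composing the two steps yields an explicit $\F_{2}[\s_{\bn}(P)]$-basis of $\cS_{\BN}$, which establishes \ref{item:free-h-Isharp}.

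\emph{Conclusion.} With both conditions verified for $\s_{\bn}$, I would then run the argument of the preceding discussion verbatim: specializing the spectral sequence of Theorem~\ref{thm:F5-ss} along $\s_{\bn}$ gives \eqref{eq:F-to-Isharp-ss-bc}, a spectral sequence of $\cS_{\BN}$-modules abutting to $\Isharp(K;\Gamma\otimes_{\s_{\bn}}\cS_{\BN})=\Isharp(K;\Gamma_{\BN})$, and by the universal-coefficient identification — applicable exactly because of \ref{item:factors-Isharp} and \ref{item:free-h-Isharp} — its $E_{2}$ page is $\mathsf{H}(\bar K;F_{5}\otimes_{\s_{\bn}\comp r}\cS_{\BN})\cong\BN(\bar K)\otimes_{r_{1}}\cS_{\BN}$. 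Everything here is formal once Theorem~\ref{thm:F5-ss} is in hand; the genuine content of the corollary is the freeness statement above.
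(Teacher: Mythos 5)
Your proposal is correct and follows essentially the same route as the paper: Corollary~\ref{cor:BN-ss} is obtained there by specializing Theorem~\ref{thm:F5-ss} along $\s_{\bn}$ and identifying the $E_{2}$ page via the two conditions \ref{item:factors-Isharp} and \ref{item:free-h-Isharp}, with the factorization exhibited exactly by the $A$, $A'$ of \eqref{eq:A-formulae}. The one place you go beyond the text is the explicit check that $\cS_{\BN}$ is free over $\F_{2}[h]$ via $h\mapsto \s_{\bn}(P)$ (the paper asserts this implicitly), and your argument for it is sound.
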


We shall also introduce a \emph{reduced} companion of the instanton
homology group $\Isharp(K ; \Gamma_{\BN})$, which we shall denote by
$\Inat(K; \Gamma_{\BN})$. The spectral sequence of
Corollary~\ref{cor:BN-ss} has a reduced companion, whose $E_{2}$ page
is the reduced Bar-Natan homology.  Such a reduced instanton homology group
can be defined using any local system of the form $\Gamma
\otimes_{\s} \cS$ provided that the base change $\s$ satisfies
$\s(T_{0})=\s(T_{1})$. In particular, there is no reduced version of
$\Isharp(K;\Gamma)$ itself. Correspondingly, there is no reduced
version of the link homology $\mathsf{H}(K; F_{5})$ without first making a
base change so that the polynomial $x^{2} + h x + t$ factorizes.

There are smaller rings $\cS$ that can be used in place of our $\cS_{\BN}$
in formulating this corollary. Notice that a sufficient condition for the factorization
\ref{item:factors-Isharp} to occur is that that $\s(Q)=0$.
So as another particular example we can take
\[
        \cS=\F_{2}[T,T^{-1}]
\]
and 
\[
\s(T_{i})= \begin{cases}1,&i=0,1\\ T, & i=2,3. \end{cases}
\]
Then $\s(P) = T^{2} + T^{-2}$ and $\s(Q)=0$.
The homomorphism $r_{1}:S_{\bn}\to \cS$ in this case is therefore given by
\[ \begin{aligned}
     r_{1}(h) &= T^{2} + T^{-2}.
\end{aligned}
\]

There is also a \emph{filtered} (as
opposed to graded) version of Bar-Natan homology which we denote by
$\FBN(K)$. It is obtained via further specialization from
$(R_{5}, F_{5})$
by setting $t=0$ and $h=1$. The result is a finite-dimensional
$\F_{2}$ vector space. For an instanton companion, we may pass to 
$\F_{4}$, the field of 4 elements
or any extension of $\F_{2}$ in which there is a solution $\zeta$ for the
equation $T^{2}+T^{-2}=1$. We define
\begin{equation}\label{eq:filtered-BN-s}
    \begin{aligned}
        &{\s}_{\fbn} : \cR \to\F_{4}\\
              & T_{i} \mapsto  \begin{cases}1,&i=0,1\\ \zeta, & i=2,3, \end{cases}
    \end{aligned}
\end{equation}
so that $Q\mapsto 0$ and  $P \mapsto 1$.
There is a corresponding local system of 1-dimensional $\F_{4}$-vector spaces,
\[
          \Gamma_{\FBN} = \Gamma\otimes_{\s_{\fbn}} \F_{4}.
 \]
We then have

\begin{corollary}\label{cor:barBN-ss}
For a knot or link $K$, let $\FBN(K)$ denote the filtered
Bar-Natan homology over $\F_{2}$. Then there is a spectral sequence of
vector spaces over\/ $\F_{4}$,
\[
  \FBN(\bar K)\otimes \F_{4} \implies \Isharp(K ; \Gamma_{\FBN}),
\]
where $\Gamma_{\FBN}$ is the local system of\/ $\F_{4}$ vector
spaces described above.    
\end{corollary}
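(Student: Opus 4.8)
The plan is to deduce Corollary~\ref{cor:barBN-ss} as the special case $\cS=\F_{4}$, $\s=\s_{\fbn}$ of the base-changed spectral sequence \eqref{eq:F-to-Isharp-ss-bc}, so that no new analytic input beyond Theorem~\ref{thm:F5-ss} is needed. The first task is the elementary arithmetic underlying \eqref{eq:filtered-BN-s}. Writing $\F_{4}=\F_{2}(\omega)$ with $\omega^{2}+\omega+1=0$, the element $\zeta=\omega^{2}$ satisfies $\zeta^{2}=\omega$ and $\zeta^{-2}=\omega^{2}$, so $\zeta^{2}+\zeta^{-2}=\omega+\omega^{2}=1$; thus $\F_{4}$ does contain a solution of $T^{2}+T^{-2}=1$ and $\s_{\fbn}$ is defined. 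Using the formula \eqref{eq:PQ-formulae} for $P$ and the formula for $Q$, together with the relation $\zeta^{2}+\zeta^{-2}=1$ and $1+1=0$ in characteristic $2$, one checks that $\s_{\fbn}(P)=1$ and $\s_{\fbn}(Q)=0$. Hence the composite $\s_{\fbn}\comp r : R_{5}\to\F_{4}$ sends $h\mapsto 1$ and $t\mapsto 0$; in particular it takes values in $\F_{2}\subset\F_{4}$, so it factors as $\iota\comp\tau$ where $\tau : R_{5}\to\F_{2}$ is the specialization $h\mapsto 1$, $t\mapsto 0$ and $\iota : \F_{2}\hookrightarrow\F_{4}$ is the inclusion.

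Next I would identify the two ends of \eqref{eq:F-to-Isharp-ss-bc} in this case. The abutment is $\Isharp(K;\,\Gamma\otimes_{\s_{\fbn}}\F_{4})$, which is $\Isharp(K;\Gamma_{\FBN})$ by the definition of $\Gamma_{\FBN}$. For the $E_{2}$ page, transitivity of base change together with the factorization $\s_{\fbn}\comp r=\iota\comp\tau$ gives
\[
    F_{5}\otimes_{\s_{\fbn}\comp r}\F_{4}\;\cong\;\bigl(F_{5}\otimes_{\tau}\F_{2}\bigr)\otimes_{\F_{2}}\F_{4},
\]
and $F_{5}\otimes_{\tau}\F_{2}$ is, by definition, the $\F_{2}$-Frobenius system obtained by setting $t=0$ and $h=1$, whose link homology is $\FBN$. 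Since $\F_{4}$ is a field extension of $\F_{2}$, homology commutes with extension of scalars to $\F_{4}$, so
\[
    \mathsf{H}(\bar K;\,F_{5}\otimes_{\s_{\fbn}\comp r}\F_{4})\;\cong\;\FBN(\bar K)\otimes_{\F_{2}}\F_{4}.
\]
Substituting these two identifications into \eqref{eq:F-to-Isharp-ss-bc}, and noting that an $\cS$-module spectral sequence with $\cS=\F_{4}$ is exactly a spectral sequence of $\F_{4}$-vector spaces, yields the statement.

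The step that deserves attention is less an obstacle than a structural caveat: the freeness hypothesis \ref{item:free-h-Isharp} used to derive Corollary~\ref{cor:BN-ss} \emph{fails} here, since $\F_{4}$ is a torsion (hence non-free) $\F_{2}[h]$-module when $h$ acts as $1$. So one cannot quote the graded Bar-Natan identification $\mathsf{H}(\bar K;F_{5}\otimes\cS)\cong\BN(\bar K)\otimes\cS$; one must instead argue directly from the \emph{definition} of the filtered theory $\FBN$ as the $t=0$, $h=1$ specialization of $F_{5}$, for which only the automatic flatness of the field extension $\F_{4}/\F_{2}$ is required. I would also be careful to check that setting $h=1$ collapses the internal quantum grading compatibly on both sides, so that the conclusion is naturally a spectral sequence of filtered (ungraded) $\F_{4}$-vector spaces, in keeping with $\FBN$ being a filtered rather than graded invariant.
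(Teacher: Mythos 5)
Your proposal is correct and follows essentially the same route as the paper: the corollary is obtained from the universal spectral sequence of Theorem~\ref{thm:F5-ss} by the base change $\s_{\fbn}$ of \eqref{eq:filtered-BN-s}, with the $E_{2}$ page identified through the factorization of $\s_{\fbn}\comp r$ via the $h=1$, $t=0$ specialization over $\F_{2}$ followed by the flat extension to $\F_{4}$. Your remark that the freeness hypothesis \ref{item:free-h-Isharp} fails here and must be replaced by a direct appeal to the definition of $\FBN$ is accurate and is a point the paper leaves implicit, but it does not alter the argument.
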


Since $\Isharp(K;\Gamma)$  can be defined for trivalent spatial graphs
as well as knots and links, it would be interesting to know whether
there exist corresponding generalizations of the spectral sequence
\eqref{eq:F-to-Isharp-ss} or any of its specializations,
where the link homologies $\mathsf{H}(K; F_{5})$,  $\BN(K)$ or
$\FBN(K)$ are replaced by combinatorial invariants of
spatial graphs. Note however that $\Isharp(K;\Gamma)$ is a torsion
$\cR$-module when $K$ has vertices (it is annihilated by $P$), and
$\Isharp(K;\Gamma_{\FBN})$ is zero.

\section{The construction of  \texorpdfstring{$\Isharp(K ; \Gamma)$}{I(K;Gamma)}}
\label{sec:review}

In this section we describe the construction of the local system $\Gamma$ and the 
instanton homology $\Isharp(K ;
\Gamma)$. We will lean heavily on the expositions in the earlier papers
\cite{KM-deformation} and  \cite{KM-jsharp}, which were concerned with
two different specializations for $\Gamma$. 

\subsection{Instanton homology with constant coefficients}
A
trivalent graph, or \emph{web}, $K$ in a closed oriented $3$-manifold
$Y$ gives rise to an orbifold which we will simply denote by
$(Y,K)$. The isotropy groups are taken to be $\Z/2$ along edges of $K$
and $\Z/2 \times \Z/2$ at the vertices. We refer to such a special
orbifold as a \emph{bifold} and we consider orbifold $\SO(3)$ bundles
(or bifold bundles) $E$ over $(Y,K)$ requiring that the local isotropy
groups of the orbifold act effectively on the $\SO(3)$
fibers. \emph{Marking data} on $(Y,K)$ consists of an open set
$U_{\mu}$ and a bifold bundle $E_{\mu}\to U_{\mu} \setminus K$, and a
bifold connection is  \emph{marked} by $\mu$ if an isomorphism
$\sigma: E_{\mu} \to E|_{U_{\mu}}$ is given. An \emph{isomorphism} $\tau$ between
$\mu$-marked bundles with connection, $(E,A,\sigma)$,
$(E',A',\sigma')$ is an isomorphism of bifold bundles-with-connection such that the
automorphism $\sigma^{-1}\tau\sigma' : E_{\mu}\to E_{\mu}$ lifts to
the determinant-1 gauge group. We write
\[
   \bonf(Y, K ; \mu)
\]
for the space of isomorphism classes of $\mu$-marked bifold bundles
with connection.

 The marking data $\mu$ is \emph{strong} if the
automorphism group of every flat $\mu$-marked bifold connection is
trivial. A sufficient condition is that $U_{\mu}$ contains a vertex of
$K$, and in this case there are indeed no connections with non-trivial
stabilizer even in $\bonf(Y, K; \mu)$. 
With coefficients in the field $\F_{2}$ of two elements, one can
construct an $\SO(3)$ instanton homology group
\[
             J((Y,K) ; \mu) 
\]
for any bifold with strong marking data. The generators of the complex
from which this instanton homology is computed correspond to critical
points of a perturbed Chern-Simons functional on $\bonf(Y, K ; \mu)$. 
We may omit $Y$ from our notation for both $J$ and $\bonf$ when $Y$ is
understood
(which is often the case when $Y$ is $S^{3}$).

Consider next a framed base-point $y_{0} \in Y$ with standard
neighborhood $B(y_{0}) \cong B^{3}$. We write $Y^{o}$ for the
complement of this standard neighborhood:
\[
            Y^{o} = Y \setminus B(y_{0}).
\]
Given a web \[ K \subset
Y^{o},\] we may form a new web as  a split union
\begin{equation}\label{eq:union-with-theta}
       K^{\sharp} =  K \cup \theta
\end{equation}
where $\theta \subset B(y_{0})$ is a standard theta-graph (three edges and two
vertices) contained in the ball. We may then
define
\begin{equation}\label{eq:Jsharp-recap}
             \Jsharp(K) = J((Y, K^{\sharp}); \mu_{\theta})
\end{equation}
where the marking data $\mu_{\theta}$ consists of the ball $U_{\theta} = B(y_{0})$ containing
$\theta$, with $E_{\theta}$ the unique trivial
bundle on $B^{3}\setminus \theta$. The group $\Jsharp(K)$ was defined
first in \cite{KM-jsharp}, though the description in that paper was a slight
variant of this one. The description we have just given is from
\cite{KM-deformation}, where the equivalence of the two descriptions
is also proved.

In this paper, we will be almost exclusively concerned with the case
that the marking region $U_{\mu}$ is not just the ball $B(y_{0})$ but is instead
the whole of $Y$. The distinguished $\SO(3)$ bundle $E_{\mu}$ on
$Y\setminus K^{\sharp}$ may in general have non-zero Stiefel-Whitney
class
\[
\begin{aligned}[]
    w_{2}(E_{\mu}) &\in H^{2} ( Y\setminus K^{\sharp} ; \Z/2).
\end{aligned}
\]
We take this class to be represented by $\omega \subset Y$, which is a
codimension-2 submanifold with boundary. 
We make the following assumptions on $\omega$:
\begin{itemize}
\item $\omega$ is (the interior of) a union of circles and arcs with end-points on
    $K$;
\item $\omega$ is disjoint from the ball $B(y_{0})$ which contains $\theta$.
\end{itemize}
We require that $\omega$ represent $w_{2}(E_{\mu})$, in the sense that
\[
      w_{2}(E_{\mu}) = \mathop{PD}[\omega \cap (Y\setminus K^{\sharp})].
\]
Having chosen $\omega$, we shall trivialize $E_{\mu}$ on the
complement of $\omega$, so that the obstruction to extending the
trivialization across each component of $\omega$ is non-zero. We use
this trivialization to give a lift of $E_{\mu}$ to an $\SU(2)$ bundle
on the complement of $\omega$.

Let us write $\mu_{\omega}$ for the marking data obtained in this way
from a 1-manifold $\omega \subset Y \setminus K^{\sharp}$.

\begin{definition}
Using the marking data $E_{\mu}$ as above, whose Stiefel-Whitney class
is dual to $\omega\subset Y\setminus K$, we write
    \begin{equation}\label{eq:Isharp-recap}
        \Isharp(K)_{\omega} = J((Y, K^{\sharp}); \mu_{\omega})
    \end{equation}
for the corresponding instanton homology of the web $K\subset
Y^{o}$.
    We also write
    \begin{equation}\label{eq:bonf-sharp}
        \bonf^{\sharp}(K)_{\omega} = \bonf((Y, K^{\sharp}) ; \mu_{\omega})
    \end{equation}
    for the corresponding configuration space of connections. 
When $\omega$ is empty, we simply omit it
from our notation, and write $\Isharp(K)$.
\end{definition}

When $K$ is a knot or link, this variant
coincides with $\Isharp(K)_{\omega}$ as introduced in \cite{KM-unknot} (though
in that paper the coefficient ring was $\Z$). As with $\Jsharp$, the
definition we have presented here is slightly different from the
earlier one: the difference is the use of the graph $\theta$ in place
of the Hopf link that was used in $\cite{KM-unknot}$. But the two
definitions give isomorphic homology groups, by the arguments from
\cite{KM-deformation}.

Because the marking data is all of $Y$, the gauge theory which
underlies this instanton homology is essentially an $\SU(2)$ gauge
theory.  In particular, we have the following identification:

\begin{lemma}\label{lemma:bonf}
    The space $\bonf^{\sharp}(K)_{\omega}$ parametrizes
    equivalence classes of data of the following sort:
    \begin{itemize}
    \item an $\SU(2)$ bundle $\hat E$ over
        $Y\setminus (K^{\sharp} \cup \omega)$;
    \item an $\SU(2)$ connection $\hat A$ in $\hat E$; subject to the
        restrictions,
    \item the associated $\SO(3)$ connection $A$ in the adjoint bundle
        of $\hat E$ is the restriction to
        $Y\setminus (K^{\sharp} \cup \omega)$ of a bifold bundle on
        the bifold $(Y, K^{\sharp})$;
    \item the limiting holonomy of $\hat A$ on small circles linking
        $\omega$ is $-1$.
    \end{itemize}
\end{lemma}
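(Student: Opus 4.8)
The plan is to unwind the definition of $\bonf((Y,K^{\sharp});\mu_{\omega})$ in the case at hand, where the marking region $U_{\mu}$ is all of $Y$. In that situation the marking isomorphism $\sigma\colon E_{\mu}\to E|_{U_{\mu}}$ is defined over the whole of $Y\setminus K^{\sharp}$, so it pins down the bundle: a $\mu_{\omega}$-marked bifold bundle-with-connection is, up to the stated equivalence, nothing more than a bifold connection $A$ on the \emph{fixed} orbifold $\SO(3)$ bundle $E_{\mu}$ over $(Y,K^{\sharp})$, and two such connections are equivalent exactly when they differ by an automorphism of $E_{\mu}$ that lifts to the determinant-$1$ gauge group. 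So the first step is to identify $\bonf^{\sharp}(K)_{\omega}$ with the quotient of the space of bifold connections on $E_{\mu}$ by the determinant-$1$ gauge group.

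Next I would invoke the trivialization of $E_{\mu}$ over $Y\setminus\omega$ that was built into the construction of $\mu_{\omega}$. Restricting to $Y\setminus(K^{\sharp}\cup\omega)$, this trivialization canonically lifts $E_{\mu}$ to an $\SU(2)$ bundle $\hat E$, and (since the Lie algebras coincide) it lifts each bifold connection $A$ on $E_{\mu}$ to a unique $\SU(2)$ connection $\hat A$ on $\hat E$; the first three bulleted conditions of the lemma then hold tautologically, since $\hat E$ is by construction an $\SU(2)$ bundle whose adjoint connection is the restriction of the given bifold connection on $(Y,K^{\sharp})$. The content of the fourth bullet is a local computation near $\omega$: since $E_{\mu}$ is a genuine $\SO(3)$ bundle over a punctured neighbourhood of each component of $\omega$ and $A$ is smooth across $\omega$, the $\SO(3)$-holonomy of $A$ around a small linking circle tends to the identity; its two $\SU(2)$ lifts are $\pm 1$, and the limit is $-1$ precisely when the trivialization of $E_{\mu}$ --- equivalently the $\SU(2)$ lift $\hat E$ --- fails to extend across that component, which is the defining property of $\omega$ as a cycle dual to $w_{2}(E_{\mu})$.

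For the converse I would start from a pair $(\hat E,\hat A)$ satisfying all four conditions. Its adjoint $\SO(3)$ connection has trivial holonomy around small circles linking $\omega$ (the adjoint representation is trivial on the centre $\{\pm1\}\subset\SU(2)$), so it extends across $\omega$ to an $\SO(3)$ bundle-with-connection on $Y\setminus K^{\sharp}$; the third bullet says this bundle is the restriction of a bifold bundle $E$ on $(Y,K^{\sharp})$, and the $-1$-holonomy condition forces $w_{2}(E)$ to be dual to $\omega$. Since $\SO(3)$ bundles over $Y\setminus K^{\sharp}$ are classified by their second Stiefel--Whitney class, $E$ is isomorphic to $E_{\mu}$ over $Y\setminus K^{\sharp}$; choosing such an isomorphism and lifting it over $Y\setminus(K^{\sharp}\cup\omega)$ to the $\SU(2)$ bundles produces a marking $\sigma$ and hence a point of $\bonf^{\sharp}(K)_{\omega}$. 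Finally one checks that the two constructions are mutually inverse on equivalence classes, using that the determinant-$1$ gauge group of $E_{\mu}$ corresponds exactly to the $\SU(2)$ gauge group of $\hat E$.

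I expect the main obstacle to be the bookkeeping rather than any deep point: one must verify that the dictionary between $\SO(3)$ bifold data over $Y\setminus K^{\sharp}$ and $\SU(2)$ data over $Y\setminus(K^{\sharp}\cup\omega)$ is faithful in both directions --- in particular that ``$\mathrm{ad}\,\hat E$ extends to a bifold bundle over $(Y,K^{\sharp})$'' is the correct encoding of the orbifold structure along the edges and vertices of $K^{\sharp}$, that bifold isomorphisms match $\SU(2)$ gauge transformations, and that the passage from the $-1$-holonomy condition around $\omega$ to the statement about $w_{2}$ is handled correctly. Much of this runs parallel to the corresponding passages in \cite{KM-deformation} and \cite{KM-jsharp}, so the real work is in transcribing those arguments for the present local system.
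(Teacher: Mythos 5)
Your unwinding of the definitions is correct and is exactly what the lemma amounts to: the paper in fact supplies no proof at all, treating the statement as an immediate consequence of the construction of $\mu_{\omega}$ (the marking region being all of $Y$, and the chosen trivialization of $E_{\mu}$ away from $\omega$ providing the $\SU(2)$ lift). The points you flag as bookkeeping --- the limiting-holonomy computation around $\omega$, the converse direction via $w_{2}$, and the matching of the determinant-$1$ gauge group with the $\SU(2)$ gauge group --- are precisely the routine verifications the authors leave implicit.
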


\subsection{The local system}
\label{subsec:local-system}

We begin with some motivation of our construction. 
If $\pi$ denotes the fundamental group of the configuration space
$\bonf^{\sharp}(K)_{\omega}$, then there is a tautological local system over
$\bonf^{\sharp}(K)_{\omega}$ whose fiber at each point is a free rank-1 module
for the group ring $\F_{2}[\pi]$. It can be realized by defining its
fiber at $[A] \in \bonf^{\sharp}(K)_{\omega}$ to be the vector space of
$\F_{2}$-valued functions with finite support on the fiber of the
universal cover $\widetilde{\bonf}^{\sharp}(K)_{\omega} \to \bonf^{\sharp}(K)_{\omega}$. Given any choice of homomorphism
\[
    \epsilon: \pi \to G
\]
there is a corresponding local system $\Gamma_{\epsilon}$ of $\F_{2}[G]$ modules.
 Our instanton homology is $\Z/2$
graded, not $\Z$ graded, because of non-trivial spectral flow, and
there is an infinite cyclic cover of $\bonf^{\sharp}(K)$ on which the
spectral flow is trivial. Let
\[
        \pi' \subset \pi
\]
be the fundamental group of this infinite cyclic cover.
The instanton homology groups are
essentially unchanged in passing to the cover (the homology becomes
$\Z$ graded and 2-periodic, rather than $\Z/2$ graded). Up to
isomorphism, the instanton homology groups with coefficients in the
local system $\Gamma_{\epsilon}$ will therefore depend on
the homomorphism $\epsilon$ only through the restriction of $\epsilon$ to
$\pi'$. 

Although we shall not need a proof, the fundamental group  $\pi$ is a
free abelian group of rank $5$ when $K$ is a knot and $\omega$ is
empty. It follows in this case that
$\pi'$ has rank $4$, and we will therefore capture the most general
local system
if we construct a homomorphism
\[
              \pi \to \Z^{4}
\]
which is injective on the subgroup $\pi' \subset \pi$.

 The construction described in
\cite{KM-deformation} arises from a map
\[
           \pi \to \Z^{3}
\]
presented in terms of an explicit map
\[
         (h_{1}, h_{2}, h_{3}): \bonf^{\sharp}(K)_{\omega} \to S^{1} \times S^{1} \times S^{1},
\]
and this leads to the local system $\Gamma_{\theta}$ of free rank-1
modules over $\F_{2}[\Z^{3}]$ as described at \eqref{eq:Z3-ring} in
the introduction.  To recall this briefly from \cite{KM-deformation}, 
the marking data $\mu_{\omega}$ means that our gauge
theory has structure group $\SU(2)$, and at the two vertices of
$\theta$, the structure group of the two fibers $E_{+}$ and $E_{-}$ 
is reduced to the center $\{\pm 1\}$. 
Along each edge of $\theta$, the structure group is reduced to $S^{1}$. The
holonomy along each edge therefore gives a well defined element of
\begin{equation}\label{eq:pm-thing}
              S^{1} / \{\pm 1\} \cong \R/\Z.
\end{equation}
Applied to three edges of $\theta$ in turn, one obtains the three components $h_{1}$,
$h_{2}$, $h_{3}$. (The notation for $\Gamma_{\theta}$ was
simply $\Gamma$ in
\cite{KM-deformation}).

When $\omega$ has no end-points (so is disjoint from $K$), 
a very similar construction from \cite{KM-s-invariant}, can be adapted to
define a map
\begin{equation}\label{eq:h0-map}
          h_{0} : \bonf^{\sharp}(K)_{\omega} \to S^{1}.
\end{equation}
To describe this, consider first the case that $K$ is a knot. Choose a
framing $\tau$ for $K$ so as to have well-defined push-off. As
explained in \cite{KM-singular}, the framing allows us to interpret the 
orbifold connection $[A]$ as giving rise to a well-defined connection
over the knot $K$ itself, carried by a bundle with an involution $g$
coming from the orbifold structure. Because of the action of $g$, the adjoint bundle 
decomposes as a sum \[ \xi \oplus \eta\] where $\xi$ is a  real line
bundle on $K$, and $\eta$ is a 2-plane bundle. The marking data allows
us to identify the orientation bundle of the knot $K$
with the orientation bundles of both $\xi$ and $\eta$, so the
connection in the 2-plane bundle $\eta$
has a well-defined circle-valued holonomy along $K$. The holonomy of 
$\eta$ around $K$ is the definition of $h_{0}$
above. If $K$ is a link rather than a knot, we multiply the holonomies
along all the components.

Combining the two previous constructions, we now have a map
\begin{equation}\label{eq:four-h-maps}
           (h_{0}, h_{1}, h_{2}, h_{3}) : \bonf^{\sharp}(K)_{\omega} \to \R^{4}/\Z^{4}
\end{equation}
whenever $\omega$ has no boundary points. In the case that $\omega$
has boundary on $K$, the component $h_{0}$ must be omitted.
As in \cite{KM-singular}, we use an explicit description of the
corresponding local system that depends on the maps $h_{i}$ but does
not depend on a choice of base-point in $\bonf^{\sharp}(K)_{\omega}$. We write
\[
                \cR = \F_{2}[ \Z^{4}]
\]
and regard this as a subring of $\F_{2}[\R^{4}]$. For each
$\mu\in\R^{4}$, we have the rank-1 $\cR$-module
\[
\begin{aligned}
    \Gamma|_{\mu} &= T_{0}^{\mu_{0}} T_{1}^{\mu_{1}} T_{2}^{\mu_{2}}
    T_{3}^{\mu_{3}} \cR \\
                 &\subset \F[\R^{4}]
\end{aligned}
\]
and these form a local system over the torus $\R^{4}/\Z^{4}$. Pulling
this back by the map \eqref{eq:four-h-maps}, we obtain our local
system $\Gamma$ over $\bonf^{\sharp}(K)_{\omega}$. 

We summarize these constructions as follows:

\begin{notation} \label{notation:Gamma}
    Let $K \subset Y^{o}$ be a link, let $K^{\sharp} = K \cup \theta$,
    let $\omega \subset Y$ be a 1-manifold without boundary, disjoint
    from the ball containing $\theta$. Let $\bonf^{\sharp}(K)_{\omega}$ be
    the associated space of connections. On $\bonf^{\sharp}(K)_{\omega}$,
    we have a local system $\Gamma$ of (free, rank-1) modules
    over \[\cR=\F_{2}[\Z^{4}].\]
    For any base-change $\s:\cR\to \cS$, there is a corresponding
    system of $\cS$-modules, \[ \Gamma_{\s} = \Gamma\otimes_{\s}\cS.\]
    If $\s(T_{0})=1$, then the map $h_{0} : \bonf^{\sharp}(K)_{\omega}
    \to S^{1}$
    is not required in the construction of the local system, and in
    this case we can form the local system $\Gamma_{\s}$ more
    generally, when $\omega$ is allowed to be a manifold with boundary
    with end-points on $K$.
\end{notation}

\begin{remarks}
    (i) In the case that $\omega$ has boundary and $\s(T_{0})=1$, our
    notation for the local system $\Gamma_{\s}$ involves a slight abuse
    of notation, since we can no longer write it as
    $\Gamma \otimes_{\s} \cS$. (The local system $\Gamma$ is not
    defined in this case.)  It should more properly be defined as
    $\Gamma_{\theta}\otimes_{\bar{\s}} \cS$, where $\bar{\s} :
    \F_{2}[\Z^{3}] \to \cS$ is the map through which $\s$ factors.

    (ii) The definition of $h_{0}$ above makes use of a framing $\tau$ for
    the knot (or for each component of the link). If the framing
    $\tau$ is changed by $1$, then $h_{0}$ changes to $h_{0} +
    1/2$. (See \cite{KM-singular}.) Therefore,  framings whose difference is even give rise to the
    same map $h_{0}$ and identical local systems $\Gamma$.

   (iii) The four maps $h_{i}$ in \eqref{eq:four-h-maps} give a map
    \[
               \phi: \pi = \pi_{1}(\bonf^{\sharp}(K)_{\omega}) \to \Z^{4}
    \]
    whenever $\omega$ has no boundary points.
    We can say a little more about the kernel and image of
   $\phi$. The space $\bonf^{\sharp}(K)_{\omega}$ is connected and can be
   identified as usual with a quotient $\cA / \cG$, of an affine space
   of connections by the action of the gauge group. We can therefore
   identify $\pi$ as $\pi_{0}(\cG)$. When $K$ is a knot, the kernel of the map $\phi$ is
   $\Z$ and 
   consists of the components of $\cG$ represented by gauge
   transformations that are supported in the neighborhood of a point
   in $S^{3}\setminus K$. The image of $\phi$ is a sublattice
   $\Lambda \subset \Z^{4}$ of index $8$. In terms of the standard basis
   $v_{i}$, this lattice is generated by the elements
   \[
               2v_{i},\; \text{($i=0,1,2,3$),} \quad \text {and} \quad
               v_{1} + v_{2} + v_{3}.
   \]
    For example, the fact that the $v_{0}$ coefficient is even is a
    reflection of the fact that the map $h_{0}$ lifts to the
    double cover of $S^{1}$. In turn this lift exists because we can
    use the holonomy of the $\SU(2)$ connection around the loop $K$ to define a map
    $\tilde h_{0}$, rather than use the holonomy of the $\SO(3)$
    connection which defines $h_{0}$. In the same way, the fact that the coefficients of
    $v_{1}$ and $v_{2}$ have the same parity similarly means that
    $h_{1}+h_{2}$ lifts to a double cover, essentially because the
    corresponding pair of edges of $\theta$ form a closed
    loop. Instead of the ring $\F_{2}[\Z^{4}]$, we could instead work
    with the subring $\F_{2}[\Lambda]$, which we can identify as the
    subring generated by the monomials $T_{i}^{\pm 2}$ and
    $T_{1}T_{2}T_{3}$.

     (iv) The previous remark explains that the circle-valued map
     $h_{0}$ has a lift $\tilde h_{0}$, through the double-cover of
     $S^{1}$, and this accounts for a difference in conventions
     between the present paper and (for example) the notation in
     \cite{KM-s-invariant}. In \cite{KM-s-invariant}, the local system
     is defined using the map $\tilde h_{0}$, and is described as a
     module for the ring of finite Laurent series in a formal variable
     $u$. Because of the double cover, the variable $u$ in that paper
     corresponds to $T_{0}^{2}$ in the present paper, rather than
     $T_{0}$. Our present choice of conventions is for consistency
     with \cite{KM-deformation}.

     (v) In the case that $K$ is an $n$-component link and $\omega$
     has no boundary points, the
     fundamental group $\pi$ is free of rank $4+n$ and $\pi'$ is free
     of rank $3+n$. If $\omega$ has boundary, and if $k$ is the number of components of $K$ on which
     $\partial \omega$ has an odd number of points, then the rank of
     $\pi'$ is $3+n-k$ and the torsion subgroup of $\pi'$ is
     $(\Z/2)^{k-1}$. The proof of these assertions are essentially the
     same as the results of section~3.2 of \cite{KM-unknot}.
\end{remarks}

\subsection{The chain complex}

Following Notation~\ref{notation:Gamma} henceforth, we fix a
base-change homomorphism  $\s:\cR\to\cS$, possibly the identity. We fix
a 3-manifold $Y$, a base-point $y_{o}\in Y$,  a link $K \subset Y$
disjoint from a fixed ball around the base-point, and a representative $\omega$
for the Stiefel-Whitney class. If $\s(T_{0})=1$, then we allow $\omega$
to have boundary on $K$.

We can now construct the chain complex and boundary map which will
define a Floer homology group $\Isharp(K;\Gamma_{\s})_{\omega}$ in the
usual way for an instanton Floer homology. While the construction is a
straightforward generalization of the treatment in
\cite{KM-deformation} and its predecessors, it is worthwhile to recall
a particular point from \cite{KM-deformation}, namely the proof that
$\partial^{2}=0$ given in \cite[Lemma 3.1]{KM-deformation}. From there
we see that, \emph{a priori}, there is a relation of the form
\[
         \partial \circ \partial = W \mathbb{1}
\]
for some $W\in \cS$. That is, we may have a ``matrix factorization''
rather than a complex. The proof that $W=0$ carries over from
\cite{KM-deformation} without change: it vanishes because it is a sum
of contributions from the vertices of $\theta$, and is independent of
$K$. Although we will not pursue this further in the present paper, it
is worth observing what happens in a more general situation. Suppose
we consider the case that $K$ is a web rather than a link, and suppose
we use each edge $e$ of $K$ to define a map $h_{e} :
\bonf^{\sharp}(K)\to S^{1}$, so as to obtain a local system of modules
over a ring of Laurent series in variables $T_{e}$ indexed by the
edges. The term $W$ will have contributions from possible bubbling at
the vertices $v$ of $K$, so
\[
         W = \sum_{v} W_{v}.
\]
With a little more care, one may explicitly compute $W_{v}$, and it
has the form
\[
            W_{v} = p( T_{e(v,1)},  T_{e(v,2)},  T_{e(v,3)})
\]
where $e(v,i)$ are the three edges incident at $v$ and 
\[
         p(T_{1}, T_{2}, T_{3}) = T_{1}T_{2}T_{3} + T_{1}T_{2}^{-1}T_{3}^{-1} +
        T_{2}T_{3}^{-1}T_{1}^{-1} + T_{3}T_{1}^{-1}T_{2}^{-1}
\]
is the same polynomial that defines $P$. (Our notation here as
elsewhere will sometimes not distinguish a generator $T_{i}\in \cR$
from its image under the base-change, $\s(T_{i})\in \cS$.) In this generality, the
potential $W$ is non-zero. It becomes zero if we impose relations on
the variables $T_{e}$ so as to ensure (for example) that the product
\[
 T_{e(v,1)} T_{e(v,2)} T_{e(v,3)}
\]
is independent of the vertex $v$.

\subsection{Functoriality}
\label{subsec:functoriality}

Let $X$ be an oriented 4-dimensional cobordism from $Y_{0}$ to $Y_{1}$
and let $S\subset X$ be a surface (not necessarily orientable) 
which provides a cobordism between links $K_{0}\subset Y_{0}$ and
$K_{1}\subset Y_{1}$. 
Because of the need for a
basepoint, we suppose that $X$ contains an embedded cylinder
$[0,1]\times B^{3}$ whose boundary at the two ends are the balls
$B(y_{0})$ and $B(y_{1})$ around the chosen basepoints. We suppose
that $S$ is disjoint from this cylinder, so we may form the larger
foam
\[
\begin{aligned}
    S^{\sharp} &= S \; \cup \; \bigl( [0,1]\times \theta \bigr)\\ & \subset
    X.
\end{aligned}
\]
The foam $S^{\sharp} \subset X$ provides an orbifold structure on $X$,
which we write as $(X,S^{\sharp})$. 

The oriented orbifold $(X,S^{\sharp})$ is a cobordism between the
orbifolds $(Y_{i}, K_{i}^{\sharp})$.  As a special case of the general
machinery of \cite{KM-jsharp} and \cite{KM-unknot}, it defines homomorphisms on the
constant-coefficient instanton homology groups
\[
          \Isharp(X,S) : \Isharp( Y_{0}, K_{0}) \to \Isharp(Y_{1}, K_{1}).
\]

More generally, we can again allow bundles with non-zero $w_{2}$ represented
as the dual of a submanifold $\omega$. As in \cite{KM-unknot}, we take
$\omega$ to be a surface with corners. Thus, the boundary of $\omega$
consists of:
\begin{itemize}
\item a 1-manifold $\omega_{0}\subset Y_{0}$, possibly with boundary
    on $K_{0}$;
\item a 1-manifold $\omega_{1}\subset Y_{1}$ similarly;  and
\item a union of arcs and circles in the surface $S\subset Y$.
\end{itemize}
As well as meeting $S$ along its boundary, we allow $\omega$ to meet
$S$ also in its interior, in transverse points of intersection
\cite{KM-unknot}. We then have the more general functoriality, with
maps
\[
          \Isharp(X,S)_{\omega} : \Isharp( Y_{0}, K_{0})_{\omega_{0}} 
\to \Isharp(Y_{1}, K_{1})_{\omega_{1}}.
\]

We can now introduce the local system $\Gamma_{\s}=
\Gamma\otimes_{\s}\cS$. 
If $\partial \omega$ meets $S$, then we require that $\s(T_{0})=1$, as
in Notation~\ref{notation:Gamma}.
When the local system  is introduced and $\s(T_{0})\ne 1$ we must also
take additional care,
because of the role of the framings. Recall that the map $h_{0} :
\bonf^{\sharp}(K) \to S^{1}$ depends on a choice of framing of $K$,
and otherwise has an ambiguity of a half-period. Framings which have different
parities give rise to groups $\Isharp(K ; \Gamma_{\s})_{\omega}$ that are
isomorphic, but not canonically so without further choices. This issue
is dealt with carefully in \cite{KM-singular} (and with some inessential
inaccuracies in \cite{KM-s-invariant}). We recall the procedure.

Let us recall first that the construction of $\Isharp(K;\Gamma_{\s})_{\omega}$
depends on framing of $K$, and that the map that we are seeking to
define should therefore be written
\begin{equation}\label{eq:Isharp-functor}
     \Isharp(X,S ; \Gamma_{\s})_{\omega} : \Isharp( Y_{0}, K_{0}; \Gamma_{\s})^{\tau_{0}}_{\omega_{0}} \to \Isharp(Y_{1}, K_{1}; \Gamma_{\s})^{\tau_{1}}_{\omega_{1}}
\end{equation}
where we have now included the framings of $K_{0}$ and $K_{1}$
explicitly in the notation.
At the chain level, the map will be given by a chain map
\[
       C^{\sharp}(S; \Gamma_{\s})_{\omega} :  C^{\sharp}( Y_{0}, K_{0}; \Gamma_{\s})^{\tau_{0}}_{\omega_{0}} \to C^{\sharp}(Y_{1}, K_{1}; \Gamma_{\s})^{\tau_{1}}_{\omega_{1}}
\]
whose matrix entry from $\alpha_{0}$ to $\alpha_{1}$ is
given by ``counting instantons'' as usual,  and attaching a ``weight''
$\epsilon([A]) \in \cR$ to each instanton $[A]$ from $\alpha_{0}$ to $\alpha_{1}$.

To define $\epsilon([A])$, following \cite{KM-singular}, we note that the framings of $K_{0}$ and $K_{1}$ allow one to define a
self-intersection number $S\cdot S$ for the surface $S$. 
Given the instanton $[A]$ on the cylindrical-end
orbifold obtained from $(X,S)$, one may define locally an $\SO(3)$
bundle on $S$ with a reducible connection, so that the associated
$\R^{3}$ bundle has locally the form $\xi\oplus \eta$ (a line bundle
and a 2-plane bundle). Although the construction
is local, the curvature $2$-form of the 2-plane bundle $\eta$ exists
globally on $S$ as a 2-form $\Omega$ with values in the orientation
bundle of $S$. So we may consider the integral
\[
         \nu_{0}(A) = c \int_{S} \Omega,
\] 
with the normalizing constant $c$ chosen so that $\nu_{0}$ coincides with
the Euler class of $\eta$ in the closed orientable case. (So $c= i/2\pi$
if we identify the Lie algebra of the circle with $i\R$ in the usual
way.)  An application of
Stokes theorem gives
 \begin{equation}\label{eq:nu-stokes}
               \nu_{0}(A) + (1/2)(S\cdot S) = h_{0}(\alpha_{1}) -  
             h_{0}(\alpha_{0})\pmod \Z.
\end{equation}
We may define similar quantities $\nu_{i}(A)$, ($i=1,2,3$), as the integrals
of the curvature on the three edges of $\theta$.
The weight $\epsilon([A])$ is now defined by
\begin{equation}\label{eq:T-cobordism}
        \epsilon([A]) = T_{0}^{\nu_{0}(A) + (1/2)(S\cdot S)} T_{1}^{\nu_{1}(A)}
 T_{2}^{\nu_{2}(A)}  T_{3}^{\nu_{3}(A)}
\end{equation}
These relation \eqref{eq:nu-stokes}, and the simpler formulae for the
other $\nu_{i}$, mean that multiplication by \eqref{eq:T-cobordism} is
a map from the fiber $\Gamma_{s,\alpha_{0}}$ to $\Gamma_{s,\alpha_{1}}$ as
required. The formula for the exponent of $T_{0}$ is the same as in
\cite{KM-singular}, except for a factor of $2$ which stems from the
difference between $\SU(2)$ and $\SO(3)$, as explained in Remark (ii)
at the end of section~\ref{subsec:local-system} above.

The result of this construction is a well-defined map \eqref{eq:Isharp-functor}
between instanton homology groups. As a special
case, we may take $K_{0}=K_{1}=K$ and use the cylindrical cobordism to
obtain canonical isomorphisms
\[
 \Isharp( Y, K; \Gamma_{\s})^{\tau_{0}}_{\omega} \to \Isharp(Y, K; \Gamma_{\s})^{\tau_{1}}_{\omega}
\]
where only the framing has changed. We use these canonical
isomorphisms to treat $\Isharp(Y, K; \Gamma_{\s})^{\tau}_{\omega}$ as being
independent of the choice of framing $\tau$. Note however, that if
$\tau_{0}$ and $\tau_{1}$ are framings which are equal mod $2$, then
the corresponding local systems are identical, but our chosen
canonical isomorphism is \emph{not} the identity map: it is
multiplication by $T_{0}^{n}$, where $n$ is half the difference
between the framings.

Henceforth we will continue to omit $\tau$ from our notation. When the
ambient cobordism $X$ is a cylinder, or is otherwise understood, we
will simply write
\begin{equation}\label{eq:functorial}
     \Isharp(S ; \Gamma_{\s})_{\omega} : \Isharp(  K_{0}; \Gamma_{\s})_{\omega_{0}} \to \Isharp( K_{1}; \Gamma_{\s})_{\omega}
\end{equation}
for the map \eqref{eq:Isharp-functor}.

\subsection{A reduced variant}\label{subsec:reduced}

The instanton homology $\Isharp(K)$ with constant coefficients has a
``reduced'' version $\Inat(K)$, which is described, for example, in
\cite{KM-unknot}. It depends on a choice of base-point on the link
$K$, and the relationship of $\Inat$ to $\Isharp$ is similar to the
relationship between the reduced and unreduced versions of Khovanov
homology \cite{Khovanov}. Given a base change $\s: \cR\to\cS$
satisfying the extra condition $\s(T_{0})=\s(T_{1})$, we can
construct a reduced version $\Inat(Y,K;\Gamma_{\s})_{\omega}$ of
$\Isharp(Y,K;\Gamma_{\s})_{\omega}$, for knots and links  $K\subset
Y$. We describe the construction here.

\begin{figure}
    \begin{center}
        \includegraphics[scale=0.3]{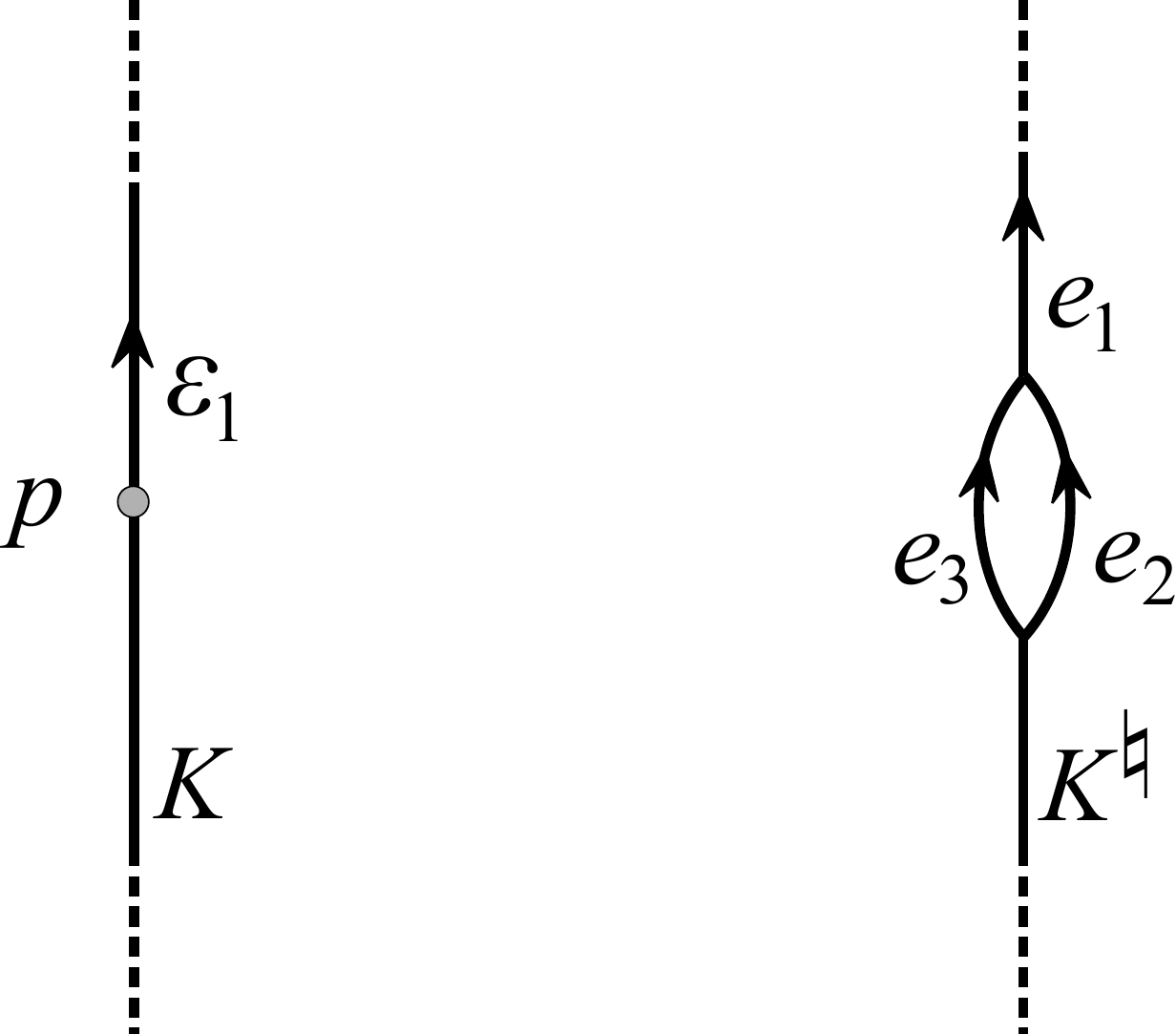}
    \end{center}
    \caption{\label{fig:reduced}
    A knot $K$ with base-point $p$, and the resulting web
    $K^{\natural}$ obtained by
    adding a bigon.}
\end{figure}

Let $K\subset Y$ be a link. Let $p$ be the base-point on $K$, and $(\epsilon_{1}, \epsilon_{2},
\epsilon_{3})$ be an oriented basis of tangent vectors, with
$\epsilon_{1}$ pointing along $K$. Making the modification in a
standard ball around $p$, create a spatial graph with two
vertices by replacing an arc of $K$ adjacent to $p$ with a bigon, as
shown in Figure~\ref{fig:reduced}. Let $K^{\natural}\subset Y$ denote the
resulting web. Let
$\bonf^{\natural}(Y, K)$, or just $\bonf^{\natural}(K)$, denote the
space of marked bifold connections on the corresponding bifold.

We define three circle-valued functions,
\[
      (h_{1}, h_{2}, h_{3}) : \bonf^{\natural}(K) \to \R^{3}/\Z^{3},
\]
as follows. First, in the case that $K$ is knot, the web
$K^{\natural}$ is the union of three oriented arcs $e_{1}$, $e_{2}$,
$e_{3}$, where $e_{2}$ and $e_{3}$ comprise the added bigon. All three
are oriented by $\epsilon_{1}$. As before structure group of a
connection $[A]\in \bonf^{\natural}(K)$ reduces to $S^{1}$ along the
arcs, and the holonomy of $[A]$ along the three arcs defines the maps
$h_{i}$, just as in the case of $\bonf^{\sharp}(K)$. If $K$ is a link,
let 
\[
         h_{0} : \bonf^{\natural}(K) \to \R/\Z
\]
be obtained from the holonomy along the remaining components of $K$
(those that do not contain $p$). In the case of a knot, just take
$h_{0}$ to be constant. In either case, we now have a map
\[
           (h_{0},h_{1}, h_{2}, h_{3}) \to \R^{4}/\Z^{4},
\]
from which we can construct a local system of $\cR$-modules $\Gamma$ over
$\bonf^{\natural}(K)$ as before. In the case that the base-change $\s:
\cR\to\cS$ has $\s(T_{0}) = \s(T_{1})$, the local system is pulled
back from $\R^{3}/ \Z^{3}$ via the map
\[
        \R^{4}/\Z^{4} \to \R^{3}/\Z^{3}
\]
which adds the first two components. We shall consider only cases such
as this when discussing reduced instanton homology in this paper, in
order to have the components of $K$ on an equal footing. We then
define $\Inat(K; \Gamma_{\s})$ using the Morse homology of the
peturbed Chern-Simons functional on $\bonf^{\natural}(K)$, with
coefficients in $\Gamma_{\s}$.

This reduced instanton homology is functorial for ``based'' cobordisms
of links. Given links $(Y_{0}, K_{0})$ and $(Y_{1}, K_{1})$, with
framed base-points $p_{0}$ and $p_{1}$ on the links, the appropriate
morphism is given by a
cobordism of pairs, $(X,S)$ together with an arc $\gamma\subset S$
joining the base-points and a framing $(\epsilon_{1}, \epsilon_{2},
\epsilon_{3})$ of the normal to $\gamma$ in $X$ such that
$\epsilon_{1}$ is tangent to $S$. Equivalently, we can think of an
embedding of $[0,1]\times B^{3}$ in $X$ which intersects $S$ in the
image of the standard $[0,1]\times B^{1}$. Given such data, we can
perform the bigon addition (Figure~\ref{fig:reduced}) in a
one-parameter family along the image of $[0,1]\times B^{3}$, to obtain
an embedded foam $S^{\natural}$ with boundary $K^{\natural}_{0} \cup
K^{\natural}_{1}$. The foam gives rise to homomorphisms
\[
    \Inat(X,S; \Gamma_{\s}) : \Inat(Y_{0}, K_{0}; \Gamma_{\s}) \to
     \Inat(Y_{1}, K_{1}; \Gamma_{\s}) 
 \]
where the matrix entries at the chain level are given by the same
formulae \eqref{eq:T-cobordism} as in the non-reduced case, with the
$\nu_{i}$ being the curvature integrals over the facets of $S^{\natural}$.

\subsection{The K\"unneth theorem for reduced homology}

Given links $K_{1}$ and $K_{2}$, each with a framed basepoint, there
is a natural construction of the connected sum $K_{1}\csum K_{2}$,
also as a link with framed base-point. To spell this out, let
$(\epsilon_{1}, \epsilon_{2}, \epsilon_{3})$ be the framing at the
base-point of $K_{1}$, with $\epsilon_{1}$ pointing along the
knot. Using the framing, parameterize a standard ball $B_{3,1}$ around
the base-point. Construct $B_{3,2}$ similarly. Remove the interiors
and form the connected sum by identifying the $2$-sphere $\partial B_{3,1}$ with
$\partial B_{3,2}$ using the orientation-reversing diffeomorphism
given by reflection in the $\epsilon_{1}$ direction. Take the
base-point on the new link to be the image of the point $\epsilon_{1}$
on $\partial B_{3,1}$.

The construction of $K_{1}\csum K_{2}$ from the two framed knots is
functorial. That is, given cobordisms $S_{i}$ from $K_{i}'$ to $K_{i}$
for $i=1,2$, and given framed arcs $\gamma=(\gamma_{1}, \gamma_{2})$
joining the framed basepoints, we can form a cobordism
\[
             S_{1} \csum_{\gamma} S_{2}
\]    
from $K'_{1}\csum K_{2}'$ to $K_{1}\csum K_{2}$ by performing the
connected-sum construction in an interval family. The reduced
instanton homology for a connected sum of framed knots is described as
a tensor product by a K\"unneth theorem:

\begin{proposition}\label{prop:Kunneth-red}
    Let $(C_{1}, \partial)$ and $(C_{2},\partial)$ be the differential
    $\cS$-modules arising from the Floer complexes for the
    homology groups $\Inat(K_{1};\Gamma_{\s})$ and
    $\Inat(K_{2};\Gamma_{\s})$. Then the Floer complex for
    $K_{1}\csum K_{2}$ is chain-homotopy equivalent to the tensor
    product $C_{1}\otimes_{\cS} C_{2}$. In particular, if\/
    $\cS$ is a principal ideal domain, then there is a split
    exact sequence of\/ $\cS$-modules,
    \begin{equation}\label{eq:Kunneth}
                 \begin{aligned}                     
                  0 \longrightarrow \Inat(K_{1};\Gamma_{\s})
                  \otimes_{\cS}
                  \Inat(K_{2};\Gamma_{\s}) \longrightarrow
                  \null &\Inat(K_{1}\csum K_{2};\Gamma_{\s})\\
                  &\longrightarrow
                  \mathrm{Tor}^{\cS}_{1} \bigl(\Inat(K_{1};\Gamma_{\s}),
                  \Inat(K_{2};\Gamma_{\s})\bigr) \longrightarrow 0.
                  \end{aligned}
              \end{equation}
     The exact sequence, but not the splitting, is natural with
     respect to the maps induced by cobordisms $S_{1}$, $S_{2}$ and
     $S_{1}\csum_{\gamma}S_{2}$ as constructed above.         
\end{proposition}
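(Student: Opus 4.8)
The plan is to establish the chain-homotopy equivalence by the standard gluing/excision argument for instanton homology applied to the connected-sum region, and then to extract the K\"unneth exact sequence by pure homological algebra. First I would set up the neck-stretching picture: the connected sum $K_{1}\csum K_{2}$ sits inside $Y_{1}\csum Y_{2}$, and the reducing sphere $S^{2}$ (the common boundary $\partial B_{3,1}=\partial B_{3,2}$) meets $K_{1}\csum K_{2}$ in exactly two points, since $\epsilon_{1}$ is transverse to the reflecting sphere. After the bigon modification near each base-point, this sphere meets $K^{\natural}_{1}\csum K^{\natural}_{2}$ along the arcs $e_{i}$ in a controlled way; the key geometric input is that the reduced configuration space for the connected sum, when the neck is long, decomposes as a fibered product over the space of flat connections on the two-punctured $S^{2}$ (equivalently, over the representation variety of the relevant orbifold $2$-sphere). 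Because the isotropy data along the two puncture points is that of the reduced theory, this representation variety is a single point with trivial stabilizer — this is precisely the mechanism that makes $\Inat$ behave multiplicatively, exactly as in the constant-coefficient case treated in \cite{KM-unknot}.

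Next I would promote this to a statement about chain complexes. The excision/gluing theorem for instanton homology (the version with local coefficients, as developed for $\Gamma_{\s}$ in Section~\ref{subsec:functoriality}) gives a chain-homotopy equivalence between the Floer complex of $K_{1}\csum K_{2}$ and the tensor product $C_{1}\otimes_{\cS} C_{2}$ of the two Floer complexes. Here the hypothesis $\s(T_{0})=\s(T_{1})$ enters in an essential way: it is what makes the local system on $\bonf^{\natural}$ well defined (it is pulled back from $\R^{3}/\Z^{3}$, as explained in Section~\ref{subsec:reduced}), and it is also what guarantees that the weights $\epsilon([A])$ assigned to broken trajectories across the neck \emph{multiply} correctly — a trajectory on the connected sum that limits to a pair of trajectories, one on each side, carries a weight that is the product of the two weights, because the curvature integrals $\nu_{i}$ over the facets are additive under the gluing. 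This is the content needed to see that the gluing map is not merely a chain equivalence of $\cS$-modules but is compatible with the tensor-product differential $\partial\otimes 1 + 1\otimes\partial$. I would also record the functoriality claim at this stage: the gluing construction $S_{1}\csum_{\gamma} S_{2}$ is performed in an interval family, so the induced chain map is chain-homotopic to $C(S_{1})\otimes_{\cS} C(S_{2})$, which gives naturality of the exact sequence (though not of the splitting, since the algebraic K\"unneth splitting is not natural).

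Finally, the passage from the chain-level tensor product statement to the exact sequence \eqref{eq:Kunneth} is the algebraic K\"unneth theorem over a principal ideal domain $\cS$: if $C_{1}$ and $C_{2}$ are complexes of $\cS$-modules (and one may assume them to be complexes of free or projective modules, since the Floer complexes are finitely generated free $\cS$-modules generated by critical points), then $H_{*}(C_{1}\otimes_{\cS} C_{2})$ sits in the standard split short exact sequence with the $\mathrm{Tor}^{\cS}_{1}$ term, and this splitting and exactness transport through the chain-homotopy equivalence established above. The main obstacle is the middle step: verifying rigorously that the neck-stretching degeneration of reduced instanton moduli spaces over the bifold with local coefficients yields the tensor-product complex \emph{on the nose up to chain homotopy}, including the multiplicativity of the $\cR$-weights. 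This requires the usual care with gluing parameters, orientations (trivial here since $\F_{2}$-coefficients), and the absence of reducibles along the neck — all of which go through as in \cite{KM-unknot} and \cite{KM-deformation} precisely because the reduced marking data kills the stabilizers, but the bookkeeping for the weights $T_{i}^{\nu_{i}}$ is the genuinely new ingredient and is where the hypothesis $\s(T_{0})=\s(T_{1})$ must be used.
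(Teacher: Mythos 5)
Your overall architecture --- a geometric argument giving a chain-homotopy equivalence between the Floer complex of $K_{1}\csum K_{2}$ and $C_{1}\otimes_{\cS}C_{2}$, followed by the algebraic K\"unneth theorem over a PID --- matches the paper, and the final algebraic step and the naturality discussion are fine. The gap is in the geometric step. You propose to stretch the neck along the reducing $2$-sphere of the connected sum, which meets $K_{1}\csum K_{2}$ in exactly two points, and you assert that the representation variety of this twice-punctured orbifold sphere is a single point with trivial stabilizer. That is false: a flat bifold connection on a $2$-sphere meeting the link in two points has holonomy a single rotation by $\pi$ around both punctures, so its image is $\Z/2$ and its stabilizer is $O(2)\subset\SO(3)$. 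The presence of this reducible is exactly the classical obstruction to a naive connected-sum/neck-stretching argument in instanton Floer theory, and adding the bigon at the base-point of $K_{1}\csum K_{2}$ does not by itself change the number of intersection points of the web with the separating sphere. So the ``key geometric input'' as you state it does not hold, and the multiplicativity of $\Inat$ does not follow from the argument as written.

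What the paper does instead is Floer's excision along spheres meeting the web in \emph{three} points, where the representation variety genuinely is a single fully irreducible point. Concretely, it proves the more symmetric four-factor statement $\Cnat_{12}\otimes_{\cS}\Cnat_{34}\simeq\Cnat_{13}\otimes_{\cS}\Cnat_{24}$ by building an excision cobordism $(W,\Phi)$ from three copies of a surface $U$ times $S^{2}$ --- a cobordism between two copies of $(S^{3},\theta)\cup(S^{3},\theta)$ --- and summing $[0,1]\times(Y_{i},K_{i})$ along four arcs on one facet of $\Phi$. The chain maps in both directions come from this cobordism and its reverse, their composites are chain-homotopic to the identity by the usual argument, and the tensor product appears because each side of the excision is a disjoint union of two pairs. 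The proposition is the specialization in which $K_{3}$ and $K_{4}$ are unknots, so $\Cnat_{34}=\cS$. To salvage your approach you would need to replace the two-point sphere by such a three-point excision surface (this is where the theta/bigon structure is actually used), rather than stretch along the connect-sum sphere. A smaller point: the hypothesis $\s(T_{0})=\s(T_{1})$ is needed so that the local system on $\bonf^{\natural}$ is defined and treats the components symmetrically; the multiplicativity of the weights $\epsilon([A])$ under gluing is just additivity of the curvature integrals and does not depend on that hypothesis.
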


\begin{figure}
    \begin{center}
        \includegraphics[scale=0.5]{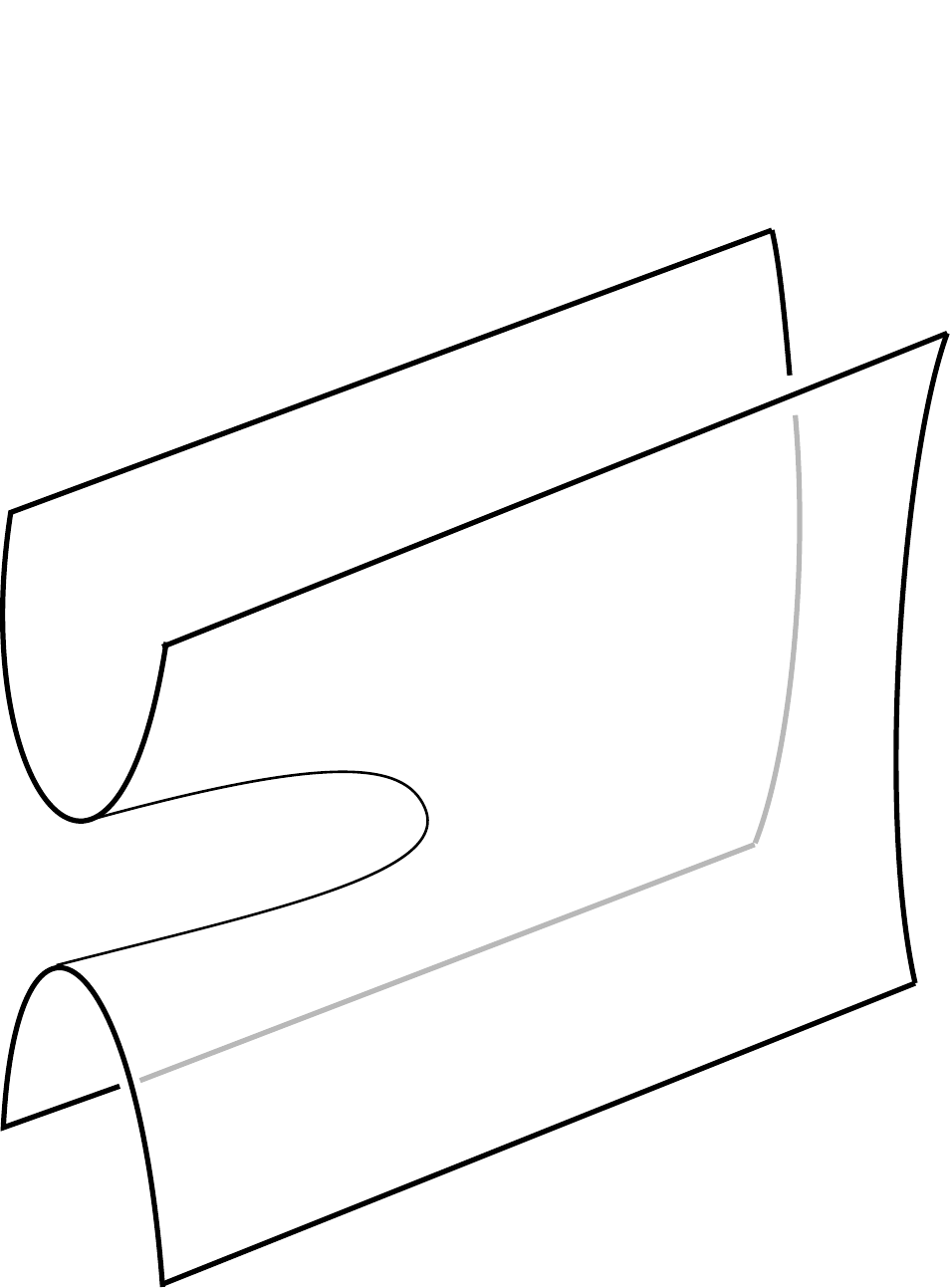}
    \end{center}
    \caption{\label{fig:Excision}
    The cobordism $U$ from two copies of an interval $I$ to another two.}
\end{figure}

\begin{proof}
    This a standard application of excision, as we now describe. The
    symmetries in the argument are more apparent in a more general
    version, so we consider four pairs $(Y_{i}, K_{i})$,
    $k=1,\dots,4$, where each $K_{i}$ is a based link. For each $i\ne
    j$, there is a connect-sum of pairs,
    \[
        (Y_{ij},K_{ij}) =   (Y_{i} , K_{i}) \csum (Y_{j}, K_{j}),
      \]
    where the 3-manifolds and the links are both summed at the
    base-points. Let $\Cnat_{ij}$ denote the chain group of free
    $\cS$-modules arising as the instanton Floer complex for this
    connected sum of based pairs, with coefficients in the local
    system $\Gamma_{\s}$. The more general statement is then that
    there is a chain-homotopy equivalence,
    \begin{equation}\label{eq:Cnat-1234}
              \Cnat_{12} \otimes_{\cS} \Cnat_{34} \simeq \Cnat_{13}
              \otimes_{\cS} \Cnat_{24},
          \end{equation}
    and that the resulting maps on homology are natural for
    cobordisms. The statement of the original proposition arises as a
    special case, when each $Y_{i}$ is $S^{3}$, and $K_{3}$ and
    $K_{4}$ are both the unknot, so that $(Y_{13}, K_{13})=(Y_{1}, K_{1})$,
     $(Y_{24}, K_{24})=(Y_{2}, K_{2})$,
     and $\Cnat_{34}=\cS$.

     We now recall Floer's excision argument, particularly in the
     versions described in \cite[Proposition~4.2]{KM-jsharp} and
     \cite[Proposition~3.3]{KM-deformation}. Let $U$ be as in
     Figure~\ref{fig:Excision}, a 2-dimensional cobordism from the
     $1$-dimensional manifold-with-boundary $I\cup
     I$ to $I\cup I$. Take the product with $S^{2}$ to obtain a
     cobordism from $I\times S^{2}\cup I\times S^{2}$ to $I\times
     S^{2} \cup I\times S^{2}$. Then attach four copies of
     $[0,1]\times B^{3}$ to obtain a cobordism $W$ from $S^{3}\cup S^{3}$
     to $S^{3}\cup S^{3}$. Inside $W$ there is an embedded foam,
     $\Phi$, formed from three copies of $U$. The pair $(W,\Phi)$ is a
     cobordism
     \[
         (S^{3},\theta) \cup (S^{3},\theta)\;\; \text{to}\;\;
          (S^{3},\theta) \cup (S^{3},\theta).
      \]
     On one facet of the three facets of $\Phi$, let $\gamma_{i}$,
     $i=1,\dots,4$, be four arcs as shown in
     Figure~\ref{fig:Phi-arcs}. A regular neighborhood of $\gamma_{i}$
     in $(W,\Phi)$ is a standard pair $[0,1]\times (B^{3}, B^{1})$
     along which we form a sum with $[0,1]\times (Y_{i}, K_{i})$. The
     result is a cobordism of pairs, $(X,\Psi)$ from
     \[
         (Y_{12}, K_{12}^{\natural}) \cup
         (Y_{34}, K_{34}^{\natural}) \;\; \text{to}\;\;
         (Y_{13}, K_{13}^{\natural}) \cup
         (Y_{24}, K_{24}^{\natural}).         
     \]
     As in the proof of  \cite[Proposition~3.3]{KM-deformation}, this
     cobordism of pairs gives rise to a map on the instanton chain
     complexes with local coefficients, in this case a chain map
     \[
              \Cnat_{12} \otimes_{\cS} \Cnat_{34} \to \Cnat_{13}
              \otimes_{\cS} \Cnat_{24}.
     \]
     By the same construction, a map in the other direction is
     constructed. The fact that the composite of the two, in either
     order, is chain-homotopic to the identity is proved by the usual
     argument, as in \cite{KM-deformation} for example.      
\end{proof}

\begin{figure}
    \begin{center}
        \includegraphics[scale=0.5]{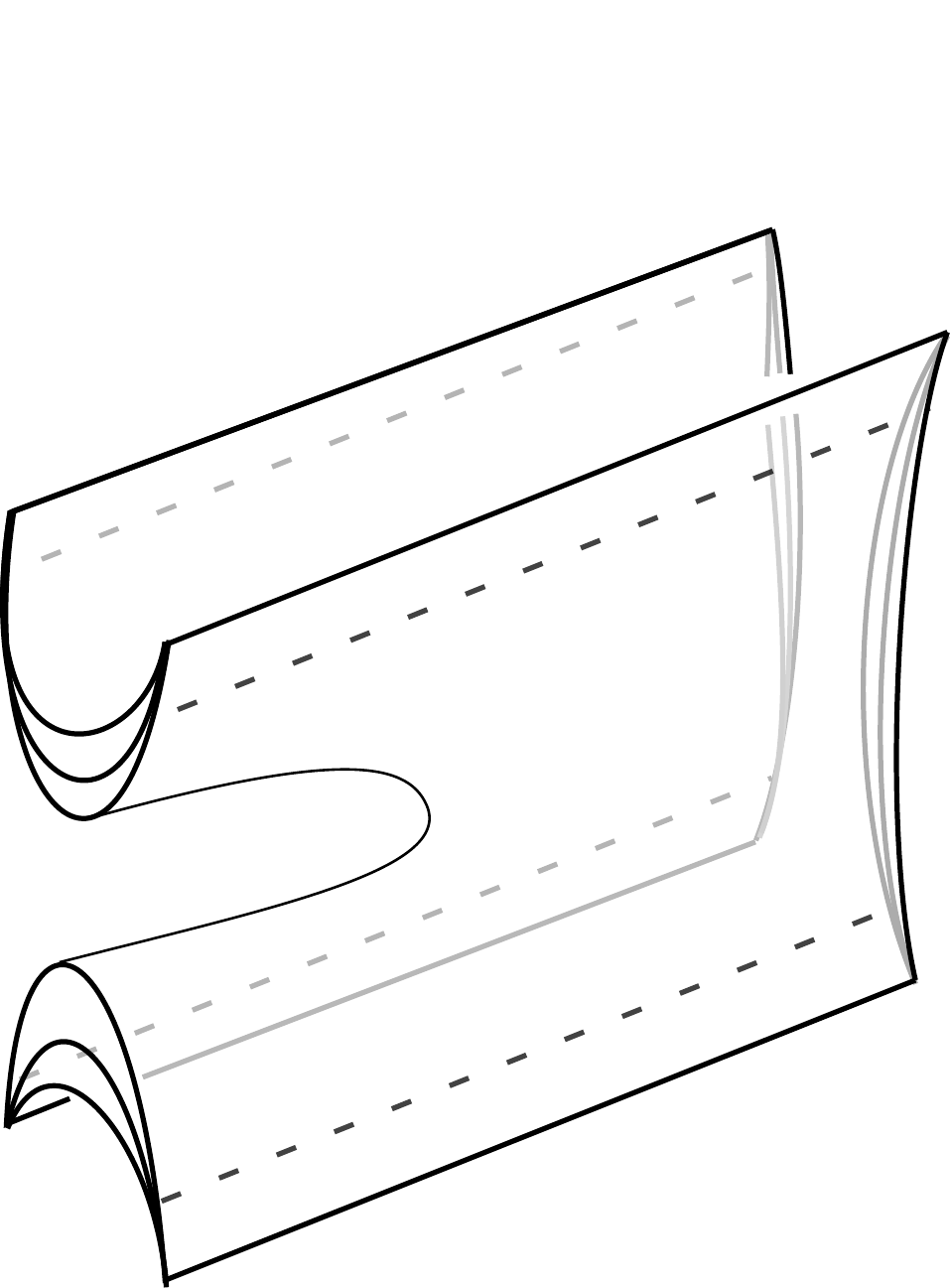}
    \end{center}
    \caption{\label{fig:Phi-arcs}
    The foam $\Phi\subset W$ and the four arcs along which the pairs
    $[0,1]\times (Y_{i},K_{i})$ are summed.}
\end{figure}

\section{Operators on  \texorpdfstring{$\Isharp(K;\Gamma)$}{I(K;Gamma)} }

We continue with the notation of the previous sections. We write
$Y^{o}\subset Y$ for the complement of a ball around a basepoint in
$Y$, and we consider a link $K\subset Y^{o}$, along with the union $K^{\sharp} =
K\cup \theta$ in $Y$. The space of connections $
\bonf^{\sharp}(K)_{\omega}$ carries a system of local coefficients
$\Gamma_{\s}$,
as in Notation~\ref{notation:Gamma}), 
and
$\Isharp(K ; \Gamma_{\s})_{\omega}$ is the instanton homology for the perturbed
Chern-Simons functional on $\bonf^{\sharp}(K)_{\omega}$, with coefficients in 
the local system.

\subsection{Operators from characteristic classes of the basepoint bundle.} 

Given an point $y$ in the smooth part of the orbifold $(Y, K)$,
there is a basepoint $\SO(3)$-bundle $\mathbb{E}_{y}$ on the
configuration space, with Stiefel Whitney classes \[ \mathsf{w}_{1} ,
\mathsf{w}_{2}, \mathsf{w}_{3} \in H^{*}(\bonf^{\sharp}(K)_{\omega}; \Z/2).\] (See \cite[section
4]{KM-deformation}.) For the instanton homology $\Isharp(K ;\Gamma_{\s})$, these characteristic
classes give rise to linear operators,
\[
         w_{i} : \Isharp(K ; \Gamma_{\s})_{\omega} \to  \Isharp(K ; \Gamma_{\s})_{\omega}.
\]
The definition of these for the similar case of $\Jsharp(K ;
\Gamma_{\theta})$ is presented in
\cite{KM-deformation} and needs essentially no change. As in \cite{KM-deformation}, we
have: 

\begin{lemma}\label{lemma:w-relations}
    On $\Isharp(K;\Gamma_{\s})_{\omega}$ the operators $w_{1}$ and $w_{3}$ are zero,
    while $w_{2}$ is multiplication by $\s(P)\in \cS$, where $P\in \cR$ is the
    element given by the expression in \eqref{eq:PQ-formulae}. \qed
\end{lemma}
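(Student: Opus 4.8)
The plan is to follow the strategy of \cite{KM-deformation} essentially verbatim, since the operators $w_i$ and the basepoint $\SO(3)$-bundle $\mathbb{E}_y$ are constructed in exactly the same way. The key observation is that, because the marking data here is all of $Y$, the underlying gauge theory is essentially an $\SU(2)$ theory (Lemma~\ref{lemma:bonf}): the basepoint bundle $\mathbb{E}_y$ at a smooth point $y$ is the adjoint bundle of an $\SU(2)$ basepoint bundle $\widehat{\mathbb{E}}_y$. For such an adjoint bundle, $\mathsf{w}_1$ and $\mathsf{w}_3$ vanish identically (an $\SO(3)$ bundle lifting to $\SU(2)$ has $w_1=0$, and $w_3 = \beta w_2 \cdot$-type considerations or directly $w_3(\mathrm{ad}) = 0$ for a lift), so the corresponding operators $w_1$ and $w_3$ on $\Isharp(K;\Gamma_\s)_\omega$ are zero. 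This part is formal and carries over with no change from \cite[section 4]{KM-deformation}.

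The substance of the lemma is the identification of $w_2$ with multiplication by $\s(P)$. First I would reduce to the case $\s = \mathrm{id}$, $\cS = \cR$, since the general statement follows by applying the base-change functor and the naturality of the operators under coefficient change. The operator $w_2$ is, by definition, the chain-level endomorphism given by cutting down moduli spaces by the divisor dual to $\mathsf{w}_2(\mathbb{E}_y)$ as $y$ varies over a suitable $2$-cycle (or, in the Floer-theoretic packaging, evaluating $\mathsf{w}_2$ of the universal bundle on a relative $2$-class); the claim is that this class, evaluated appropriately, equals $P$ in $\cR = \F_2[\Z^4]$, acting by multiplication on the local system. The natural route is to compute $\mathsf{w}_2(\mathbb{E}_y) \in H^2(\bonf^\sharp(K)_\omega;\Z/2)$ and pair it against the relevant classes, but the cleaner approach used in \cite{KM-deformation} is to move the basepoint $y$ into the ball $B(y_0)$ near the $\theta$-graph, where the $\SU(2)$ bundle $\widehat{\mathbb{E}}_y$ can be compared with the bundles $E_\pm$ at the two vertices of $\theta$ and with the $S^1$-reductions along the three edges. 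There the class $\mathsf{w}_2$ becomes expressible in terms of the holonomy maps $h_1, h_2, h_3$ entering the definition of $\Gamma$, and the combinatorics of how the center $\{\pm1\}$ reductions at the two vertices interact with the three edge-holonomies produces exactly the four monomials $T_1T_2T_3 + T_1T_2^{-1}T_3^{-1} + T_2T_3^{-1}T_1^{-1} + T_3T_1^{-1}T_2^{-1}$ defining $P$.

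More concretely, the computation should mirror the $\partial^2 = 0$ discussion recalled in section~\ref{subsec:local-system}: there the potential $W_v$ at a trivalent vertex was computed to be the polynomial $p(T_{e(v,1)}, T_{e(v,2)}, T_{e(v,3)})$, and the operator $w_2$ localized near the $\theta$-graph picks up precisely the sum of such contributions from the two vertices of $\theta$, which (since $\theta$ has two vertices sharing all three edges) gives $2$ copies in characteristic $2$ — so one must be careful that what survives is a single copy of $p(T_1,T_2,T_3) = P$ rather than zero. The resolution of this parity subtlety is where I would be most careful: the two vertices of $\theta$ carry the bundles $E_+$ and $E_-$, and the relevant relative Euler/Stiefel–Whitney contribution is not symmetric between them in a way that would cause cancellation — this asymmetry is exactly what distinguishes the $w_2$ operator (giving $P$, nonzero) from the $\partial^2$ obstruction (giving a sum that must vanish by the independence-of-$K$ argument). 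I expect this disentangling of signs/parities at the two $\theta$-vertices to be the main obstacle; everything else is a direct transcription of \cite{KM-deformation}. Finally, I would note that the fact that $w_2$ acts by an element of $\cR$ (a scalar, rather than a more complicated operator) is itself part of the statement and follows because, as in \cite{KM-deformation}, the relevant moduli-space cut-down is concentrated in the fixed ball $B(y_0)$ and is therefore independent of $K$, hence must be a universal element of the coefficient ring.
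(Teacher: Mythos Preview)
Your approach is essentially the paper's: the lemma is stated with a \qed and the preceding sentence ``As in \cite{KM-deformation}, we have'' is the entire proof. So you are right that nothing new is required beyond transcribing the argument from \cite{KM-deformation}, and your reduction to $\s=\mathrm{id}$ followed by localizing the basepoint near $\theta$ is the correct outline.

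However, your ``main obstacle'' is a confusion you have created for yourself by conflating two distinct computations. The $\partial^{2}=W$ analysis in section~2.3 concerns bubbling at the trivalent vertices of the web, and there the two vertices of $\theta$ each contribute a copy of $p(T_{1},T_{2},T_{3})$ which cancel in characteristic $2$. The operator $w_{2}$ is \emph{not} a sum of contributions from vertices: it is defined by cutting down moduli spaces by the class $\mathsf{w}_{2}(\mathbb{E}_{y})$ at a single smooth basepoint $y$, and the computation of that class in terms of the edge-holonomies $h_{1},h_{2},h_{3}$ (carried out in \cite{KM-deformation}) yields $P$ directly, with no sum over the two vertices and hence no parity cancellation to worry about. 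Once you drop that spurious concern, there is nothing left to do beyond citing the earlier paper, which is exactly what the present paper does.
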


\begin{figure}
    \begin{center}
        \includegraphics[scale=0.5]{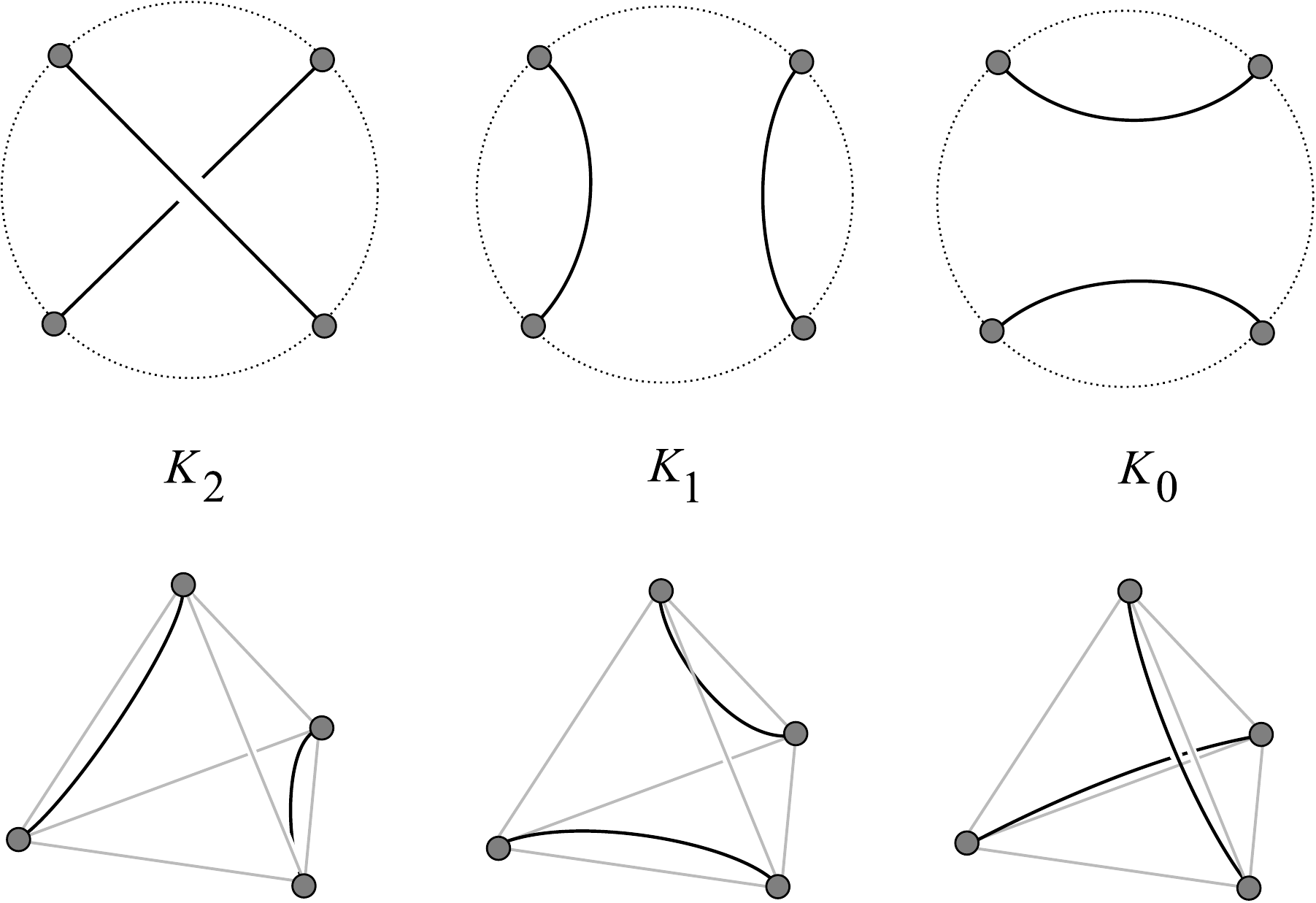}
    \end{center}
    \caption{\label{fig:Tetrahedra-skein}
    Links or webs $K_{2}$, $K_{1}$ and $K_{0}$ differing by the unoriented
    skein moves, in two different views.}
\end{figure}

\subsection{A two-dimensional cohomology class}
\label{subsec:Lambda-class}

Let $\arc$ be an arc in $Y$ with endpoints $\{p,q\}$ on $K\cup \theta$. The
interesting case will be when $p$ and $q$ lie on different
components, for example on $K$ and $\theta$ respectively. We require
that $p$ and $q$ lie on the interior of edges, not at the vertices of
the graph. We also require that $p$ and $q$ do not lie at endpoints of
$\omega$ (if any). There is a universal $\R^{3}$ bundle
\[
       \uE \to \arc \times \bonf^{\sharp}(K)_{\omega}.
\]
The restriction, $\uE_{p}$, of $\uE$ to the endpoint $\{p\} \times
\bonf^{\sharp}(K)_{\omega}$ carries an involution on the $\R^{3}$
fibers, because of the $\Z/2$
stabilizer at this singular point of the orbifold, so $\uE_{p}$
contains a distinguished real line subbundle, the $+1$ eigenspace of
the involution: \[  \mathbb{L}_{p} \subset \uE_{p} \to \bonf^{\sharp}(K)_{\omega}.\]

This line bundle is trivial. Indeed, we have:

\begin{lemma}
    Given the condition that $p$ is not a boundary point of $\omega$,
    a choice of orientation $o_{p}$ for\/ $\mathbb{L}_{p}$ is determined
    by an orientation of $K$ at $p$. If\/ $p_{1}$ and $p_{2}$ lie either
    side of a single endpoint of $\omega$ on $K$, and if $K$ is given
    the same orientation at $p_{1}$ and $p_{2}$, then the
    corresponding orientations $o_{p_{1}}$, $o_{p_{2}}$ of\/
    $\mathbb{L}_{p_{1}} \cong \mathbb{L}_{p_{2}}$ are opposite.
\end{lemma}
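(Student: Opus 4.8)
The plan is to analyze the line bundle $\mathbb{L}_{p}$ locally near the point $p$ and track how an orientation of it is pinned down by the local data. First I would recall the local model: at the interior point $p$ of an edge of $K$, the orbifold $(Y,K)$ has isotropy group $\Z/2$, acting on an $\R^{3}$ fiber by a reflection (an involution with a 1-dimensional $+1$-eigenspace and a 2-dimensional $-1$-eigenspace), and $\mathbb{L}_{p}$ is by definition the $+1$-eigenspace subbundle of $\uE_{p}$. The key observation is that this reflection, being an element of $\SO(3)$, is a rotation by $\pi$ about an axis; that axis is precisely the $-1$-eigenplane, and the $+1$-eigenline $\mathbb{L}_{p}$ is the line \emph{perpendicular} to the axis of rotation — equivalently, it is the line along which the rotation acts. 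When the marking data reduces the structure group near the edge to an $S^{1}$ (the circle of rotations fixing that axis), the relevant bundle over the edge carries the orientation data we need, and an orientation of $K$ at $p$ (a choice of direction along the edge) determines, via the conventions already set up in \cite{KM-deformation} and \cite{KM-singular} identifying orientation bundles, an orientation of $\mathbb{L}_{p}$. This gives the first assertion; I would phrase it as: the isomorphism class of the data at $p$ depends only on the edge and its coorientation, and once the edge is oriented there is a canonical trivialization $o_{p}$, exactly as in the construction of the maps $h_{i}$ in section~\ref{subsec:local-system}.

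For the second assertion, the point is to understand what happens as one crosses an endpoint of $\omega$ lying on $K$. Recall from Lemma~\ref{lemma:bonf} that the defining property of connections in $\bonf^{\sharp}(K)_{\omega}$ is that the limiting holonomy of $\hat A$ on a small circle linking $\omega$ is $-1$ in $\SU(2)$. Now take a small arc of $K$ through such an endpoint, with $p_{1}$ on one side and $p_{2}$ on the other. Parallel transport in the $\SO(3)$ adjoint bundle along this arc carries the involution at $p_{1}$ to the involution at $p_{2}$, hence carries $\mathbb{L}_{p_{1}}$ isomorphically to $\mathbb{L}_{p_{2}}$; the question is whether this identification is orientation-preserving relative to the two orientations $o_{p_{1}}$, $o_{p_{2}}$ determined by the common orientation of $K$. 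I would compute this by lifting locally to $\SU(2)$: on the complement of $\omega$ the bundle $\hat E$ is an $\SU(2)$-bundle, and the reduction along the edge of $K$ is to a maximal torus; but at the endpoint of $\omega$ the trivialization of $\hat E$ fails to extend, with obstruction $-1 \in \SU(2)$. That $-1$ is the element $\exp(\pi i \operatorname{diag}(1,-1))$ of the torus, and its effect on the $+1$-eigenline of the adjoint action — which is the real line spanned by $\operatorname{diag}(i,-i)$ in $\mathfrak{su}(2)$ — is orientation-reversing. Hence $o_{p_{1}}$ and $o_{p_{2}}$ are opposite.

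The main obstacle I expect is bookkeeping rather than conceptual: making precise the sense in which "an orientation of $K$ at $p$ determines $o_{p}$" so that the sign computation at the endpoint of $\omega$ is unambiguous. This requires being careful about the chain of identifications — the orientation bundle of $K$, the orientation bundles of $\xi$ and $\eta$ in the decomposition of the adjoint bundle, and the trivialization of $\hat E$ on $Y\setminus\omega$ — all of which are set up in the earlier papers \cite{KM-singular} and \cite{KM-deformation}; the present proof should just invoke those conventions and then isolate the one place where the local triviality of $\hat E$ breaks, namely at $\partial\omega$, and record that the obstruction class $-1\in\SU(2)$ acts on $\mathbb{L}_{p}$ by $-1\in\R^{\times}$. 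Once that is pinned down, both statements follow. I would also remark that this is the local-coefficient analogue of the familiar fact that $w_{2}$-data represented by $\omega$ introduces sign changes across $\partial\omega$, and that it is consistent with Remark~(iii) of section~\ref{subsec:local-system}, where the parity constraints on the lattice $\Lambda$ encode exactly these half-period ambiguities.
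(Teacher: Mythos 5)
Your proposal has the right ingredients (the preferred $\SU(2)$ lift from Lemma~\ref{lemma:bonf}, the holonomy $-1$ around $\omega$), but it is missing the one concrete mechanism on which the paper's proof rests: after orienting $K$ (equivalently its normal bundle, hence the meridian), the \emph{limiting holonomy of $\hat A$ around a small oriented circle linking $K$ at $p$} is an element of order $4$ in $\SU(2)$, and the $2$-sphere of order-$4$ elements is canonically the unit sphere in the adjoint $\R^{3}$ bundle; under this identification the holonomy is a unit vector that lies in, and therefore orients, $\mathbb{L}_{p}$. Your first paragraph instead defers to ``the conventions already set up'' in the earlier papers, which asserts rather than produces the orientation; and your description of the eigenspaces is garbled --- for a rotation by $\pi$ the axis \emph{is} the one-dimensional $+1$-eigenspace $\mathbb{L}_{p}$, and the $-1$-eigenspace is the plane perpendicular to it, not the other way around.

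The second assertion then fails for a related reason. You attribute the sign flip to the claim that the obstruction $-1\in\SU(2)$ ``acts orientation-reversingly on the $+1$-eigenline of the adjoint action.'' But $-1$ is central, so its adjoint action on $\R^{3}$ is the identity; it reverses nothing. The actual mechanism is that the distinguished generator of $\mathbb{L}_{p}$ is the order-$4$ holonomy element itself: meridians at $p_{1}$ and $p_{2}$, oriented compatibly with a common orientation of $K$, differ in the complement of $K\cup\omega$ by a small loop linking $\omega$, whose holonomy is $-1$. Hence the two order-$4$ holonomy elements differ by the central element, i.e.\ they are antipodal unit vectors in the adjoint bundle, which is exactly the statement that $o_{p_{1}}$ and $o_{p_{2}}$ are opposite. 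Without first establishing that the holonomy element is the orienting vector, there is nothing in your argument for the $-1$ to act on, so the sign computation does not go through as written.
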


\begin{proof}
We use the characterization of
$\bonf^{\sharp}(K)_{\omega}$ in Lemma~\ref{lemma:bonf}.  The connection
$[A]\in \bonf^{\sharp}(K)_{\omega}$ has a preferred lift to an
$\SU(2)$ connection $\hat A$ in $U_{p}\setminus K$ for some neighborhood
$U_{p}$ of $p$ in $Y$. After orienting (the normal bundle to) $K$, we
can consider the limiting holonomy of $\hat A$ around small circles
linking $p\in K$, which is an element of order $4$ in $\SU(2)$. The
2-sphere which parametrizes elements of order $4$ is identified with
the unit sphere in the $\R^{3}$ bundle, and under this identification the limiting holonomy is
an element of $\mathbb{L}_{p}$
\end{proof}

With the lemma in mind, we introduce the following notation.

\begin{definition}\label{def:dot}
    A \emph{dot} on $K$ is a chosen point $p$ on $K$, not a boundary
    point of $\omega$, together with a choice of orientation $o_{p}$
    for the line bundle $\mathbb{L}_{p}\to
    \bonf^{\sharp}(K)_{\omega}$. We may omit explicit mention of
    $o_{p}$, and simply refer to $p$ as a dot. If $p$ is a dot, we
    write $\bar{p}$ for dot with the same underlying point and the
    opposite orientation for the line bundle. We note that a choice of
    orientation of $\mathbb{L}_{p}$ is equivalent to a choice of
    orientation of $K$ near $p$.
\end{definition}

Suppose now that $p$ and $q$ are dots, and let us return to the arc
$a$ joining them as introduced above. The dot $p$ determines a
distinguished section of $\mathbb{L}_{p}$ and hence
a distinguished section $i_{p}$ of the $\R^{3}$ bundle
$\uE_{p}$. Similarly, using the dot $q$, 
we obtain a distinguished section $i_{q}$. Changing the sign
of the second one, we obtain a distinguished section
\[
         I = (i_{p} , -i_{q})
\]
of the restriction of $\uE$ to $\partial a \times
\bonf^{\sharp}(K)$. 

The distinguished section on the boundary allows us to define a relative
Euler class; or a top Stiefel-Whitney class
\[
\begin{aligned}
    \mathbb{w}_{3} &= w_{3}(\uE , I) \\ & \in H^{3}\left(
        (\arc, \partial \arc) \times \bonf^{\sharp}(K) ; \F_{2}
    \right).
\end{aligned}
\]
We now take the slant product with the relative fundamental class of
the arc to obtain a class on $\bonf^{\sharp}(K)_{\omega}$:

\begin{definition}
For an arc $a$ as above whose endpoints $p$ and $q$ are dots, 
we define a 2-dimensional cohomology class with $\F_{2}$ coefficients
on $\bonf^{\sharp}(K)_{\omega}$ as
    \begin{equation}\label{eq:lambda-class}
        \begin{aligned}
            \lambda &= \mathbb{w}_{3} / [\arc, \partial \arc] \\
            &\in H^{2}\left( \bonf^{\sharp}(K) ; \F_{2} \right).
        \end{aligned}
    \end{equation}
\end{definition}

The next lemma shows that the arc $a$ itself plays only an auxiliary
role in this construction.

\begin{lemma}
    In the above construction, the class $\lambda$ depends only on the
    dots $p$, $q$. It does not otherwise depend on the arc $\arc$.
\end{lemma}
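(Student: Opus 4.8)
The plan is to show that if $\arc$ and $\arc'$ are two arcs with the same pair of endpoint-dots $p$, $q$, then the classes $\lambda$ and $\lambda'$ they produce agree. The natural strategy is an interpolation argument: join $\arc$ and $\arc'$ to form a loop, or more carefully, isotope $\arc$ to $\arc'$ through a one-parameter family of arcs $\arc_{s}$, $s\in[0,1]$, all with the fixed endpoints $p$ and $q$ (generic arcs in a $3$-manifold $Y$ with fixed endpoints are isotopic rel endpoints, since $\pi_{1}$ plays no role here — or if it does, one handles the finitely many homotopy classes separately; the interesting content is the independence for a fixed isotopy class). First I would set up the total family $\mathcal{A} = \bigcup_{s} \arc_{s} \cong [0,1]\times \arc$, sitting inside $Y$, giving a universal bundle $\uE \to ([0,1]\times\arc)\times\bonf^{\sharp}(K)_{\omega}$. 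The section $I=(i_{p},-i_{q})$ on the endpoints extends over the whole family because $p$ and $q$ are constant throughout (the sections $i_{p}$, $i_{q}$ depend only on the dots, by the preceding lemma, not on the arc). Thus we get a relative class $\mathbb{w}_{3}\in H^{3}(([0,1]\times\arc, [0,1]\times\partial\arc)\times\bonf^{\sharp}(K)_{\omega};\F_{2})$.

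Next I would take the slant product of this class with the relative fundamental class of $[0,1]\times\arc$ rel $[0,1]\times\partial\arc$ — but this is a $2$-dimensional relative cycle, so the slant product lands in $H^{1}(\bonf^{\sharp}(K)_{\omega};\F_{2})$, which is the wrong degree. The correct move is to restrict $\mathbb{w}_{3}$ to each slice $\{s\}\times\arc$ and observe that the slant product $\mathbb{w}_{3}/[\arc_{s},\partial\arc_{s}]$ is a cohomology class on $\bonf^{\sharp}(K)_{\omega}$ that varies continuously in $s$; being a class with $\F_{2}$ coefficients in a fixed group $H^{2}(\bonf^{\sharp}(K)_{\omega};\F_{2})$ (which is discrete), it is therefore constant in $s$. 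More invariantly: the boundary of the relative cycle $[0,1]\times[\arc,\partial\arc]$ in the pair $([0,1]\times\arc,\ \partial([0,1]\times\arc))$ decomposes so that $\partial$ of the slant-with-$[0,1]\times\arc$ computation expresses $\lambda' - \lambda$ (i.e.\ $\lambda'+\lambda$ mod $2$) as the slant product of $\mathbb{w}_{3}$ with the part of the boundary coming from $\{0,1\}\times\arc$, and this equals the coboundary ($= 0$ in $\F_{2}$, or more precisely a term that vanishes because $\mathbb{w}_{3}$ restricted to $[0,1]\times\partial\arc$ is already trivialized by the section $I$). So $\lambda$ is an isotopy invariant of the arc rel its endpoint-dots.

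The remaining point is to dispose of the dependence on the isotopy class of $\arc$, i.e.\ to handle a loop $\gamma = \arc\cup\bar{\arc}'$ based at $p$ (or a path composed with a based loop in $Y$). For this I would argue that changing $\arc$ by a loop in $Y\setminus(K^{\sharp}\cup\omega)$ — pushing the arc across such a loop — changes $\mathbb{w}_{3}/[\arc,\partial\arc]$ by the pullback under the classifying map of a class supported on that loop; but the universal bundle $\uE$ has no $w_{3}$ living in the $Y$-directions of any degree that would contribute here, because its structure is pulled back from $\bonf^{\sharp}(K)_{\omega}$ up to the reducibility data along $K^{\sharp}$ (which is what is being measured at the endpoints, and is already accounted for by $i_{p}$, $i_{q}$). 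Equivalently, one can cite that $w_{3}$ of the adjoint bundle $\uE$ restricted to $\{y\}\times\bonf^{\sharp}(K)_{\omega}$ for $y$ in the smooth part is independent of $y$ up to the line-bundle twisting — this is the content of Lemma~\ref{lemma:w-relations} and the surrounding discussion.

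The main obstacle I anticipate is the bookkeeping of the relative fundamental classes and making the "slant product is locally constant in a family" argument rigorous: one must be careful that the trivializing section $I$ on $[0,1]\times\partial\arc$ is genuinely the \emph{same} section on every slice (which is where the previous lemma, that $i_{p}$ and $i_{q}$ depend only on the dots, is essential), and that the relative class $\mathbb{w}_{3}$ over the thickened pair restricts correctly on the end slices. Once that compatibility is in place, the conclusion $\lambda_{\arc}=\lambda_{\arc'}$ follows from a one-line naturality/continuity statement, and the independence of homotopy class follows from the vanishing of the relevant component of $w_{3}(\uE)$ along loops in $Y$.
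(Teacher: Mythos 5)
Your overall reduction is the right one and matches the paper's: after joining the two arcs into a $1$-cycle $b$ in $Y\setminus K$ (the contributions near $p$ and $q$ cancel because the sections $i_{p}$, $-i_{q}$ are the same for both arcs), everything comes down to showing that $w_{3}(\uE)/[b]=0$ in $H^{2}(\bonf^{\sharp}(K)_{\omega};\F_{2})$ for the universal bundle over $(Y\setminus K)\times\bonf^{\sharp}(K)_{\omega}$. (Your preliminary isotopy/continuity discussion is harmless but unnecessary; homotopy invariance of the construction already disposes of arcs in a fixed homotopy class rel endpoints, and the entire content is the homotopy-class independence.)

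The gap is in your justification of that vanishing. What must be shown is that $w_{3}(\uE)$ has no K\"unneth component in $H^{1}(Y\setminus K)\otimes H^{2}(\bonf^{\sharp}(K)_{\omega})$, since that is the component a slant product with a $1$-cycle detects. Neither of your two suggested reasons establishes this: the observation that the restriction of $w_{3}(\uE)$ to slices $\{y\}\times\bonf^{\sharp}(K)_{\omega}$ is independent of $y$ only controls the $H^{0}\otimes H^{3}$ component, and Lemma~\ref{lemma:w-relations} is a statement about the operators $w_{i}$ acting on Floer homology, not about the cohomology of $(Y\setminus K)\times\bonf^{\sharp}(K)_{\omega}$. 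The paper's argument is that the bundle has trivial $w_{2}$ over $Y\setminus K$ --- each irreducible connection lifts to an $\SU(2)$ connection with stabilizer $\pm 1$ --- so $w_{2}(\uE)$ is represented by a cocycle pulled back from the factor $\bonf^{\sharp}(K)_{\omega}$; since $w_{3}$ is the Bockstein of $w_{2}$, it too is pulled back from that factor, and therefore slants to zero against every $1$-cycle (and $2$-cycle) in $Y\setminus K$. Your phrase ``its structure is pulled back from $\bonf^{\sharp}(K)_{\omega}$ up to the reducibility data'' gestures at this conclusion, but it is precisely the assertion that needs proof, and the proof runs through $w_{2}$ and the Bockstein rather than through anything you cite.
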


\begin{proof}
     Consider the universal $\R^{3}$ bundle
    \[
            \uE \to (Y\setminus K)  \times \bonf^{\sharp}(K)_{\omega}.
    \]
    The assertion to be proved is equivalent to the statement that
    \[
    \begin{aligned}
        w_{3}(\uE) / [b] & = 0 \\
        &\in H^{2}\left( \bonf^{\sharp}(K)_{\omega} ; \F_{2} \right).
    \end{aligned}
     \]
   for any 1-cycle $b$ in $Y\setminus K$. The bundle has
   trivial $w_{2}$ on $Y\setminus K$, so each irreducible connection
   lifts to an $\SU(2)$ connection with stabilizer $\pm 1$.  This
   means that $w_{2}(\uE)$ can be represented by a $2$-cocycle which
   is pulled back from $ \bonf^{\sharp}(K)_{\omega} $. The class $w_{3}$ is
   obtained by applying a Bockstein homomorphism to $w_{2}$. So the
   class $w_{3}(\uE)$ is also pulled back from $
   \bonf^{\sharp}(K)_{\omega}$. It follows that $w_{3}(\uE) / [b]$ is zero,
   for all $1$-cycles $b$ in $Y\setminus K$ (and incidentally all $2$-cycles also). 
\end{proof}

The lemma allows us to write the class as a function of the two
dots,
\begin{equation}\label{eq:lambda-class-pq}
             \lambda_{pq} \in  H^{2}\left( \bonf^{\sharp}(K) ; \F_{2} \right),
\end{equation}

The next lemma asks how $\lambda_{pq}$ changes if we change the
orientation $o_{q}$ at one endpoint: that is we replace $q$ by
$\bar{q}$. We introduce the following notation: we write
\[
             \lambda'_{pq} = \lambda_{p\bar{q}}.
\]

\begin{lemma}\label{lemma:lambda-relations-cohomology}
    Let  $p$ and $q$ be dots, and let $\lambda_{pq}$ and $\lambda'_{pq}$ be the
    resulting classes, as above. Then these classes satisfy the relations:
    \begin{equation}\label{eq:lambda-relations}
        \begin{aligned}
            \lambda_{pq} + \lambda'_{pq} &= w_{2}(\uE_{q}) \\
            \lambda_{pq} \lambda'_{pq} &= 0
        \end{aligned}
     \end{equation}
\end{lemma}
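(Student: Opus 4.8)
The plan is to identify $\lambda_{pq}$ and $\lambda'_{pq}$ with \emph{primary difference classes}, reduced mod $2$, of a pair of sections of an $S^{2}$-bundle over $\bonf^{\sharp}(K)_{\omega}$, and to deduce both relations from that description. Write $B=\bonf^{\sharp}(K)_{\omega}$. At the endpoint $q$ the bundle $\uE_{q}\to B$ carries the involution coming from the orbifold $\Z/2$-stabilizer; its $+1$ eigenspace $\mathbb{L}_{q}$ is a line bundle, trivial by the lemma proved above and oriented by the dot, so the dot gives a nowhere-zero section $i_{q}$ with $\mathbb{L}_{q}=\langle i_{q}\rangle$, and $\uE_{q}=\langle i_{q}\rangle\oplus\mathbb{L}_{q}^{\perp}$ with $w_{2}(\mathbb{L}_{q}^{\perp})=w_{2}(\uE_{q})$. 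Since $\arc\times B$ deformation-retracts onto $\{p\}\times B$, the distinguished section $i_{p}$ of $\uE_{p}$ extends to a nowhere-zero section $\hat\imath_{p}$ of $\uE$ over all of $\arc\times B$; set $w=\hat\imath_{p}|_{\{q\}\times B}$. Regarding $w,i_{q},-i_{q}$ as sections of the sphere bundle $S(\uE_{q})\to B$, and writing $\bar d$ for the mod-$2$ primary difference class of two such sections (well defined because $\pi_{0}(S^{2})=\pi_{1}(S^{2})=0$), I claim that in $H^{2}(B;\F_{2})$
\[
   \lambda_{pq}=\bar d(w,-i_{q}),\qquad \lambda'_{pq}=\bar d(w,i_{q}).
\]
Indeed $w_{3}(\uE,(i_{p},w))=0$ since $(i_{p},w)$ extends to the nowhere-zero section $\hat\imath_{p}$; replacing the boundary trivialization $(i_{p},w)$ by $(i_{p},\mp i_{q})$ alters $\mathbb{w}_{3}$ by the image under the connecting homomorphism of the difference class of $w$ and $\mp i_{q}$ over $\{q\}\times B$; and the slant product with $[\arc,\partial\arc]$, which meets $\{q\}\times B$ in the single point $\{q\}$, returns that difference class. (Orientation signs are irrelevant mod $2$.) For $\lambda'_{pq}=\lambda_{p\bar q}$ one uses that the dot $\bar q$ determines the section $-i_{q}$, so the boundary trivialization becomes $(i_{p},i_{q})$.

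The first relation now follows from additivity of difference classes (valid mod $2$ since $\pi_{1}(S^{2})=0$): from $\bar d(w,i_{q})=\bar d(w,-i_{q})+\bar d(-i_{q},i_{q})$ we get, on adding, $\lambda_{pq}+\lambda'_{pq}=\bar d(-i_{q},i_{q})$. Now $\bar d(-i_{q},i_{q})$ is the primary difference of the two pole sections of $S(\langle i_{q}\rangle\oplus\mathbb{L}_{q}^{\perp})$, which is the classical obstruction class equal to $w_{2}(\mathbb{L}_{q}^{\perp})=w_{2}(\uE_{q})$. (If one prefers a hands-on check: relative to a generic reference section $u$ of $S(\uE_{q})$, $\bar d(-i_{q},i_{q})=\bar d(u,-i_{q})+\bar d(u,i_{q})$, and these summands are carried by the disjoint loci $\{u=i_{q}\}$ and $\{u=-i_{q}\}$, whose union is the zero locus of the $\mathbb{L}_{q}^{\perp}$-component of $u$, representing $w_{2}(\mathbb{L}_{q}^{\perp})$.) Hence $\lambda_{pq}+\lambda'_{pq}=w_{2}(\uE_{q})$.

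For the second relation, put $A_{1}=\{\,b\in B:w(b)\ne i_{q}(b)\,\}$, an open set. Over $A_{1}$ both $w$ and $-i_{q}$ are sections of the subbundle of $S(\uE_{q})$ got by deleting the $i_{q}$-direction from each fibre; this subbundle has contractible fibres ($S^{2}$ minus a point), so $w\simeq -i_{q}$ over $A_{1}$, and therefore $\lambda_{pq}=\bar d(w,-i_{q})$ lies in the image of $H^{2}(B,A_{1};\F_{2})\to H^{2}(B;\F_{2})$. Symmetrically $\lambda'_{pq}$ lies in the image of $H^{2}(B,A_{2};\F_{2})\to H^{2}(B;\F_{2})$ with $A_{2}=\{\,b:w(b)\ne -i_{q}(b)\,\}$. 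By naturality of cup products for pairs, $\lambda_{pq}\lambda'_{pq}$ lies in the image of $H^{4}(B,A_{1}\cup A_{2};\F_{2})\to H^{4}(B;\F_{2})$. But $i_{q}(b)\ne -i_{q}(b)$ for every $b$, so each $b$ lies in $A_{1}$ or in $A_{2}$; thus $A_{1}\cup A_{2}=B$, the group $H^{4}(B,B;\F_{2})$ is zero, and $\lambda_{pq}\lambda'_{pq}=0$.

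The main obstacle is the identification in the first paragraph — translating precisely between the relative-$w_{3}$-and-slant-product definition of $\lambda_{pq}$ and the primary-difference-class formalism — carried out legitimately despite $B=\bonf^{\sharp}(K)_{\omega}$ being infinite-dimensional, so that one should work with a CW model for $B$, or pull everything back from the universal sphere bundle $S(\underline{\R}\oplus\gamma_{2})\to\mathrm{BO}(2)$. In the latter model the data $(\uE_{q},\langle i_{q}\rangle,w)$ is classified by a single map, $H^{*}$ is the free $\F_{2}[w_{1},w_{2}]$-module on $1$ and the universal class $\Lambda$, and the ring relation $\Lambda^{2}=w_{2}\Lambda$ (obtained by restricting to the two pole sections, where $\Lambda$ pulls back to $0$ and to $w_{2}$ respectively) together with $\Lambda'=\Lambda+w_{2}$ gives both identities at once; this also provides an alternative to the computations of the last two paragraphs.
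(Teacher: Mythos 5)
Your proof is correct, and it ultimately rests on the same two geometric facts as the paper's argument: the sum $\lambda_{pq}+\lambda'_{pq}$ localizes to a single obstruction at $q$ that is identified with $w_{2}(\uE_{q})$, and $\lambda_{pq}$, $\lambda'_{pq}$ are supported on the disjoint loci where the transported section $w$ agrees with $i_{q}$ and with $-i_{q}$ respectively. The packaging, however, is genuinely different. For the first relation the paper reduces (by arc-independence and the sign convention in $I$) to computing $\lambda_{qq}$ with the constant arc, and evaluates it via $w_{3}(\pi^{*}(E)\otimes\mu)/[S^{1}]=w_{2}(E)$ using the M\"obius bundle and the splitting principle; you instead invoke additivity of primary difference classes and compute $\bar d(-i_{q},i_{q})$ as the zero locus of the $\mathbb{L}_{q}^{\perp}$-component of a generic section. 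For the second relation the paper writes down the explicit interpolating sections $I(t)=(1-t)i_{0}-ti_{1}$ and $I'(s)=(1-s)i_{0}+si_{1}$ and checks, via the determinant $s+t-2st>0$, that their zero sets are disjoint; you lift the two classes to $H^{2}(B,A_{1})$ and $H^{2}(B,A_{2})$ with $A_{1}\cup A_{2}=B$ and conclude from $H^{4}(B,B)=0$. Your relative-cohomology version is arguably cleaner, since it avoids choosing cocycle representatives, and the reformulation $\lambda_{pq}=\bar d(w,-i_{q})$ makes the additivity behind the first relation transparent; the cost is the extra care you rightly flag about running classical obstruction theory on the infinite-dimensional $\bonf^{\sharp}(K)_{\omega}$ (via a CW model or by pulling back from $\mathrm{BO}(2)$), a point the paper's direct manipulation of relative $w_{3}$ classes sidesteps. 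Both arguments are complete.
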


\begin{proof}
Because of the independence of the choice of arc, and the way the
signs are used in the definition of $I$ above, the first relation is equivalent to
saying
\[
          \lambda_{qq} = w_{2}(\uE_{q}),
\]    
where the left-hand side can be computed using the constant arc from
$q$ to $q$.

The general statement at the level of characteristic classes is the following.
Suppose we have an $\R^{3}$ bundle $E\to T$ with a section
$i_{0}$. Consider the pull-back $\pi^{*}(E)$ to $[0,1]\times T$ with a section
$I$ which is equal to $i_{0}$ on $\{0\}\times T$ and $-i_{0}$ on
$\{1\}\times T$. Then the result of slanting $w_{3}(\pi^{*}(E),
I)$ with the fundamental class of $[0,1]$ is $w_{2}(E)$:
\[
        w_{3}(\pi^{*}(E), I)/ [0,1] = w_{2}(E).
\]
This can be verified by pulling back $E$ to $S^{1}\times T$ and
tensoring by the M\"obius bundle $\mu$ on $S^{1}$, in which case the
assertion is:
\[
      w_{3}(\pi^{*}(E)\otimes \mu )/ [S^{1}] = w_{2}(E).
\]
In this form, the verification is straightforward, using the splitting
principle. This completes the proof of the first relation.

To set the second relation in a more general context, consider again
an $\R^{3}$ bundle $E\to T$ with two non-vanishing sections $i_{0}$
and $i_{1}$. Let $I$ be a path of sections, from $i_{0}$ to $-i_{1}$,
through sections which may vanish: we  take explicitly
\[
         I(t) = (1-t) i_{0} - t i_{1}.
     \]
     Similarly, let $I'(s)$ be the path from $i_{0}$ to $i_{1}$ given by
     \[
         I'(s) = (1-s) i_{0} + s i_{1}.
     \]
We have cohomology classes
by \[
    \begin{aligned}
        \lambda&=w_{3}(E,I)/[0,1]\\ & \qquad \qquad \in H^{2}(T;\F_{2}) \\
        \lambda'&= w_{3}(E, I')/[0,1]\\ & \qquad \qquad  \in H^{2}(T;\F_{2}),
    \end{aligned}
\]
where we now interpret $I$ and $I'$ as sections on $[0,1]\times T$
that are non-zero at the boundaries. To show that $\lambda\lambda'=0$,
it is sufficient to show that there is no $(t,s)$ in the interior of $[0,1]\times[0,1]$
for which the sections $I(t)$ and $I'(s)$ have a common zero in $T$. A
necessary condition for a common zero is that the determinant of the matrix
\[
    \begin{pmatrix}
        (1-t) & -t \\
        (1-s) & s
    \end{pmatrix}
\]
is zero. But the determinant
is $s+t-2st$ which is strictly positive on the interior of
$[0,1]\times [0,1]$.
The result follows.
\end{proof}

\begin{corollary}
    The class $\lambda_{pq}$ satisfies the relation
    \[
                      \lambda^{2}_{pq} + w_{2}(\uE_{q}) \lambda_{pq} = 0.
    \]    
\end{corollary}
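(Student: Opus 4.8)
The plan is to derive this purely formally from the two relations already established in Lemma~\ref{lemma:lambda-relations-cohomology}, with no further geometric input. Recall that we work with $\F_{2}$ coefficients throughout, so signs are irrelevant and $-1 = 1$.

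First I would use the first relation of \eqref{eq:lambda-relations}, namely $\lambda_{pq} + \lambda'_{pq} = w_{2}(\uE_{q})$, to express $\lambda'_{pq} = w_{2}(\uE_{q}) + \lambda_{pq}$. Then I would substitute this into the second relation $\lambda_{pq}\lambda'_{pq} = 0$, obtaining
\[
   0 = \lambda_{pq}\lambda'_{pq} = \lambda_{pq}\bigl(w_{2}(\uE_{q}) + \lambda_{pq}\bigr) = w_{2}(\uE_{q})\,\lambda_{pq} + \lambda^{2}_{pq},
\]
which is exactly the asserted relation. That completes the argument.

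There is no real obstacle here: the corollary is a formal algebraic consequence of the lemma, which is where all the actual content (the splitting-principle computation giving the first relation, and the common-zero/determinant argument giving the second) has already been placed. The only point worth flagging is the reliance on characteristic $2$, which is what lets us rewrite $w_{2}(\uE_{q}) - \lambda_{pq}$ as $w_{2}(\uE_{q}) + \lambda_{pq}$ and conclude that $\lambda_{pq}$ is a root of the quadratic $x^{2} + w_{2}(\uE_{q})\,x$; over $\F_{2}$ this is consistent with the fact that both $\lambda_{pq}$ and $\lambda'_{pq}$ are the two roots, since their sum is $w_{2}(\uE_{q})$ and their product is $0$.
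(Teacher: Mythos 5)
Your argument is correct and is exactly the paper's: the corollary is stated there as an immediate consequence of the two relations in Lemma~\ref{lemma:lambda-relations-cohomology}, obtained by the same substitution of $\lambda'_{pq} = \lambda_{pq} + w_{2}(\uE_{q})$ into $\lambda_{pq}\lambda'_{pq}=0$ over $\F_{2}$. No difference in approach.
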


\begin{proof}
    This is an immediate corollary of the two relations in the lemma.
\end{proof}

\subsection{Operators from the two-dimensional classes}

In the usual way, and following the exposition in \cite{KM-deformation}, the cohomology class $\lambda_{pq}$ gives rise
to an operator
\begin{equation}\label{eq:Lambda-operator}
        \Lambda_{pq} : \Isharp(Y, K ; \Gamma_{\s})_{\omega} \to  \Isharp(Y, K ; \Gamma_{\s})_{\omega}.
\end{equation}
In a little more detail, let $a$ be the chosen arc joining the two dots,
regarded as subset of the cylinder $\check X = \R\times \check
Y$, in the slice where the $\R$ coordinate is zero.
Following  \cite[section 4.3]{KM-deformation}, let
$Z\subset \check X$ be a subset of $\check X$ which includes a
neighborhood of $a$ and such that the restriction map
\[
           H^{1}(Y\setminus K; \F_{2}) \to  H^{1}(Z\setminus K; \F_{2}) 
\]
is injective. The latter condition means there is a well-defined
restriction map for marked connections,
\[
          M(\alpha, \beta) \to \bonf^{*}(Z ; \mu_{Z})
\]
where $\mu_{Z}$ is the intersection with $Z$ of the marking data
$\R\times \mu_{\omega}$. Because $Z$ contains a neighborhood of $a$, the
class $\lambda_{pq}$ can be defined on $\bonf^{*}(Z ; \mu_{Z})$, where
it is dual to a stratified codimension-2 subvariety $V$. The matrix
entries of $\Lambda_{pq}$ at the chain level are defined by counting
points of the intersections
\[
        M(\alpha, \beta) \cap V
\]
and weighting them using the local system. As in
\cite{KM-deformation}, the necessary compactness results hold because
the cohomology class has dimension $2$ and $Z$ can be chosen so that
it meets the foam only in the faces (at neighborhoods of $p$ and $q$).

Using notation that is parallel to the notation for $\lambda_{pq}$ and
$\lambda'_{pq}$, we write $\Lambda'_{pq}$ for the operator of the same
form as \eqref{eq:Lambda-operator}, but using $\bar{q}$ in place of
$q$. 
The relations in
Lemma~\ref{lemma:lambda-relations-cohomology}, satisfied by
$\lambda_{pq}$ and $\lambda'_{pq}$, give rise to relations satisfied
by the corresponding operators on Floer homology.

\begin{lemma}\label{lemma:Lambda-relations}
    Let  $\Lambda_{pq}$ and $\Lambda'_{pq}$ be the
    operators on $\Isharp(K;\Gamma_{\s})_{\omega}$ arising from a pair
    of dots $\{p,q\}$ as above.
   Then these operators satisfy the relations:
    \begin{equation}\label{eq:Lambda-relations}
        \begin{aligned}
            \Lambda_{pq} + \Lambda'_{pq} &= \s(P) \\
            \Lambda_{pq} \Lambda'_{pq} &= \s(Q_{pq}),
        \end{aligned}
     \end{equation}   
    where $P$ is the element of\/ $\cR$ given by \eqref{eq:PQ-formulae},
    and
    \[
             Q_{pq} = (T_{m_{p}}^{2} + T_{m_{p}}^{-2}) + 
                            (T_{m_{q}}^{2} + T_{m_{q}}^{-2}).
     \]
   In the above formula, $T_{m_{p}}$ and $T_{m_{q}}$ are the variables
   from $\{T_{0}, T_{1}, T_{2}, T_{3}\}$ associated to the edges of\/ $K
   \cup \theta$ on which $p$ and $q$ lie. Thus, $m_{p}=1, 2$ or $3$ if
   $p$ lies on the edge $e_{1}$, $e_2$ or $e_{3}$ of $\theta$, and
   $m_{p}=0$ if $p$ lies on $K$.
\end{lemma}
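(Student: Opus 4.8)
The plan is to upgrade the two cohomological identities of Lemma~\ref{lemma:lambda-relations-cohomology} to identities of operators on $\Isharp(K;\Gamma_{\s})_{\omega}$. The additive relation will pass through essentially formally; the multiplicative relation will acquire a correction term, which I expect to be the contribution of instanton bubbling at the two singular points $p$ and $q$.

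For the first relation, recall that $\Lambda_{pq}$ and $\Lambda'_{pq}$ are defined at the chain level by intersecting the trajectory moduli spaces with geometric representatives of the Poincar\'e duals of $\lambda_{pq}$ and $\lambda'_{pq}=\lambda_{p\bar q}$ on $\bonf^{*}(Z;\mu_{Z})$, weighted by the local system. Since intersection counts are additive over a disjoint union of representative cycles, the sum $\Lambda_{pq}+\Lambda'_{pq}$ is chain homotopic to the operator attached to the class $\lambda_{pq}+\lambda'_{pq}$, which by Lemma~\ref{lemma:lambda-relations-cohomology} equals $w_{2}(\uE_{q})$. Now $\uE$ is an ordinary $\R^{3}$-bundle in a neighbourhood of $q$ (the orbifold isotropy acts on its fibres over $K\cup\theta$ but does not obstruct the bundle), so $\uE_{q}\cong\uE_{q'}$ for a basepoint $q'$ in the smooth part of $(Y,K)$ near $q$; and the argument used to prove the independence of $\lambda_{pq}$ on the arc shows that $w_{2}$ of the basepoint bundle is pulled back from $\bonf^{\sharp}(K)_{\omega}$, hence independent of $q'$. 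Thus the operator attached to $w_{2}(\uE_{q})$ is precisely the operator $w_{2}$ of Lemma~\ref{lemma:w-relations}, namely multiplication by $\s(P)$, which gives $\Lambda_{pq}+\Lambda'_{pq}=\s(P)$.

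For the second relation, fix the arc $a$ and the region $Z$ once and for all, and choose geometric representatives $V$ of $\lambda_{pq}$ and $V'$ of $\lambda_{p\bar q}$, both supported near $a$; by the transversality behind the identity $\lambda_{pq}\lambda'_{pq}=0$ in Lemma~\ref{lemma:lambda-relations-cohomology} (the sections $I(t)$ and $I'(s)$ have no common zero for $(t,s)$ in the open square) we may take $V$ and $V'$ disjoint. The composite $\Lambda_{pq}\circ\Lambda'_{pq}$ is then computed by the usual neck-stretching argument, inserting the two cutting-down conditions at time slices whose separation is allowed to vary: the resulting one-parameter family of cut-down moduli spaces has ends of three types. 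The broken-trajectory ends recover $\Lambda_{pq}\Lambda'_{pq}$ up to chain homotopy; the ``coincident'' end contributes the operator attached to $\lambda_{pq}\lambda'_{pq}$, realised by $V\cap V'$, which vanishes because $V\cap V'=\emptyset$; and the remaining ends are those at which instanton energy concentrates at a point of $\R\times(K\cup\theta)$ near one of the two endpoints of $a$, that is, bubbling at $p$ or at $q$. Each such end is governed by a standard local model: a moduli space of charge-$1$ ASD orbifold connections on a cone neighbourhood of an interior point of an edge, a one-dimensional model whose two ends the local system distinguishes, assigning them weights whose exponent of the edge variable $T_{m}$ differs by $\pm2$ --- the same $\SU(2)$-versus-$\SO(3)$ factor of $2$ that enters the exponents in \eqref{eq:T-cobordism}. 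Hence the end near a point lying on the edge carrying $T_{m}$ contributes $\s(T_{m}^{2}+T_{m}^{-2})$, and adding the contributions at $p$ (with $m=m_{p}$) and at $q$ (with $m=m_{q}$) gives $\Lambda_{pq}\Lambda'_{pq}=\s(Q_{pq})$ on homology; combined with the first relation this is equivalent to $\Lambda_{pq}^{2}+\s(P)\Lambda_{pq}=\s(Q_{pq})$, in agreement with the cohomological identity $\lambda_{pq}^{2}+w_{2}(\uE_{q})\lambda_{pq}=0$.

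The main obstacle is this last bubbling computation: one must pin down the local moduli model at an interior point of a singular edge, verify that (with the local-coefficient weights) it contributes exactly the two monomials $T_{m}^{2}$ and $T_{m}^{-2}$ and nothing more, and track the orientations, the gluing parameter at the bubble point, and the framing-dependent $T_{0}$-exponent of \eqref{eq:T-cobordism}. Once the parallel computation identifying $w_{2}$ with $\s(P)$ from \cite{KM-deformation} is imported and adapted to carry the enlarged local system $\Gamma_{\s}$, this should be a local and, in structure, routine argument.
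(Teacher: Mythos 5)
Your treatment of the additive relation is essentially the paper's: pass from $\lambda_{pq}+\lambda'_{pq}=w_{2}(\uE_{q})$ to operators and invoke Lemma~\ref{lemma:w-relations}. That part is fine (the paper also first reduces by excision to $\omega=\emptyset$ and $\s=\mathrm{id}$, which you omit but which is harmless).

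For the product relation, however, there is a genuine gap exactly where you yourself flag ``the main obstacle.'' You correctly set up the structure --- the operator attached to $\lambda_{pq}\lambda'_{pq}$ vanishes, and the discrepancy between it and the composite $\Lambda_{pq}\Lambda'_{pq}$ is a sum of universal local contributions from bubbling at $p$ and at $q$ --- but the entire quantitative content of the lemma is the claim that this universal contribution is $T_{m}^{2}+T_{m}^{-2}$ and nothing else, and your argument for that is an assertion about ``a standard local model whose two ends the local system distinguishes, assigning them weights whose exponent of the edge variable $T_{m}$ differs by $\pm 2$.'' Nothing in the proposal rules out additional terms (a constant term, say, or different exponents), and the heuristic as stated does not even match the answer: the exponents of the two monomials $T_{m}^{2}$ and $T_{m}^{-2}$ differ by $4$, not $2$. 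So the value of $F$ is not established.

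The paper avoids the direct local computation entirely. Writing $\Lambda_{pq}\Lambda'_{pq}=F(T_{m_{p}})+F(T_{m_{q}})$ with $F$ a universal one-variable Laurent polynomial, it specializes to $K=\emptyset$ with $p,q$ on the edges $e_{2},e_{1}$ of $\theta$. There the instanton homology is free of rank one, so $\Lambda$ and $\Lambda'$ are honest elements of $\F_{2}[T_{1}^{\pm1},T_{2}^{\pm1},T_{3}^{\pm1}]$ satisfying $\Lambda+\Lambda'=P$ (the already-proved first relation) and $\Lambda\Lambda'=F(T_{1})+F(T_{2})$. The requirement that the product be independent of $T_{3}$ forces $\Lambda=T_{3}^{\pm1}G(T_{1},T_{2})$ and $\Lambda'=T_{3}^{\mp1}H(T_{1},T_{2})$, and comparison with the four monomials of $P$ then pins down $\Lambda$ and $\Lambda'$, hence $F(T)=T^{2}+T^{-2}$, by pure algebra. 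If you want to complete your proof along your own lines you must either carry out the local bubbling computation in full (tracking the local coefficient weights at the codimension-four bubble), or import this indirect determination of $F$; as written, the lemma's second identity is not proved.
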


\begin{proof}
    By an excision argument \cite{KM-unknot}, it is sufficient to prove this in the
    case that $\omega$ is empty. It is then sufficient to consider the
    case that the base-change $\s$ is the identity.

    The first of the two relations follows from the corresponding
    formula for $\lambda_{pq} + \lambda'_{pq}$ in
    Lemma~\ref{lemma:lambda-relations-cohomology}, together with the
    formula  $w_{2}=P$ from Lemma~\ref{lemma:w-relations}.

    The second relation, for the product, is more subtle, because it
    involves a 4-dimensional moduli space, and there is a contribution
    from codimension-4 bubbling which may occur at the endpoints $p$
    and $q$ of the arc $a \subset Z$.

    As in \cite{Yi} and \cite{KM-deformation}, the contribution from
    the bubbles at $p$ and $q$ are universal quantities, so that the
    relation for the product has the general shape
    \[
                   \Lambda_{pq} \Lambda'_{pq} = F(T_{m_{p}}) +
                   F(T_{m_{q}})
      \]
    where $F$ is universal and is a finite Laurent series in one variable. To
    compute $F$, we take as a special case that situation that $K$ is
    empty
    and $p$ and $q$ lie on the edges $e_{2}$ and $e_{1}$ of
    $\theta$, respectively.  In the
    ring $\F_{2}[T_{1}^{\pm 1}, T_{2}^{\pm 1}, T_{3}^{\pm 1}]$ then, we have elements
    $\Lambda$ and $\Lambda'$ with relations
    \[
    \begin{aligned}
        \Lambda  + \Lambda' &= P \\
        &= T_{1}T_{2}T_{3} + T_{1}T_{2}^{-1}T_{3}^{-1} +
        T_{2}T_{3}^{-1}T_{1}^{-1} + T_{3}T_{1}^{-1}T_{2}^{-1}
    \end{aligned}
   \] 
    and
   \[
              \Lambda \Lambda' = F(T_{2}) + F(T_{1}).
    \] 
     The only way to solve the constraint that $\Lambda\Lambda'$ is
     function of $T_{1}$ and $T_{2}$ only is to have the general shape
     \[
     \begin{aligned}
         \Lambda  & = T_{3}^{a} G(T_{1}, T_{2}) \\
         \Lambda'& = T_{3}^{-a} H(T_{1}, T_{2}),
     \end{aligned}
\]
    for some Laurent polynomials $G$ and $H$ in two variables.
    The shape of the formula for $\Lambda+\Lambda'$ tells us that $a$
    must be $\pm 1$ and that $\Lambda$ and $\Lambda'$ must consist of
    the corresponding monomials from the formula for $P$. Thus
     \begin{equation}\label{eq:local-lambdas}
     \begin{aligned}
         \Lambda  & = T_{3}(T_{1}T_{2} + T_{1}^{-1}T_{2}^{-1}) \\
         \Lambda'& = T_{3}^{-1}(T_{1}^{-1}T_{2} + T_{1}T_{2}^{-1})
     \end{aligned}
     \end{equation} 
   or vice versa. Either way, we have $F(T)= T^{2} + T^{-2}$.
\end{proof}

As with the cohomology classes themselves, we have an immediate
corollary of the lemma, for the operator $\Lambda_{pq}$:

\begin{corollary}\label{cor:Lambda-reln}
    The operator $\Lambda_{pq}$ satisfies the relation
    \[
                      \Lambda^{2}_{pq} +\s( P) \Lambda_{pq} + \s(Q_{pq}) = 0.
    \]    
\end{corollary}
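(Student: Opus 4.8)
The plan is to derive Corollary~\ref{cor:Lambda-reln} purely algebraically from the two relations in Lemma~\ref{lemma:Lambda-relations}, since this is exactly the analogue of the passage from Lemma~\ref{lemma:lambda-relations-cohomology} to its own corollary at the cohomology level. First I would set $p=\Lambda_{pq}$ and $p'=\Lambda'_{pq}$, noting that by Lemma~\ref{lemma:Lambda-relations} these commuting operators satisfy $p+p'=\s(P)$ and $pp'=\s(Q_{pq})$. Then I would substitute $p'=\s(P)+p$ (using characteristic $2$, so that $-1=+1$ and signs may be ignored) into the product relation to obtain
\[
    p\bigl(\s(P)+p\bigr)=\s(Q_{pq}),
\]
which rearranges to $p^{2}+\s(P)p+\s(Q_{pq})=0$ because again the sign is irrelevant in characteristic $2$. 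This is precisely the asserted relation $\Lambda_{pq}^{2}+\s(P)\Lambda_{pq}+\s(Q_{pq})=0$.

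The only point requiring any care is that the manipulation takes place among honest operators on $\Isharp(K;\Gamma_{\s})_{\omega}$, so I should record that $\Lambda_{pq}$ and $\Lambda'_{pq}$ commute with each other (they are induced by cup products with the commuting cohomology classes $\lambda_{pq}$ and $\lambda'_{pq}$, composed in the standard way described in Section~\ref{subsec:Lambda-class}), and that $\s(P)$ and $\s(Q_{pq})$ act as scalars, i.e.\ as multiplication operators by elements of $\cS$, which lie in the centre of the endomorphism ring. Given this, the substitution $p'=\s(P)+p$ and subsequent simplification are legitimate identities of operators. There is genuinely no obstacle here: the corollary is a one-line consequence, and the substantive content — in particular the identification of the universal Laurent series $F(T)=T^{2}+T^{-2}$ and hence the formula for $Q_{pq}$, together with the computation $w_{2}=\s(P)$ — has already been carried out in Lemma~\ref{lemma:Lambda-relations} and Lemma~\ref{lemma:w-relations}. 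So the proof is simply: combine the two displayed relations of Lemma~\ref{lemma:Lambda-relations}, working in characteristic $2$, exactly as one combines the two relations of Lemma~\ref{lemma:lambda-relations-cohomology}.
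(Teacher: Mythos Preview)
Your proof is correct and matches the paper's approach: the paper likewise presents this as an immediate algebraic consequence of the two relations in Lemma~\ref{lemma:Lambda-relations}, exactly paralleling the passage from Lemma~\ref{lemma:lambda-relations-cohomology} to its corollary. One small note: you do not actually need $\Lambda_{pq}$ and $\Lambda'_{pq}$ to commute, since the substitution $\Lambda'_{pq}=\s(P)+\Lambda_{pq}$ already gives $\Lambda_{pq}\Lambda'_{pq}=\s(P)\Lambda_{pq}+\Lambda_{pq}^{2}$ using only that $\s(P)$ is central.
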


To create an operator that treats the three edges of $\theta$
symmetrically, we make the following definition.

\begin{definition}\label{def:Lambda3}
    Fix once and for all a dot $p_{m}$ on each edge of  $e_{m}$ of the
    $\theta$, for $m=1,2,3$. Then, given a dot
    $q$ on the link $K$, we define
    \[
        \Lambda_{q} = \Lambda_{p_{1}q} + \Lambda_{p_{2}q} +
        \Lambda_{p_{3}q} ,
    \]
    with $\Lambda'_{q}$ defined similarly.
\end{definition}

\begin{corollary}\label{cor:norm-Lambda-reln}
    The operator $\Lambda_{q}$ above satisfies the relation
    \[
                      \Lambda^{2}_{q} + \s(P) \Lambda_{q} +\s( Q) = 0,
    \]     
    where $P$ and $Q$ are given by the formulae in
    \eqref{eq:PQ-formulae}. Furthermore
    $\Lambda_{q} + \Lambda'_{q}= \s(P)$.
\end{corollary}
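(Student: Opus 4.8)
The plan is to deduce Corollary~\ref{cor:norm-Lambda-reln} from Corollary~\ref{cor:Lambda-reln} and Lemma~\ref{lemma:Lambda-relations} by a direct algebraic manipulation, using the key fact that operators $\Lambda_{p_i q}$ arising from \emph{disjoint} arcs commute with one another (they are constructed by counting instantons meeting independently chosen codimension-2 subvarieties $V_i$, and a standard transversality argument makes the relevant intersections disjoint, so the operators commute on Floer homology, just as in \cite{KM-deformation}). First I would record, from Lemma~\ref{lemma:Lambda-relations} applied to the pairs $\{p_i, q\}$ for $i=1,2,3$, the relations
\[
\Lambda_{p_i q} + \Lambda'_{p_i q} = \s(P), \qquad
\Lambda_{p_i q}\,\Lambda'_{p_i q} = \s\!\left( (T_{i}^{2}+T_{i}^{-2}) + (T_{m_q}^{2}+T_{m_q}^{-2}) \right),
\]
using that $p_i$ lies on edge $e_i$ so $m_{p_i}=i$, together with the relation $\Lambda_{p_i q} + \Lambda_{p_j q} = \Lambda'_{p_j q} + \Lambda'_{p_i q}$ that follows by subtracting two instances of the first identity.

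Next I would compute $\Lambda_q + \Lambda'_q = \sum_i (\Lambda_{p_i q} + \Lambda'_{p_i q}) = 3\,\s(P) = \s(P)$ in characteristic $2$, which is the final assertion of the corollary. For the quadratic relation, set $\Lambda_q = \sum_i \Lambda_{p_i q}$ and expand $\Lambda_q^2 = \sum_i \Lambda_{p_i q}^2 + 2\sum_{i<j}\Lambda_{p_i q}\Lambda_{p_j q} = \sum_i \Lambda_{p_i q}^2$ again using characteristic $2$ (here commutativity lets me write the cross terms symmetrically, though they then vanish). By Corollary~\ref{cor:Lambda-reln} each square satisfies $\Lambda_{p_i q}^2 = \s(P)\Lambda_{p_i q} + \s(Q_{p_i q})$, so
\[
\Lambda_q^2 = \s(P)\sum_i \Lambda_{p_i q} + \sum_i \s(Q_{p_i q}) = \s(P)\Lambda_q + \sum_{i=1}^{3}\s\!\left( (T_i^2 + T_i^{-2}) + (T_{m_q}^2 + T_{m_q}^{-2}) \right).
\]
The sum on the right is $\s\!\left( \sum_{i=1}^3 (T_i^2 + T_i^{-2}) + 3(T_{m_q}^2 + T_{m_q}^{-2}) \right)$, which in characteristic $2$ equals $\s\!\left( \sum_{i=1}^3 (T_i^2+T_i^{-2}) + (T_{m_q}^2 + T_{m_q}^{-2}) \right)$; since $q$ lies on $K$ we have $m_q = 0$, and this is precisely $\s\!\left(\sum_{j=0}^3 (T_j^2 + T_j^{-2})\right) = \s(Q)$. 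Hence $\Lambda_q^2 + \s(P)\Lambda_q + \s(Q) = 0$, as claimed.

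The only real subtlety is the commutativity of the $\Lambda_{p_i q}$, which is what licenses the clean expansion of $\Lambda_q^2$; I expect this to be the main point needing justification, and I would handle it exactly as the analogous commutation statements are handled in \cite{KM-deformation}—choosing the arcs $a_i$ from $p_i$ to $q$ to be disjoint away from a neighbourhood of $q$, representing the classes $\lambda_{p_i q}$ by subvarieties supported near these disjoint arcs, and invoking the standard dimension count that shows the induced chain maps commute up to chain homotopy. (In fact, even without commutativity the argument goes through: in characteristic $2$ one has $\Lambda_q^2 = \sum_i \Lambda_{p_i q}^2 + \sum_{i<j}(\Lambda_{p_i q}\Lambda_{p_j q} + \Lambda_{p_j q}\Lambda_{p_i q})$, and one only needs the mixed terms to cancel in pairs, which they do once the operators commute; commutativity is the cleanest route and is in any case available.) Everything else is bookkeeping with the explicit formulae for $P$ and $Q_{pq}$ and repeated use of $2=0$.
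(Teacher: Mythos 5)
Your proof is correct and takes essentially the same route as the paper, whose entire argument is that squaring is additive in characteristic $2$ (implicitly using that the $\Lambda_{p_iq}$ commute, as they arise from even-degree cohomology classes cut down in separate time slices) and that the relevant $Q$ is the sum of the three terms $Q_{p_mq}$. Your explicit handling of the cross terms, the commutativity point, and the bookkeeping with $m_{p_i}=i$ and $m_q=0$ is exactly what the paper leaves implicit.
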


\begin{proof}
    We are in characteristic $2$, where squaring is linear. The $Q$
    that appears in the quadratic relations is now the sum of the
    terms $Q_{p_{m} q}$ for $m=1,2,3$. 
\end{proof}

\subsection{Surfaces with dots}
\label{subsec:surfaces-with-dots}

The operators $\Lambda_{q}$ on $\Isharp(K;\Gamma_{\s})_{\omega}$ that we have
defined can be combined -- in the usual way -- 
with the functorial maps obtained from cobordisms $S$ between knots and
links. Thus, suppose we are given a cobordism $(X,S)$ from
$(Y_{0},K_{0})$ to $(Y_{1}, K_{1})$ as in \eqref{eq:functorial}, and
let $q$ be  a  dot on $S$. As before, this means a point with a choice
of orientation $o_{q}$ of the line bundle $\mathbb{L}_{q}$. We then obtain a map
  \begin{equation}\label{eq:functorial-dot}
     \Isharp((S;q) ; \Gamma_{\s})_{\omega} : \Isharp(  K_{0}; \Gamma_{\s})_{\omega_{0}} \to \Isharp( K_{1}; \Gamma_{\s})_{\omega_{1}}.
\end{equation}
If $q$ can be joined by a path on $S$ to a point $q_{0}\in K_{0}$
(respectively, a point $q_{1}\in K_{1}$),
then this map is equal to the composite,
\[
                \Isharp(S ; \Gamma_{\s})_{\omega} \circ \Lambda_{q_{0}},
\]
respectively
\[
               \Lambda_{q_{1}} \circ  \Isharp(S ; \Gamma_{\s})_{\omega}.
\]
The functorial properties of $\Isharp$ extend to this larger category
in which the morphisms are ``cobordisms with dots''. We note that, as
with the case of dots on a link $K$, an orientation of the line bundle
$\mathbb{L}_{q}$ is equivalent to a choice of orientation for a
neighborhood of $q$ in $S$. So a dot can be regarded as point in $S$
together with an orientation of $T_{q}S$.

\section{Double points and handles}

As in section~\ref{subsec:functoriality}, let $Y_{0}$ and $Y_{1}$ be 3-manifolds with basepoints, containing
links $K_{0}$, $K_{1}$, and let  $\omega_{0}$, $\omega_{1}$ be
representatives for the Stiefel-Whitney class. We continue to use
$\Gamma_{\s}$ to denote $\Gamma \otimes_{\s} \cS$, and we consider again a
map
\begin{equation}\label{eq:S-map}
   \Isharp(X, S; \Gamma_{\s})_{\omega} : \Isharp(K_{0};
   \Gamma_{\s})_{\omega_{0}} \to \Isharp(K_{1} ; \Gamma_{\s})_{\omega_{1}}
\end{equation}
arising from a bifold cobordism $(X,S)$ and a choice of
Stiefel-Whitney class represented by a surface $\omega$ with
boundary. We continue to assume that $S$ is a surface rather than a more
general foam, and we recall that $\omega$ is allowed to have part of its
boundary on  $S$ if $\s(T_{0})=1$ . Implicit in our notation is an embedding of $[0,1]\times
B^{3}$ in $X$, containing the cylindrical foam $[0,1]\times \theta$, disjoint from
$S$ and $\omega$.
               
As in \cite{KM-unknot, KM-jsharp, KM-deformation}, we can consider how
the map $\Isharp(X,S ; \Gamma_{\s})_{\omega}$ changes when we modify
the topology of $S$ in standard
ways.

\subsection{Connect sum with \texorpdfstring{$\RP^{2}$}{RP2}}

In $S^{4}$, there are two standard copies of $\RP^{2}$ (see
\cite{KM-unknot} for example), which we call
$R_{+}$ and $R_{-}$. These have self-intersection numbers
\[
\begin{aligned}
    R_{+} \cdot R_{+} &= +2 \\
    R_{-} \cdot R_{-} &= -2 .
\end{aligned}
\]
From $(X,S)$ we can form a new cobordism as a connected sum,
\[
          (X,\tilde S)=  (X,S) \csum (S^{4}, R_{\pm }).
\]

\begin{lemma}
    In the case $\tilde S = S\csum R_{+}$, or $\tilde S = S \csum R_{-}$, we have
\[
        \Isharp(X,\tilde S ; \Gamma_{\s})_{\omega} = 0.
    \]
\end{lemma}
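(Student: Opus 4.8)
The plan is to reduce the statement, using functoriality, to a local assertion about a cobordism supported near the connect-sum region, and then to invoke the formal-dimension count that already yields the corresponding vanishing with constant coefficients in \cite{KM-unknot}, checking along the way that the local system $\Gamma_{\s}$ contributes nothing.

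First I would localise the connected sum. Since the summing point lies in the interior of $S$, the summand $(S^{4},R_{\pm})$ may be isotoped into a collar $[0,1]\times Y_{1}$ of the outgoing end, disjoint both from $\omega$ and from the cylindrical $\theta$-region that is implicit in the notation. By the composition law for $\Isharp(-;\Gamma_{\s})_{\omega}$, the map $\Isharp(X,\tilde S;\Gamma_{\s})_{\omega}$ then factors as $\Theta_{\pm}\circ \Isharp(X,S;\Gamma_{\s})_{\omega}$, where $\Theta_{\pm}\colon \Isharp(K_{1};\Gamma_{\s})_{\omega_{1}}\to \Isharp(K_{1};\Gamma_{\s})_{\omega_{1}}$ is the operator induced by the cylindrical cobordism $([0,1]\times Y_{1},\,([0,1]\times K_{1})\csum (S^{4},R_{\pm}))$. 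It is then enough to prove $\Theta_{\pm}=0$.

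Next I would set up the excision. Deleting from $(S^{4},R_{\pm})$ a small ball meeting $R_{\pm}$ in a standard disk leaves a pair $(B^{4},D_{\pm})$, where $D_{\pm}$ is a M\"obius band properly embedded in $B^{4}$ with boundary an unknot $U\subset S^{3}$ and normal self-intersection $\pm 2$; thus $([0,1]\times K_{1})\csum (S^{4},R_{\pm})$ is obtained by capping $([0,1]\times K_{1})\setminus(\text{disk})$ with $D_{\pm}$ along $(S^{3},U)$. Stretching the neck along this $(S^{3},U)$, the moduli spaces that compute the matrix entries of $\Theta_{\pm}$ split as fibre products, over the flat bifold connections on $(S^{3},U)$, of moduli on $(B^{4},D_{\pm})$ with moduli on the complement. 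This is exactly the configuration analysed in \cite{KM-unknot} (and, in the foam formulation, in \cite{KM-jsharp}); crucially, the $\theta$-marking and the locus $\omega$ lie entirely on the complementary side, so the instanton weights $\epsilon([A])\in\cS$ are pulled back from that side.

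The crux is the formal-dimension count on the $R_{\pm}$-side, which I would take over verbatim from \cite{KM-unknot}: for each relevant flat limit on $(S^{3},U)$, the moduli space of perturbed ASD connections on $(B^{4},D_{\pm})$ with that asymptotic value has no isolated irreducible point, so on the glued cobordism a degree-zero operator would require a negative-dimensional moduli space on the complementary side. Hence every matrix entry of the chain map representing $\Theta_{\pm}$ is a count over an empty (or positive-dimensional) space; no weight $\epsilon([A])$ ever enters, and the passage from constant coefficients to $\Gamma_{\s}$ changes nothing. The chain map is therefore identically zero, so $\Theta_{\pm}=0$, and the same argument applies to both $R_{+}$ and $R_{-}$. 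The main obstacle, such as it is, is precisely this last step --- confirming that the $\RP^{2}$-side never contributes an isolated point --- but that is exactly the content of the constant-coefficient case already settled in \cite{KM-unknot}. (One could alternatively try to recognise $\Theta_{\pm}$ as a universal polynomial in the operators $w_{2}$ and $\Lambda_{q}$ of Section~\ref{subsec:Lambda-class} and deduce its vanishing from Corollary~\ref{cor:norm-Lambda-reln}, but the excision argument is the cleaner route and needs no new input.)
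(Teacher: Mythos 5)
Your overall framework (localise the sum to a collar, stretch the neck along the standard pair $(S^{3},S^{1})$, and analyse weak limits on the $(S^{4},R_{\pm})$ side) matches the paper's, but the mechanism you give for the vanishing is not the right one, and as stated your argument proves too much. The contributions in this kind of connected sum do \emph{not} come from isolated irreducible points on the $R_{\pm}$ side: because the gluing parameter is a circle, the only weak limits that can contribute are anti-self-dual bifold connections on $(S^{4},R_{\pm})$ with (at least) $S^{1}$ stabilizer, i.e.\ reducibles. Since these orbifolds carry no non-zero anti-self-dual harmonic $2$-forms, such a reducible must be flat, and there \emph{is} exactly one flat bifold connection on $(S^{4},R_{\pm})$ (the complement has fundamental group $\Z/2$). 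It genuinely contributes: that is precisely why Lemma~\ref{lem:csum-R-theta} gives a non-zero answer (the identity for $R_{+}$, multiplication by $\s(P)$ for $R_{-}$). A formal-dimension count that "never produces an isolated point on the $\RP^{2}$ side,'' taken over verbatim, would apply equally to the Stiefel--Whitney class $\omega+\pi$ and would contradict that lemma.

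The missing idea is the Stiefel--Whitney class of this unique flat limit: it is dual to the disk $\pi$, hence non-trivial on the $R_{\pm}$ side. Since $\omega$ is disjoint from the sum region, the marking data $\mu_{\omega}$ forces any weak limit on $(S^{4},R_{\pm})$ to have trivial $w_{2}$ there, so the flat connection is excluded, no admissible weak limit exists, and the glued moduli spaces are empty for long necks. That $w_{2}$ obstruction --- not a dimension count against irreducibles --- is what makes $\Isharp(X,\tilde S;\Gamma_{\s})_{\omega}$ vanish while $\Isharp(X,\tilde S;\Gamma_{\s})_{\omega+\pi}$ does not. Your closing remark that the local system contributes nothing is fine once the vanishing is established at the level of moduli spaces, but the proof needs to identify and then rule out the flat reducible limit by its Stiefel--Whitney class.
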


\noindent
We postpone the proof until after the statement of the next lemma.

If $\s(T_{0})=1$, then we can use more general representatives for classes
$w_{2}$. In particular, a circle representing the generator of
$H_{1}(R_{\pm})$ 
bounds a disk in the complement of $R_{\pm}$ in $S^{4}$. Let us write
$\pi$ for this disk. It represents a non-zero mod-2 class in the homology
of the complement. In the complement of $\tilde S = S \csum R_{\pm}$,
we can then use the Stiefel-Whitney class represented by $\omega+\pi$.

\begin{lemma}\label{lem:csum-R-theta}
    Suppose that $\s(T_{0})=1$ in the ring $\cS$. Then
    in the case that $\tilde S = S\csum R_{+}$, we have
\[
         \Isharp(X,\tilde S ; \Gamma_{\s})_{\omega+\pi} = \Isharp(X,S;\Gamma_{\s})_{\omega}.
\]
    In the case that $\tilde S = S\csum R_{-}$, we have
\[
         \Isharp(X,\tilde S ; \Gamma_{\s})_{\omega+\pi} = \s(P)\, \Isharp(X,S;\Gamma_{\s})_{\omega},
\]
where $P \in \cR$ is the element given by the formula \eqref{eq:PQ-formulae}.
\end{lemma}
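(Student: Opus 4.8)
The plan is to analyze the connect-sum with $R_{\pm}$ by reducing to a computation over the standard pair $(S^4, R_{\pm})$, in the presence of the extra $w_2$-representative $\pi$. The key point, exactly as in \cite{KM-unknot}, is that a connect-sum cobordism $(X, S\csum R_{\pm})$ with $\omega + \pi$ factors as a composition: first an excision/neck-stretching argument identifies the map $\Isharp(X, \tilde S; \Gamma_\s)_{\omega+\pi}$ with the composite of $\Isharp(X,S;\Gamma_\s)_\omega$ and the ``local'' map associated to inserting $(S^4, R_{\pm})$ together with the disk $\pi$ in a small ball. So the whole content is to evaluate the endomorphism of $\Isharp(\mathrm{unknot}; \Gamma_\s)$ (or more precisely of $\Isharp(\mathrm{pt}; \Gamma_\s)$, the local module attached to a single facet of $S$) determined by the standard closed-up pair $(S^4, R_\pm)$ with $\omega$-class $\pi$.

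First I would set up this local picture carefully: cut out a ball meeting $S$ in a disk and $\omega$ in an arc or circle, so that the relevant chain-level map is a sum over instantons on the cylinder capped by the standard $(S^4, R_{\pm})$-with-$\pi$, weighted via the local system $\Gamma_\s$ by the formula \eqref{eq:T-cobordism}. Because $\s(T_0)=1$, the holonomy-of-$K$ factor $h_0$ plays no role, so the local system restricted to this region is pulled back from the $\theta$-edges, exactly the $\Gamma_\theta$ situation treated in \cite{KM-deformation}. Next I would recall that $R_+\cdot R_+ = +2$ and $R_-\cdot R_- = -2$, and that passing from $R_-$ to $R_+$ amounts to composing with a $(-2)$-framing change on the relevant facet of $S$. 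From section~\ref{subsec:functoriality}, changing the framing of a facet of $S$ by $2$ multiplies the functorial map by $T_0^{1}$ (i.e.\ $T_0$ to the power ``half the difference of framings''); but here $\s(T_0)=1$, so that framing-change factor is $1$. Therefore the $R_+$ and $R_-$ insertions differ by a factor of the weight contributed by the presence of $\pi$ interacting with the $R_-$-vs-$R_+$ surface — which I expect to reorganize into precisely the $\Lambda$-type operators studied in Lemma~\ref{lemma:Lambda-relations}.

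More concretely, the presence of the disk $\pi$ dual to a $w_2$-class means the associated $\SU(2)$ connection has holonomy $-1$ around $\partial\pi$, which (as in Lemma~\ref{lemma:bonf}) changes the limiting holonomy data at the facet of $S$ where the connect-sum occurs. In \cite{KM-unknot} the $R_+$ connect-sum is shown to be the identity and the $R_-$ connect-sum to be (a constant times) multiplication by the relevant two-dimensional operator class; in the present local-coefficient setting that operator class is the $w_2$-class of the basepoint/facet bundle, which by Lemma~\ref{lemma:w-relations} is multiplication by $\s(P)$. So the strategy is: (i) establish the $R_+$ case — the standard $(S^4, R_+)$ with $\pi$ gives a nullhomotopy-free, degree-matching cobordism whose moduli space is a single point with weight $1$, hence the map is the identity and $\Isharp(X,\tilde S;\Gamma_\s)_{\omega+\pi} = \Isharp(X,S;\Gamma_\s)_\omega$; (ii) deduce the $R_-$ case by composing with the reflection that turns $R_+$ into $R_-$, which replaces the identity by the operator $w_2(\uE_q)$ of the facet, equal to $\s(P)$ by Lemma~\ref{lemma:w-relations}. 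The dimension/index bookkeeping that makes the $R_+$ moduli space zero-dimensional and the $R_-$ moduli space two-dimensional (so that it pairs with a codimension-$2$ class) is the same as in \cite{KM-unknot}, adjusted only by the $\Gamma_\s$-weights, which are all trivial here because $\s(T_0)=1$ kills the only potentially nontrivial contribution.

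The main obstacle I anticipate is bookkeeping the local system weights and the framing/self-intersection corrections in formula \eqref{eq:T-cobordism} carefully enough to see that no stray monomial $T_i^{\pm k}$ survives in the $R_+$ case and that the $R_-$ case produces exactly $\s(P)$ and not some other element of $\cS$ — in particular checking that the reflection relating $R_\pm$ does not introduce a sign or a $T_0$-power once we have imposed $\s(T_0)=1$. This is precisely the point where one must invoke the hypothesis $\s(T_0)=1$: without it the class $\pi$ (whose boundary circle links $K$) would not be an admissible $\omega$-modification, and the framing-change factor $T_0$ would obstruct the clean identification. Granting the excision reduction and the index computations from \cite{KM-unknot}, the remaining verification is a short and essentially formal weight computation, entirely parallel to the analogous lemma in \cite{KM-unknot} and to the potential-vanishing discussion already recalled in section~\ref{subsec:functoriality}.
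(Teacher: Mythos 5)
Your proposal follows essentially the same route as the paper: stretch the neck along $(S^{3},S^{1})$, observe that the only possible weak limit on $(S^{4},R_{\pm})$ is the unique flat bifold connection, whose $w_{2}$ is dual to $\pi$ (this is what kills the $\omega$ case and selects $\omega+\pi$), note that this limit is unobstructed for $R_{+}$ (giving the identity) and has a two-dimensional obstruction for $R_{-}$, and evaluate the resulting cut-down by $w_{2}$ as $\s(P)$ via Lemma~\ref{lemma:w-relations}. Your identification of the role of $\s(T_{0})=1$ (needed because $\partial\pi$ lies on $\tilde S$) is also the right one. The one step I would not let stand as written is ``deduce the $R_{-}$ case by composing with the reflection that turns $R_{+}$ into $R_{-}$'': there is no orientation-preserving identification of $(S^{4},R_{+})$ with $(S^{4},R_{-})$ (their self-intersections differ), and the reflection reverses orientation and so does not carry anti-self-dual connections to anti-self-dual connections; likewise the ``$(-2)$-framing change'' remark conflates a framing change of a facet of $S$ with the genuinely different surfaces $R_{\pm}$. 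The paper instead treats the two cases independently, identifying the two-dimensional gluing obstruction for $R_{-}$ with the $2$-plane bundle $\eta$ and cutting down by $w_{2}(\eta)=\s(P)$ — which is the mechanism you in fact invoke in your dimension-count sentence, so replacing the reflection shortcut by that direct obstruction-bundle computation closes the argument.
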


\begin{proof}[Proof of the two lemmas.]
    There are four assertions altogether: two surfaces $R_{\pm}$, and
    two choices of Stiefel-Whitney class. In each case, we have a
    connected sum with $(S^{4}, R_{\pm})$ along $(S^{3}, S^{1})$. We
     apply the usual stretching argument, and we consider the possible
     weak limit on $(S^{4}, R_{\pm})$. The
    gluing parameter in the connected sum is $S^{1}$, so we will have
    non-zero contributions only when the weak limit on $(S^{4},
    R_{\pm})$ is an anti-self-dual connection with $S^{1}$
    stabilizer. However, there are no non-zero harmonic 2-forms on these
    orbifolds, so the only possibility is a flat connection. There is a
    unique flat $SO(3)$ bifold connection $[A_{\pm}]$ on $(S^{4}, R_{\pm})$ because the fundamental
    group of the complement is cyclic of order $2$. Its
    Stiefel-Whitney class is represented by $\pi$. This proves the
    first lemma: there is no contribution for the Stiefel-Whitney
    class $\omega$.

    The anti-self-dual connection $[A_{\pm}]$ is unobtructed in the
    case of $R_{+}$ and has a 2-dimensional obstruction space in the
    case of $R_{-}$, as explained in \cite[section 2.7]{KM-unknot}. So
    for $R_{+}$ we have
   \[
              \Isharp(X,S \csum R_{+} ; \Gamma_{\s})_{\omega+\pi} = \Isharp(X,S;\Gamma_{\s})_{\omega}.
   \]
  In the case of $R_{-}$, we can identify the 2-dimensional gluing
  obstruction with the 2-plane bundle $\eta$, and the effect of gluing
  is the same as cutting down by
  $w_{2}(\eta)$. Lemma~\ref{lemma:w-relations} tells us this is
  multiplication by $\s(P)$.
\end{proof}

\subsection{Connect sum with \texorpdfstring{$T^{2}$}{T2}}
\label{subsec:sum-T2}

Let $T$ be a standard unknotted torus in $\R^{3}$, and regard $T$ by
inclusion as a submanifold of $S^{4}$. We may form a connected sum
\[
                (X, \tilde S) = (X,S) \csum (S^{4}, T).
\]

\begin{lemma}\label{lem:torus-sum}
    When $(X,\tilde S)$ is formed from $(X,S)$ by a connected sum with
    the standard torus $T$ as above, we have
\[
    \Isharp(X,\tilde S ; \Gamma_{\s})_{\omega} = 
   \s( P)\, \Isharp(X,S;\Gamma_{\s})_{\omega}.
\]
\end{lemma}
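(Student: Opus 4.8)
The plan is to run the same neck-stretching argument used in the proof of the $\RP^2$ lemmas, but now analyzing the possible weak limits on the closed side $(S^4, T)$, where $T$ is the standard unknotted torus. First I would recall that the complement of the standard unknotted torus in $S^4$ has fundamental group $\Z$, so there are flat $\SO(3)$ bifold connections on $(S^4,T)$ parametrized by the holonomy around the meridian; since the isotropy group along $T$ is $\Z/2$, the relevant flat connection is unique up to gauge (the nontrivial one along the meridian), and its Stiefel--Whitney class is trivial as an honest cohomology class on the complement — so the stretching picks up a contribution for the given $\omega$ (contrast with the $\RP^2$ case where only $\omega+\pi$ survived). The gluing parameter is again $S^1$ (the meridian circle), so only weak limits with $S^1$ stabilizer contribute, and as before the only anti-self-dual connections on the orbifold with $S^1$ stabilizer are flat, because the orbifold $(S^4,T)$ carries no nonzero $L^2$ harmonic $2$-forms.

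Next I would identify the obstruction/gluing data for this flat connection. The key point is the deformation complex of the reducible flat connection $[A_T]$ on $(S^4,T)$: its reducibility to $S^1$ along $T$ means the associated $\R^3$-bundle splits as $\xi\oplus\eta$ with $\xi$ a real line bundle and $\eta$ a $2$-plane bundle over (a neighborhood of) $T$, and the obstruction space to gluing is identified with $H^+$ of $(S^4,T)$ twisted in $\eta$. For the standard torus this obstruction space is $2$-dimensional — coming from $H^1(T;\R)$ being $2$-dimensional while the twisting makes the relevant cohomology concentrated there — so gluing cuts the map $\Isharp(X,S;\Gamma_\s)_\omega$ down by $w_2(\eta)$ evaluated on the gluing region, exactly as in the $R_-$ case. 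Then Lemma~\ref{lemma:w-relations} (or rather the computation $w_2=\s(P)$ underlying it, applied at the vertices of $\theta$ in the standard ball near the connect-sum region) identifies this cutting-down operation with multiplication by $\s(P)$, giving the stated formula.

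The step I expect to be the main obstacle is the precise bookkeeping of the obstruction bundle and the verification that the gluing map really is multiplication by $w_2(\eta)$ rather than by some other characteristic class — in particular pinning down that the obstruction space has dimension exactly $2$ and that it is the $2$-plane bundle $\eta$ (and not, say, a sum involving $\xi$) whose $w_2$ appears. This requires a careful index computation for the twisted ASD operator on the orbifold $(S^4,T)$, checking that $\xi$ contributes nothing (consistent with $w_1,w_3$ acting as zero, Lemma~\ref{lemma:w-relations}) and that the cokernel is naturally $H^1(T)\otimes$(a local coefficient), mirroring the analysis of $R_-$ in \cite[section 2.7]{KM-unknot}. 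Once that identification is in hand, the conclusion that the contribution equals $\s(P)\,\Isharp(X,S;\Gamma_\s)_\omega$ is immediate, since by Lemma~\ref{lemma:w-relations} the operator $w_2$ on $\Isharp(K;\Gamma_\s)_\omega$ is precisely multiplication by $\s(P)$; and one should also note briefly that the $\theta$-graph sitting in the fixed ball is unaffected by the connect sum, so the weight bookkeeping for the local system $\Gamma_\s$ is unchanged and no extra monomial factors appear.
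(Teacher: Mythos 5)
Your overall strategy is the paper's: stretch the neck, observe that the only weak limit on $(S^{4},T)$ with positive-dimensional stabilizer is the unique flat bifold connection (which, as you note, has trivial $w_{2}$ on the complement because $\pi_{1}(S^{4}\setminus T)\cong\Z$ is free, so it contributes to the $\omega$ term rather than $\omega+\pi$ as in the $\RP^{2}$ case), and then identify the effect of the two-dimensional gluing obstruction with cutting down by $w_{2}(\eta)=\s(P)$ via Lemma~\ref{lemma:w-relations}. The one step you flag as the main obstacle --- pinning down that the obstruction space is exactly two-dimensional and is the bundle $\eta$ --- is resolved in the paper not by an orbifold index computation but by passing to the branched double cover: the double cover of $S^{4}$ branched along the standard torus is $S^{2}\times S^{2}$, the flat bifold connection pulls back to the trivial bundle, and the obstruction space is $\mathcal{H}^{+}(S^{2}\times S^{2};\tilde E_{-})$, where $\tilde E_{-}$ is the rank-$2$ summand of the trivial bundle on which the covering involution acts as $-1$. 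Since $b^{+}(S^{2}\times S^{2})=1$ and the involution acts by $-1$ on $\mathcal{H}^{+}$, the invariant part of $\mathcal{H}^{+}\otimes\tilde E$ is $\mathcal{H}^{+}\otimes\tilde E_{-}\cong\R^{2}$; the identification of the obstruction bundle with $\eta$ and the reduction to the $S\csum R_{-}$ computation of Lemma~\ref{lem:csum-R-theta} are then immediate.

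Your proposed heuristic for the dimension count --- that the twisted $H^{+}$ is ``concentrated on'' $H^{1}(T;\R)\cong\R^{2}$ --- is not the right mechanism and would mislead you elsewhere: in the $R_{-}$ case the obstruction space is also two-dimensional even though $H^{1}(\RP^{2};\R)=0$. In both cases the two dimensions arise as (self-dual harmonic forms on the double cover) tensored with the rank-$2$ summand of the adjoint bundle on which the relevant involution acts nontrivially, not from the first cohomology of the branch surface. So the gap you identified is genuine, but the fix is the double-cover computation rather than a sharper index theorem on the orbifold itself; with that substitution the rest of your argument goes through as written.
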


\begin{proof}
    As with then previous two proofs, we are forming a sum along
    $(S^{3}, S^{1})$ and non-zero contributions arise from
    anti-self-dual bifold connections on $(S^{4}, T)$. These in turn
    come from reducible anti-self-dual connections on the branched
    double-cover, $S^{2}\times S^{2}$, which are invariant under the
    involution which fixes the torus $S^{1}\times S^{1}$. 
    As in the previous lemma, the contributions come
    only from
    the flat bifold bundle $[E,A]$ on $(S^{4}, T)$ corresponding to
    the trivial bundle $[\tilde E, \tilde A]$ on $S^{2}\times
    S^{2}$, because there are no non-zero harmonic 2-forms on the orbifold. 
    The obstruction space for $[E,A]$ is again
    two-dimensional, because it arises from
    $\mathcal{H}^{+}(S^{2}\times S^{2} ; \tilde E_{-})$, where $\tilde
    E_{-}$ is the two-dimensional summand of the trivial bundle on
    which the involution acts as $-1$. In the gluing, the obstruction
    bundle is again $\eta$, and the calculation is the same as the
    case $S\csum R_{-}$ from Lemma~\ref{lem:csum-R-theta}.
\end{proof}

\subsection{Double points and blowing up}
\label{subsec:double-points}

As in \cite{KM-singular} and \cite{KM-s-invariant}, we can extend the
definition of the maps $\Isharp(X,S;\Gamma_{\s})_{\omega}$ induced by cobordisms to
include also the case that $S$ is a normally immersed surface in
$X$. Our approach in the present paper is a slight variant of what was
done in the two previous cited papers: what we will do here is 
better-adapted to the case of an unoriented surface $S$.

So let $f : S \looparrowright X$ be an ``immersed cobordism'' from
$(Y_{0}, K_{0})$ to $(Y_{1}, K_{1})$. We always assume, as in
\cite{KM-singular}, that $f$ has only transverse double-points, and that
these are in the interior of $X$. That is, the surface is
\emph{normally immersed}. We also assume that the double
points do not lie on the surface $\omega$ which represents $w_{2}$.
We do not want to orient $S$, and we
therefore do not give a sign $\pm 1$ to the double-points of the
immersion. At a double-point $x\in f(S)$, we may choose the metric on
$X$ so that the two branches of the immersion have orthogonal tangent
planes, $\pi$ and $\pi'$ in $T_{x}X$. 
There are then exactly two  complex
structures $J$ and $-J$ on $T_{x}X$ such that:
\begin{enumerate}
\item the complex structure is compatible with the metric and
    orientation of $T_{x}X$;
\item $\pi$ and $\pi'$ are $J$-invariant;
\end{enumerate}
The \emph{blow-up} of $X$ at $x$ with respect to the complex
structures $J$ and $-J$ are canonically identified: in both blow-ups,
the exceptional set $\epsilon\subset \tilde X$ is the set of
$J$-invariant 2-planes in $T_{x}X$. When identified with $\CP^{1}$
however, the complex orientation of the exceptional set is different
in the two cases. The proper transform $\tilde f : \tilde S \looparrowright \tilde X$ has
one fewer double-point than $f$.

In the above situation,  we \emph{define} $\Isharp(X, S ; \Gamma)_{\omega}$ for
the immersed cobordism by requiring
\begin{equation}\label{eq:blow-up-rule}
         \Isharp(X, S ; \Gamma_{\s})_{\omega} = \Isharp(\tilde X, \tilde S
         ; \Gamma_{\s})_{\omega}
        +  \Isharp(\tilde X, \tilde S ; \Gamma_{\s})_{\omega+\epsilon}.
\end{equation}
On the right, we see the proper transform, equipped with two different
Stiefel-Whitney classes, differing by the exceptional set $\epsilon$
of the blow-up. By applying the definition to each double point in
turn, we arrive at a definition that reduces to the standard case of
\emph{embedded} cobordisms.

Before proceeding further, we make some remarks about this
definition. The proper transform is being used here to construct a
functor from a category in which the morphisms are immersed cobordisms
to one in which the morphisms are embedded cobordisms. In the previous
papers \cite{KM-singular, KM-s-invariant,Obstruction}, such a
construction was used with only the first of the two terms on the
right. The reason for using the two terms, involving both $\omega$ and
$\omega+\epsilon$, is to provide a deformation invariance that would
otherwise be absent, in the case that $\omega$ has boundary on $S$. To
understand this, consider a local model for a double point of $S$,
consisting of a pair of disks $D_{1}\cup D_{2}$ in the product
$D_{1}\times D_{2}$, and let coordinates be $(x_{i}, y_{i})$ be
standard coordinates on $D_{i}$, so that the disks meet at the
origin. Let $\omega$ be described in this neighborhood by
\[
       \omega = \{ \, y_{1} = 1/2, \; y_{2} = 0, \; x_{2}\ge 0 \, \}
\]
so that $\partial \omega$ is the line $y_{1}=1/2$ on the disk
$D_{1}$. Let $\omega'$ be obtained from $\omega$ by deforming $\omega$
in the this neighborhood in such a way that, (i) $\partial \omega'$ is the
line $y=-1/2$ on $D_{1}$; and (ii) $\omega'$ intersects $D_{2}$
transversely at a point. Let $(\tilde X, \tilde S$ be obtained by
blowing up the double-point $D_{1}\cap D_{2}$, and regard
$\omega$,$\omega'$ as lying in $\tilde X$. In this situation (assuming
that this local picture is just part of cobordism of pairs), we have
\[
          \Isharp(\tilde X, \tilde S
         ; \Gamma_{\s})_{\omega} \ne \Isharp(\tilde X, \tilde S
         ; \Gamma_{\s})_{\omega'}
\]
in general, because $\omega$ and $\omega'$ are representatives of
Stiefel-Whitney classes of different orbifold bundles. Instead, we
have
\[
          \Isharp(\tilde X, \tilde S
         ; \Gamma_{\s})_{\omega} = \Isharp(\tilde X, \tilde S
         ; \Gamma_{\s})_{\omega'+\epsilon},
\]
and similarly
\[
          \Isharp(\tilde X, \tilde S
         ; \Gamma_{\s})_{\omega+\epsilon} = \Isharp(\tilde X, \tilde S
         ; \Gamma_{\s})_{\omega'},
\]
So if we wish to define $\Isharp( X,  S
         ; \Gamma_{\s})_{\omega}$ when $S$ is normally immersed, and
         if we wish the result to be independent of the choice of
         $\omega$, in this way, we should take the two terms together,
         as we have done in \eqref{eq:blow-up-rule}.

With that said, if we impose the restriction that we consider only
$\omega$ without boundary along $S$, then we are free to modify the
definition of the functor: for any fixed choice of $\xi\in \cS$, we
can define a functor $\Isharp_{\xi}$ by leaving everything unchanged
except for the rule for dealing with double-points, where we
substitute the variant
\begin{equation}\label{eq:blow-up-rule-xi}
         \Isharp_{\xi}(X, S ; \Gamma_{\s})_{\omega} = \Isharp_{\xi}(\tilde X, \tilde S
         ; \Gamma_{\s})_{\omega}
        +  \xi \,\Isharp_{\xi}(\tilde X, \tilde S ; \Gamma_{\s})_{\omega+\epsilon}.
\end{equation}
For the rest of this paper, we shall remain with the more restricted
case $\xi=1$, with only occasional comments about the more general version.

\subsection{Twist moves and finger moves}

We now follow the strategy from \cite{KM-singular} to see how $\Isharp(X,
S ; \Gamma_{\s})$ changes when the immersion is changed in three standard
ways (introducing additional double-points).  These are the ``twist
move'', which comes in two oriented flavors, and the ``finger move''.

\begin{proposition}[\protect{\cf~\cite[Proposition 5.2]{KM-singular} and
      \cite[Proposition 3.1]{KM-s-invariant}}]
   \label{prop:twist-and-finger-formulae}
     Let $S^{*}$ be obtained from $S$ by either a positive
    twist move, or a finger move
    (introducing a canceling pair of double-points). Then we have,
    \[
              \Isharp(X, S^{*} ; \Gamma_{\s})_{\omega}  = \s(\V)\,
          \Isharp(X, S ; \Gamma_{\s})_{\omega} 
    \]
    where
     \[ \V  = P + T_{0}^{2} +
        T_{0}^{-2}.
   \] 
   For the negative twist move on the other hand, the map $\Isharp$ is
   unchanged:
   \[
              \Isharp(X, S^{*} ; \Gamma_{\s})_{\omega}  =
          \Isharp(X, S ; \Gamma_{\s})_{\omega} .
   \]
 \end{proposition}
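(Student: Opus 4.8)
The plan is to follow the strategy of \cite{KM-singular}, reducing each of the three moves to a local computation on a model cobordism in $S^{4}$ via a stretching (neck-pulling) argument. Recall that each of the twist moves and the finger move can be realized by a connected sum: in a standard ball, $S^{*}$ differs from $S$ by connect-summing with a standard model pair $(S^{4}, \Sigma_{\pm})$ along $(S^{3}, S^{1})$, where for the finger move $\Sigma$ is a twice-punctured torus (or equivalently the composition of two twist-move models of opposite sign, which explains why the finger-move factor is the product of the two twist-move factors — but since the negative one is $1$, the finger move gives $\s(\V)$ just like the positive twist). So the real content is the two twist moves. For a twist move, the model surface has a single double point, and applying the blow-up rule \eqref{eq:blow-up-rule} we write $\Isharp(X,S^{*};\Gamma_{\s})_{\omega}$ as a sum of two terms coming from the proper transform $\tilde S^{*}$ in the blow-up, with Stiefel--Whitney classes $\omega$ and $\omega + \epsilon$. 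After blowing up, $\tilde S^{*}$ is an embedded cobordism that is isotopic to $S$ with an extra handle/sphere component attached, so by stretching the neck, each term becomes $\Isharp(X,S;\Gamma_{\s})$ multiplied by a scalar in $\cS$ coming from gluing anti-self-dual bifold connections on the model $(S^{4}, \tilde\Sigma_{\pm})$.

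The next step is to identify those scalars. As in Lemmas~\ref{lem:csum-R-theta} and \ref{lem:torus-sum}, the only contributions come from flat bifold connections on the model (no nonzero harmonic $2$-forms), and the obstruction spaces are either zero-dimensional (contributing $1$) or two-dimensional (contributing $\s(P)$, via identification of the obstruction bundle with $\eta$ and Lemma~\ref{lemma:w-relations}). The new feature compared with those lemmas is the $T_{0}^{\pm 2}$ terms, which must arise from the local coefficient system: the model surface for a twist move carries a nontrivial loop on which the curvature integral $\nu_{0}$ contributes a weight $T_{0}^{\pm 2}$, exactly as in the weight formula \eqref{eq:T-cobordism}, and summing the two blow-up terms (over the two orientations of the exceptional curve, which flip the sign of the relevant self-intersection and hence the power of $T_{0}$) produces $T_{0}^{2} + T_{0}^{-2}$. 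Combining with the $\s(P)$ from the two-dimensional obstruction gives $\V = P + T_{0}^{2} + T_{0}^{-2}$ for the positive twist. For the negative twist, the model $(S^{4}, \tilde\Sigma_{-})$ has the relevant connection unobstructed and the local coefficient weights on the two blow-up terms cancel or combine to $1$, giving no change — this is the $R_{+}$-type behavior rather than the $R_{-}$-type.

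The main obstacle I expect is the bookkeeping for the local coefficient weights in the blow-up rule: one must track carefully how $\omega$ versus $\omega + \epsilon$ interacts with the framing-dependent exponent of $T_{0}$ in \eqref{eq:T-cobordism}, making sure the self-intersection corrections $(1/2)(S \cdot S)$ are computed consistently in the two blow-ups (whose exceptional curves carry opposite complex orientations, hence opposite self-intersection contributions), and that the sum of the two terms is genuinely independent of the auxiliary choices. This is precisely the subtlety that motivated the two-term blow-up rule \eqref{eq:blow-up-rule} in the first place, so the computation should be set up to exploit that deformation invariance. Once the model computation is pinned down — most cleanly by taking $S$ empty (or a small disk) and $K$ empty, so that $\Isharp$ of the model is just $F_{5}$-like data over $\cS$ and the scalars can be read off directly — the general case follows by the excision/gluing formalism already used repeatedly in the previous sections, with no further difficulty.
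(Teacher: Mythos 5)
Your general framework (the blow-up rule, connected sum with a closed model pair, neck-stretching, low-energy reducibles weighted by the local system, two-dimensional obstruction bundles giving $\s(P)$) is the paper's, but two of your key steps are wrong. For the positive twist move, the proper transform gives the model $(\bar{\CP}^{2},C)$ with $C$ a conic, and the contributions are \emph{not} only from flat connections: on the double cover $S^{2}\times S^{2}$ the reducibles have $e(K)=(\delta,-\delta)$ with $\delta\in\{0,\pm1\}$. The two \emph{non-flat} $\SO(2)$-reducibles $\delta=\pm1$ are unobstructed and both lie in the \emph{same} blow-up term (the one with class $\omega$, since $w_{2}(E)$ vanishes near the blow-up for these); their curvature integrals give $T_{0}^{2}+T_{0}^{-2}$. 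The flat connection ($\delta=0$) has $w_{2}=\epsilon$ near the blow-up, so it contributes only to the $\omega+\epsilon$ term, where its two-dimensional obstruction space gives $\s(P)$. Your account --- only flat connections contribute, with $T_{0}^{2}$ and $T_{0}^{-2}$ distributed one to each blow-up term via ``the two orientations of the exceptional curve'' --- misidentifies both the source of the $T_{0}^{\pm2}$ and the split between the $\omega$ and $\omega+\epsilon$ terms; the two terms in the blow-up rule differ by Stiefel--Whitney class, not by an orientation choice, and the bookkeeping you defer would not close up as described.

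More seriously, the finger move is not in general a composition of a positive and a negative twist, so it cannot be dispatched by multiplying the two twist factors. A finger move pushes a sheet of $S$ along an arc into a possibly distant or different sheet; only the special self-finger-move along a short arc coincides with a pair of opposite twists, and the paper uses that special case only to pin down a single exponent at the end. The general case needs a separate, substantial argument: close up the local model to $(Z,\Sigma)$ with $Z=\bar{\CP}^{2}\csum\bar{\CP}^{2}$ and $\Sigma$ two spheres of square $-2$; analyze the $\kappa=0$ and $\kappa=1/4$ moduli spaces for all four classes $\nu=0,\epsilon_{1},\epsilon_{2},\epsilon_{1}+\epsilon_{2}$ together with the Klein four-group action of flat line bundles; show the $\kappa=1/4$ part consists of four arcs (lying in the $\nu=0$ and $\nu=\epsilon_{1}+\epsilon_{2}$ pieces, contributing $T_{0}^{2}+T_{0}^{-2}$) while the $\kappa=0$ part consists of two flat points in the $\nu=\epsilon_{i}$ pieces mapping to opposite ends of the representation variety of $(S^{3},U_{2})$, whose combined contribution is identified with $\s(P)$ via the relation $\lambda_{pq}+\lambda'_{pq}=w_{2}$. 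None of this appears in your proposal, so the finger-move case is unproved as written.
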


\begin{remark}
    If we put $T_{1}=T_{2}=T_{3} = 1$, the formulae in the above
    proposition are essentially the same as those in \cite[Proposition
    5.2]{KM-singular}, but with the ``$t$'' from that earlier paper now
    replaced by $T_{0}^{2}$. The factor of $2$ in the exponent again
    arises because we have used the $\SO(3)$ connection rather than
    the $\SU(2)$ connection in defining the local system. Formulae of
    this sort go back to \cite{Obstruction}. The case of the finger
    move is also formally similar to crossing-change results in
    Heegaard-Floer homology \cite{OSS-Grid, Alishahi-Eftekhary} and in
    Bar-Natan homology \cite{Alishahi}.
\end{remark}

We prove the various parts of this proposition in the paragraphs. 
Our exposition describes just the case of $\Gamma$,
because the results are local, and the general $\Gamma_{\s}$ is obtained by base change.

\paragraph{Twist moves.}

We begin with the positive twist move. In this case, as explained in
\cite{KM-singular} and \cite{Obstruction}, the result of the positive
twist move followed by taking the proper transform in the blow-up is
to replace $(X,S)$ with
\[
          (X' , S') = (X, S) \csum (\bar{\CP}^{2} , C)
\]
where $C$ is a conic curve. According to our definition
\eqref{eq:blow-up-rule}, we must therefore compute
\[
\Isharp(X', S' ; \Gamma)_{\omega} + \Isharp(X', S' ;
\Gamma)_{\omega+\epsilon}.
\]
Once again, we compute by a connected sum argument. A dimension count
shows that the weak limit $[E,A]$ on $(\bar{\CP}^{2} , C)$ lies in a
moduli space of formal dimension $d_{0} \le -1$, which means that its
action $\kappa_{0}$ satisfies the bound $\kappa_{0} \le 1/4$. Since
the formal dimension is negative, the connection must be reducible,
either to $\pm 1$, to $\SO(2)$, or to $O(2)$. The double cover is
$S^{2}\times S^{2}$, with the involution $\tau(x,y) = (-y,-x)$. The
fixed-point set is the anti-diagonal $\Delta^{-}$. The pull-back
$[\tilde E, \tilde A]$ on $S^{2}\times S^{2}$ must be reducible,
either to $\SO(2)$ or the trivial group, so this $\SO(3)$ bundle has
the form
\[
          \R \oplus K
\]
where $e(K)$ can be taken to be $\tau$-invariant in the case that
$[E,A]$ reduces to $\pm 1$, and $\tau$-anti-invariant in the case that
$[E,A]$ reduces to $O(2)$. In the standard basis, $e(K)$ has the form
$(\delta, -\delta)$ or $(\delta, \delta)$ respectively. A class of the
second sort is not represented by an anti-self-dual form however. So
$e(K)=(\delta, -\delta)$ and $[E,A]$ reduces either to $\pm 1$ or to
$\SO(2)$. The bound on $\kappa_{0}$ means that $e(K)^{2} \ge -2$, so
$\delta^{2}\le 1$. 

If $\delta=0$, then $w_{2}(\tilde E) = 0$, which means that $w_{2}(E)
= \epsilon$ in the neighborhood of the blow-up. If $\delta=\pm 1$,
then $w_{2}(E)$ is zero in the neighborhood. The dimension count shows
that the two cases $\delta=\pm 1$ are unobstructed, and these
contribute the terms $T_{0}^{2} + T_{0}^{-2}$. (The calculation here
is just as in \cite{KM-singular}.) So we have
\[
           \Isharp(X', S' ;
\Gamma)_{\omega} = (T_{0}^{2} + T_{0}^{-2})  \Isharp(X, S ;
\Gamma)_{\omega}
\]

The case $\delta=0$ is the case of the flat bifold connection on
$(\bar\CP^{2}, C)$, and it contributes to the term $\Isharp( X', S' ;
\Gamma)_{\omega+\epsilon}$. The obstruction space is again
2-dimensional, and just as the case of a connected sum with either
$(S^{4}, R_{-})$ or $(S^{4}, T^{2})$, we obtain
\[
           \Isharp(X', S' ;
\Gamma)_{\omega+\epsilon} = P  \Isharp(X, S ;
\Gamma)_{\omega}
\]
This concludes the proof for the positive twist move. 

The negative twist move is straightforward. In this case we must
consider $(X', S')$ obtained from $(X,S)$ by forming the connect sum
with $(\bar \CP^{2}, \emptyset)$. The term with $\epsilon$ does not
contribute, and the term $ \Isharp(X', S' ;
\Gamma)_{\omega}$ is equal to $ \Isharp(X, S ;
\Gamma)_{\omega}$ as in \cite{KM-singular}.

\paragraph{Finger moves.}

Let $S^{*}$ be obtained from $S$ by a finger move, and let $S'$ be
obtained from $S^{*}$ by blowing up at the two double points and
taking the proper transform. Let $\epsilon_{1}$ and $\epsilon_{2}$ be
the exceptional sets of the two blow-ups. From the definition in
\eqref{eq:blow-up-rule}, we see that what we must compute is a sum of
four terms
\begin{equation}\label{eq:four-w2}
\I^{\sharp}(X', S' ; \Gamma)_{\omega} + 
\I^{\sharp}(X', S' ; \Gamma)_{\omega + \epsilon_{1}} + 
\I^{\sharp}(X', S' ; \Gamma)_{\omega + \epsilon_{2}} + 
\I^{\sharp}(X', S' ; \Gamma)_{\omega + \epsilon_{1}  + \epsilon_{2}},
\end{equation} 
and the desired answer is $\U \,\Isharp(X, S; \Gamma)_{\omega}$,
where $\U$ is as in part \ref{item:pos-and-finger} of the
Proposition.

To focus on the region where the change occurs, let us write
\[
        (X,S) = (X_{1}, S_{1}) \cup (X_{2}, S_{2}),
\]
where $(X_{2}, S_{2})$ is a standard 4-ball containing a standard pair
of disks, and $(X_{1}, S_{1})$ is the closure of the complement. The
two pairs meet along a pair $(S^{3}, U_{2})$, where $U_{2}\subset$ is a
standard 2-component unlink. Let $(X_{2}', S_{2}')$ be obtained from
$(X_{2}, S_{2})$ by the finger move and proper transform. So we have
\[
               (X', S') = (X_{1}, S_{1}) \cup (X_{2}' , S_{2}').
\]

The manifold $(X_{2}' , S_{2}')$ has boundary $(S^{3}, U_{2})$, and we
can form from it a closed pair by attaching a 4-ball and a standard
pair of disks. We write $(Z, \Sigma)$ for the resulting pair:
\begin{equation}\label{eq:Z}
    (Z, \Sigma) = (X_{2}', S_{2}') \cup (B^{4}, D^{2}\amalg D^{2}).
\end{equation}
The manifold $Z$ is a connected sum of two copies of $\bar
  \CP^{2}$, and we write $E_{1}$, $E_{2}$ for the two exceptional
curves. The surface $\Sigma$ is a union of two spheres,
\[
         \Sigma = \Sigma_{1} \amalg \Sigma_{2},
\]
each of which has square $-2$. The two components $\Sigma_{1}$ and
$\Sigma_{2}$ have the same mod 2 homology class, but over the integers
we have (depending on choices made),
\[
\begin{aligned}[]
    [\Sigma_{1}] &= - [E_{1}] - [E_{2}]\\
    [\Sigma_{2}] &= - [E_{1}] + [E_{2}].
\end{aligned}
\]

The proof of the formula for the finger move depends on understanding
the moduli spaces on $(Z,\Sigma)$, for small energy $\kappa$, namely
$\kappa=0$ and $\kappa=1/4$. As in
\eqref{eq:four-w2} above, we will need to understand these moduli
spaces
\[
         M(Z,\Sigma)_{\nu}
\]
for four different values of the Stiefel Whitney class $\nu$, namely
\[
        \nu = 0, \quad \nu =\epsilon_{1}, \quad \nu =\epsilon_{2}, \quad
        \text{and}\quad \nu =\epsilon_{1} + \epsilon_{2},
\]
where $\epsilon_{i}$ is a representative in the class of $E_{i}$. Let
us write
\[
          M(Z, \Sigma)_{*}
\]
for the union of $M(Z,\Sigma)_{\nu}$ over these four values of $\nu$.

Because we are working with $\Isharp$, our interpretation of $M(Z,\Sigma)_{*}$ is that
it parametrizes $\SU(2)$ gauge equivalence classes of anti-self-dual $\SU(2)$
connections on the complement of the four spheres $E_{1}$, $E_{2}$,
$\Sigma_{1}$ and $\Sigma_{2}$, such that the
limiting holonomy around the links of the spheres $\Sigma_{i}$ is order 4, and the
holonomy around the links of the spheres $E_{i}$ are each $1$
\emph{or} $-1$, depending on the value of $\nu$. The metric on $Z$ is
an orbifold metric as usual, with singular set $\Sigma$.

Consider a flat line bundle $\xi$ on
\[
       Z' = Z\setminus ( \Sigma_{1}, \Sigma_{2}, E_{1}, E_{2}).
   \]
Write $(\sigma_{1}, \sigma_{2}, \eta_{1}, \eta_{2})$ for the
holonomy of $\xi$ around the links of these four spheres, and require
that these are
$(\pm 1, \pm 1, \pm 1, \pm 1)$. A push-off of $E_{1}$ meets $\Sigma_{1}$,
$\Sigma_{2}$ and $E_{1}$ once each. Similarly with $E_{2}$.
So we have relations
\[
        \eta_{1}= \eta_{2} =  \sigma_{1} + \sigma_{2}.
    \]
So there are four possibilities for $\xi$,
including the trivial bundle, and their possible holonomies are:
\[
    \begin{aligned}
        (\sigma_{1}, \sigma_{2}, \eta_{1}, \eta_{2}) & =
        (1,1,1,1), \quad\text{or} \\
         & =
         (1,-1,-1,-1), \quad\text{or} \\
          & =
          (-1,1,-1,-1), \quad\text{or} \\
           & =
        (-1,-1,1,1). \\
    \end{aligned}
\]
They form the group isomorphic to $V_{4}$. 

The
flat line bundles act $\xi$ act on $M(Z,\Sigma)_{*}$ by tensor
product. So we have an action of $V_{4}$ on this moduli space.
Tensoring by $\xi$
either leaves $\nu$ unchanged (if $\sigma_{1}=\sigma_{2}$), 
or adds $\epsilon_{1} + \epsilon_{2}$. So
the subset
\begin{equation}\label{eq:zero-and-both}
 M(Z,\Sigma)_{0} \cup  M(Z,\Sigma)_{\epsilon_{1} + \epsilon_{2}}
\end{equation}
is closed under the Klein 4-group action, as is the complementary
subset,
\begin{equation}\label{eq:one-epsilon}
        M(Z,\Sigma)_{\epsilon_{1}} \cup  M(Z,\Sigma)_{\epsilon_{2}}.
    \end{equation}

    The quotient
    \[
            M(Z,\Sigma)_{*} / V_{4}
     \]   
    parametrizes $\SO(3)$ anti-self-dual connections $[B]$ on the complement
    of the four spheres with the property that the holonomy around the
    links of the $\Sigma_{i}$ has order $2$ and the holonomy around the
    links of the $E_{i}$ is $1$ in $\SO(3)$. This is the same as the
    space of bifold $\SO(3)$ connections (without marking) on $(Z,\Sigma)$:
    \[
            M(Z,\Sigma)_{*} / V_{4} = M_{\SO(3)}(Z,\Sigma).
    \]

The next lemma (and the notation ``twisted reducibles'') is from \cite{KM-structure}.

    \begin{lemma}\label{lemma:klein}
        The action of the Klein 4-group is free except at ``twisted
        reducibles''. That is, the $\SU(2)$
        connection $A$ is gauge-equivalent to
        $A\otimes \xi$ if and only if the holonomy of\/
        $[\mathrm{ad}(A)]$ is contained in
        $O(2)\subset\SO(3)$ and the associated
        real line bundle to the $O(2)$ connection is isomorphic to
        $\xi$. \qed
    \end{lemma}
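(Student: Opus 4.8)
It suffices to establish the stated equivalence, the opening sentence of the lemma being its colloquial rephrasing. The plan is to argue purely at the level of bundles-with-connection, independently of the anti-self-duality equation; since everything below is pointwise in nature it respects the orbifold structure along $\Sigma$ without further comment. Fix a non-trivial $\xi \in V_{4}$, write $\xi_{\mathbb{C}} = \xi \otimes_{\R} \mathbb{C}$ for the associated flat complex line bundle (so that $\xi_{\mathbb{C}}^{\otimes 2}$ is canonically trivial), and let $\hat E$ be the rank-$2$ Hermitian bundle carrying $A$. The first step is a standard reformulation: $A$ is gauge-equivalent to $A \otimes \xi$ if and only if there is a parallel $\mathbb{C}$-linear bundle isomorphism $u \colon \hat E \to \hat E \otimes \xi_{\mathbb{C}}$ --- one implication is immediate, and for the converse the polar decomposition of any such $u$ (the connection being unitary) produces a parallel unitary determinant-one one. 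I would then split $\mathrm{End}(\hat E) = \underline{\mathbb{C}} \oplus \mathrm{End}_{0}(\hat E)$ into scalar and trace-free parts and record that $\mathrm{End}_{0}(\hat E) = \mathrm{ad}(\hat E) \otimes_{\R} \mathbb{C}$, so that $\mathrm{End}_{0}(\hat E) \otimes_{\mathbb{C}} \xi_{\mathbb{C}} \cong \mathrm{ad}(\hat E) \otimes_{\R} \xi_{\mathbb{C}}$.

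For the ``only if'' direction, suppose such a $u$ exists; viewed as a section of $\mathrm{End}(\hat E) \otimes_{\mathbb{C}} \xi_{\mathbb{C}}$ it is parallel and nowhere vanishing. Because $\xi$ is a non-trivial flat $\Z/2$-bundle, $\xi_{\mathbb{C}}$ admits no non-zero parallel section, so the scalar component of $u$ vanishes and $u$ is a parallel section of $\mathrm{ad}(\hat E) \otimes_{\R} \xi_{\mathbb{C}}$. Next, $\det u$ is a parallel section of the canonically trivial line bundle $\xi_{\mathbb{C}}^{\otimes 2}$, hence a non-zero constant, and after rescaling $u$ by a complex constant I may assume $(u \otimes \mathrm{id}_{\xi_{\mathbb{C}}}) \circ u = \mathrm{id}_{\hat E}$. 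In a local trivialization of $\xi$, $u$ then becomes, up to an overall sign, a trace-free endomorphism $\hat u$ of $\hat E$ with $\hat u^{2} = \mathrm{id}$, hence diagonalizable with eigenvalues $1$ and $-1$; the induced automorphism $\mathrm{ad}(\hat u)$ of $\mathrm{ad}(\hat E)$ is globally defined and parallel, and the splitting of $\mathrm{ad}(\hat E)$ into its $(+1)$- and $(-1)$-eigenbundles is a parallel decomposition $\mathrm{ad}(\hat E) = \ell \oplus \nu$ into a line bundle and a $2$-plane bundle --- precisely a reduction of the holonomy of $\mathrm{ad}(A)$ to $O(2)$ with $\ell$ its associated real line bundle. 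Finally, transporting a local generator of $\ell$ around a loop --- it depends linearly on $\hat u$, so it changes sign exactly when the chosen trivialization of $\xi$ does --- shows that $\ell$ and $\xi$ have the same monodromy, so $\ell \cong \xi$; and since $\mathrm{ad}(\hat E)$ carries its canonical Lie-algebra orientation, the orientation line bundle $o(\nu)$ equals $o(\ell) \cong \xi$. No irreducibility hypothesis on $A$ enters here.

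For the converse, start from a reduction of the holonomy of $\mathrm{ad}(A)$ to $O(2)$ with associated real line bundle $\ell \cong \xi$. Then $\ell \subset \mathrm{ad}(\hat E)$ is a parallel subbundle, $\ell \otimes_{\R} \xi \cong \underline{\R}$ carries a nowhere-zero parallel section, and the image of that section under $\ell \otimes_{\R} \xi \hookrightarrow \mathrm{ad}(\hat E) \otimes_{\R} \xi_{\mathbb{C}} \cong \mathrm{End}_{0}(\hat E) \otimes_{\mathbb{C}} \xi_{\mathbb{C}}$ is a parallel, nowhere-zero section $u$; since a non-zero element of $\mathrm{ad}(\hat E)$ is an invertible (traceless skew-Hermitian) endomorphism of $\hat E$, the map $u$ is a parallel bundle isomorphism $\hat E \to \hat E \otimes \xi_{\mathbb{C}}$, so $A$ is gauge-equivalent to $A \otimes \xi$. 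The step I expect to require the most care --- everything else being routine linear algebra together with covariant-constancy arguments --- is the identification in the ``only if'' part of the orientation line bundle of $\nu$ with $\xi$, and the attendant bookkeeping of the real and complex structures on $\mathrm{ad}(\hat E)$. This is, in essence, the argument of \cite{KM-structure}.
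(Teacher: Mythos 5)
Your proof is correct. The paper itself supplies no argument here --- the lemma is stated with a \qed and attributed to \cite{KM-structure} --- and what you have written is precisely the standard stabilizer computation from that reference: a parallel isomorphism $u\colon \hat E \to \hat E\otimes\xi_{\C}$ must be trace-free (no parallel sections of a non-trivial flat $\xi_{\C}$), can be normalized to square to the identity, and its eigenspace decomposition of $\mathrm{ad}(\hat E)$ gives the $O(2)$ reduction; conversely a parallel section of $\ell\otimes_{\R}\xi\cong\underline{\R}$ manufactures $u$. The two small points you flag are indeed the only ones needing care, and both go through: the normalizing constant satisfies $c^{2}=-1/\det u=\mp 1$, so it has unit modulus and unitarity of $u$ is preserved; and the identification $\ell\cong\xi$ is cleanest if one notes that $iu$ is a parallel nowhere-vanishing section of $\ell\otimes_{\R}\xi$, whence $w_{1}(\ell)=w_{1}(\xi)$.
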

    
The situation described in the lemma above can happen only if $\xi$ either
trivial or has
\[
 (\sigma_{1}, \sigma_{2}, \eta_{1}, \eta_{2})  =
        (-1,-1,1,1).
    \]

We are now ready to describe the small-action moduli spaces, beginning
with a description of the quotients $M_{\SO(3)}(Z,\Sigma)_{*}$.

    \begin{lemma}
        The space of bifold connections $M_{\SO(3)}(Z,\Sigma)$ with
        $\kappa=0$ consists of a single point, with $\Z/2$ monodromy
        and $O(2)$ stabilizer.

        For generic metrics, the space of bifold connections  $M_{\SO(3)}(Z,\Sigma)$ with
        $\kappa=1/4$ consists of a single arc and possibly some
        additional circles. Except for the endpoints of the arc, these
        bifold connections with $\kappa=1/4$ are irreducible (i.e.~have trivial
        stabilizer in $\SO(3)$). The endpoints of the arc have $\SO(2)$
        holonomy, and therefore $\SO(2)$ stabilizer.
    \end{lemma}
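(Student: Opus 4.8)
The plan is to reduce the question to the geometry of the branched double cover and then appeal to the standard Uhlenbeck--Freed machinery, exactly as in the proofs of Lemmas~\ref{lem:csum-R-theta} and~\ref{lem:torus-sum}. Since $[\Sigma_{1}]+[\Sigma_{2}]=-2[E_{1}]$ is even, $(Z,\Sigma)$ has a branched double cover $\pi\colon\tilde Z\to Z$ with deck involution $\tau$; from $\chi(\tilde Z)=2\chi(Z)-\chi(\Sigma)=4$, $\sigma(\tilde Z)=2\sigma(Z)-\tfrac12[\Sigma]^{2}=-2$, and the fact that $\pi_{1}(Z\setminus\Sigma)\to\Z/2$ has simply-connected kernel, one finds that $\tilde Z$ is a simply-connected closed $4$-manifold with $b^{+}=0$. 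Hence $(Z,\Sigma)$ carries no nonzero self-dual harmonic $2$-form, every class has a unique anti-self-dual harmonic representative, and an $\SO(3)$ bifold anti-self-dual connection on $(Z,\Sigma)$ is the same thing as a $\tau$-invariant such connection on $\tilde Z$, with the action doubled.

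For $\kappa=0$ the connections are flat, hence correspond to representations $\pi_{1}^{\mathrm{orb}}(Z,\Sigma)\to\SO(3)$ sending the meridian of $\Sigma$ to an involution. The intersection numbers $[\Sigma_{i}]\cdot[E_{j}]$ give $H_{1}(Z\setminus\Sigma)\cong\Z/2$, and as the complement of this pair of spheres has abelian fundamental group, generated by either meridian, $\pi_{1}^{\mathrm{orb}}(Z,\Sigma)\cong\Z/2$. There is then exactly one representation of the required kind up to conjugacy --- the meridian going to a rotation by $\pi$ --- whose image is $\Z/2$ and whose stabilizer in $\SO(3)$ is the centralizer $O(2)$ of such a rotation. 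This is the first assertion.

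Now take $\kappa=1/4$. A reducible bifold connection of this action reduces to $\SO(2)$ and is classified by its orbifold first Chern class; because $b^{+}=0$ every such reduction is automatically anti-self-dual, and a short computation --- as in the positive twist move analysis above, where the relevant classes had square $-2$ on the double cover --- shows that exactly two orbifold line bundles give action $1/4$, and that conjugate reductions coincide as $\SO(3)$ connections, so that there are precisely two reducibles, each with stabilizer $\SO(2)$. (No properly $O(2)$-reducible arises at $\kappa=1/4$: the flat connection accounts for $\kappa=0$, and the twisted reducible of Lemma~\ref{lemma:klein} is excluded because the relevant Euler class is not represented by an anti-self-dual form, just as above.) For generic orbifold metrics the Freed--Uhlenbeck transversality theorem makes the irreducible part of $M_{\SO(3)}(Z,\Sigma)_{\kappa=1/4}$ a smooth $1$-manifold, the formal dimension being $1$ (as one computes from the orbifold index formula, or from the corresponding count on $\tilde Z$). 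At each reducible the deformation complex splits into a $\tau$-invariant summand --- acyclic in degrees $1$ and $2$ since $b^{1}(Z)=b^{+}(Z)=0$ --- and a charged summand twisted by the square of the reduction; since the formal dimension is $1$, the stabilizer is one-dimensional, and the charged $H^{2}$ vanishes generically, the charged $H^{1}$ is forced to be real two-dimensional. Hence a neighborhood of each reducible in the moduli space is the cone on $\CP^{0}$ modulo $\SO(2)$, i.e.\ a single half-open arc of irreducibles emanating from it.

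It remains to see that $M_{\SO(3)}(Z,\Sigma)_{\kappa=1/4}$ is compact apart from these two cone points: the least energy that can concentrate is $1$ at a smooth point and $1/2$ at a point of $\Sigma$ (a rescaled bubble there being a nontrivial $\SO(3)$ bifold instanton on the tangent orbifold $(S^{4},S^{2})$, of minimal action $1/2$ since its smooth double cover is $S^{4}$), and $1/4$ is too small for either, so by Uhlenbeck compactness no energy escapes and the only limits are the two reducibles. A compact $1$-manifold whose only boundary points are these two cone points is necessarily a single arc joining the two reducibles, together with a disjoint union of circles --- and all of its points but the two endpoints are irreducible. This is the second assertion. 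The point I expect to take most care is the index bookkeeping that pins the formal dimension of the $\kappa=1/4$ moduli space to $1$ (equivalently, the charged $H^{1}$ at each reducible to one complex dimension), together with the transversality at the reducibles needed for the two emanating arcs to join into one; both are routine given the orbifold index theorem and the generic-metrics results of \cite{KM-unknot,KM-singular,KM-s-invariant}.
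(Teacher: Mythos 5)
Your proof is correct in outline but follows a genuinely different route from the paper's for the $\kappa=1/4$ statement. The paper passes entirely to the branched double cover $\tilde Z\cong\bar\CP{}^{2}\#\bar\CP{}^{2}$, recognizes the lifted moduli space as the one with $w_{2}=(1,1)$, $\tilde\kappa=1/2$ studied by Fintushel--Stern, and imports from there both the compactness and the enumeration of the two reducible endpoints (the classes $\pm(1,1)$, $\pm(1,-1)$ of square $-2$ congruent to $w_{2}$), before descending to $Z$ by invariance. You instead work directly on the orbifold: you count the $\SO(2)$ reductions, establish the cone-on-$\CP^{0}$ local model at each via the split deformation complex, and get compactness from the energy quantization $\kappa\ge 1/2$ for a non-flat bubble at a point of $\Sigma$ (correct, since the bubble's branched double cover lives on $S^{4}$) versus $\kappa\ge 1$ off $\Sigma$. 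What the paper's route buys is that the two computations you defer as ``short'' or ``routine'' --- the count of exactly two reducibles at $\kappa=1/4$ and the formal dimension being $1$ --- are exactly the content of the Fintushel--Stern analysis, so they come packaged with a citation; if you stay on $Z$ you should actually carry out the enumeration (it amounts to the same lattice count on $\tilde Z$, not the $S^{2}\times S^{2}$ count from the twist-move paragraph you point to). The one place where your argument is thinner than the paper's is the $\kappa=0$ step: you assert that $\pi_{1}(Z\setminus\Sigma)$ is abelian and generated by a meridian, whereas the paper justifies this by identifying $Z\setminus\Sigma$ with $(0,1)\times\RP^{3}$ using the explicit construction of $(Z,\Sigma)$; that identification (or some substitute) is needed, since abelianness of the complement of a pair of spheres is not automatic. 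With that observation supplied and the reducible count written out, your argument goes through.
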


   \begin{proof}
        For $\kappa=0$, we are looking at flat orbifold bundles, or
        $\SO(3)$ representations of the orbifold fundamental
        group. The fundamental group of $Z\setminus \Sigma$ is $\Z/2$,
        because this space is $(0,1)\times \RP^{3}$. The two links of
        $S$ are non-zero elements. So there is a unique bifold
        connection.

        For $\kappa=1/4$, consider the branched double cover of
        $\tilde Z \to Z$
        along $\Sigma$. Let $\tilde \Sigma$ the inverse image of $\Sigma$. This
        consists of two spheres $\tilde \Sigma_{i}$, each of
        self-intersection $-1$. The manifold $\tilde Z$ itself is
        diffeomorphic to a connected sum of two copies of
        $\bar\CP^{2}$. 
        On $\tilde Z$ we seek $\SO(3)$
        connections with action $\tilde\kappa=1/2$, which requires
        $w_{2}^{2} = 2$ mod $4$. So $w_{2}=(1,1)$ in the standard
        basis. This is the sort of 1-dimensional moduli space
        considered in \cite{Fintushel-Stern}, from which we learn that 
        the moduli space on $\tilde Z$ with $w_{2}=(1,1)$ and
        $\kappa=1/2$ is 1-dimensional and compact. Its endpoints
        correspond to pairs of integer classes $\pm\lambda$ where
        $\lambda^{2}=-2$ and $\lambda=w_{2}$ mod $2$.
        The only possibilities are $\pm (1,1)$ and $\pm (1,-1)$. So
        the moduli space has two endpoints. The endpoints correspond
        to reducible connections.

        Returning to $Z$, the covering transformation preserves the
        classes $\lambda=(1,1)$ and $(1,-1)$, so the corresponding
        $\SO(2)$ connections on $\tilde Z$ descend to $\SO(2)$
        connections on $Z$. The moduli space on $Z$ is 1-dimensional,
        so must include an arc joining these two points. That is, the
        arc which is contained in the moduli space of $\tilde Z$
        consists of invariant connections which descend to $Z$.
    \end{proof}

   Lemma~\ref{lemma:klein}, together with the description of the
   $\SO(3)$ moduli space in last lemma above, gives
    us a description of the low-dimensional parts of $M(Z,\Sigma)_{*}$:

    \begin{proposition}\label{prop:Z-Sigma-moduli}
        The $\kappa=0$ part of $M(Z,\Sigma)_{*}$ consists of two
        points, each of which has monodromy group the cyclic group
        $\langle \mathbf{i} \rangle \subset \SU(2)$ of order $4$. 
        Under the Klein 4-group action (tensoring by flat line
        bundles), these are each fixed up to gauge equivalence
        by the action of tensoring by by the line bundle 
          $\xi[-1,-1,1,1]$ (in the obvious
        notation from above). The two  connections are interchanged by
        tensoring with $\xi[1,-1,-1,-1]$. These two points belong to
        the moduli spaces $M(Z,\Sigma)_{\epsilon_{1}}$ and $M(Z,
        \Sigma)_{\epsilon_{2}}$.

        The $\kappa=1/4$ part of $M(Z,\Sigma)_{*}$ consists of four
        arcs, together perhaps with some circles. The Klein 4-group
        acts transitively on the four arc-components of the moduli
        space. Two of the arcs belong to $M(Z,\Sigma)_{0}$ and two
        belong to $M(Z,\Sigma)_{\epsilon_{1}+\epsilon_{2}}$. 
    \end{proposition}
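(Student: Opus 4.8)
\emph{Proof plan.} The plan is to analyse the $V_{4}$-quotient $M(Z,\Sigma)_{*}\to M_{\SO(3)}(Z,\Sigma)$ fibrewise, feeding in the description of $M_{\SO(3)}(Z,\Sigma)$ from the preceding lemma and the description of the $V_{4}$-action from Lemma~\ref{lemma:klein}. Two topological inputs are needed: the first homology of $Z'$, and the behaviour of $w_{2}$ under the branched double cover $\pi\colon\tilde Z\to Z$ along $\Sigma$. From $[\Sigma_{1}]=-[E_{1}]-[E_{2}]$, $[\Sigma_{2}]=-[E_{1}]+[E_{2}]$ and $[E_{i}]^{2}=-1$ one reads off $H_{1}(Z';\Z)\cong\Z^{2}$, freely generated by the meridians $\sigma_{1},\sigma_{2}$, with $\eta_{1}=\sigma_{1}+\sigma_{2}$ and $\eta_{2}=\sigma_{1}-\sigma_{2}$; in particular $H^{2}(Z';\Z)$ is torsion-free. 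Using the projection formula, together with the facts that $\tilde\Sigma_{1},\tilde\Sigma_{2}$ are disjoint spheres of square $-1$ and that each $E_{i}$ meets $\Sigma$ in two points, one finds $\pi^{*}\epsilon_{1}=\pi^{*}\epsilon_{2}=[\tilde\Sigma_{1}]+[\tilde\Sigma_{2}]$ in $H^{2}(\tilde Z;\Z/2)=(\Z/2)^{2}$, so that $\ker(\pi^{*})=\{0,\epsilon_{1}+\epsilon_{2}\}$. For a bifold $\SO(3)$ connection $B$ on $(Z,\Sigma)$ with $w_{2}=\nu$, the pullback $\pi^{*}B$ extends over $\tilde\Sigma$ and satisfies a comparison $w_{2}(\pi^{*}B)=\pi^{*}\nu+c$ with $c$ a fixed class supported near the branch locus; I would calibrate $c$ using the $\kappa=0$ case below, obtaining $c=[\tilde\Sigma_{1}]+[\tilde\Sigma_{2}]$.

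For the $\kappa=0$ statement, $M_{\SO(3)}(Z,\Sigma)$ is the single flat connection $[B_{0}]$ with $\Z/2$ holonomy, so the $\kappa=0$ part of $M(Z,\Sigma)_{*}$ is the set of $\SU(2)$-lifts of $B_{0}$, i.e.\ the flat $\SU(2)$ connections, equivalently the homomorphisms $\tilde\phi\colon H_{1}(Z';\Z)=\Z^{2}\to\SU(2)$ with $\mathrm{ad}\circ\tilde\phi$ the holonomy of $B_{0}$. Since that holonomy sends $\sigma_{1}$ and $\sigma_{2}$ to the non-trivial rotation, $\tilde\phi$ sends them into $\{\mathbf{i},-\mathbf{i}\}$, its image is the cyclic group $\langle\mathbf{i}\rangle$ of order $4$, and conjugation by $\mathbf{j}$ identifies the lift $(\sigma_{1},\sigma_{2})\mapsto(\mathbf{i},\mathbf{i})$ with $(-\mathbf{i},-\mathbf{i})$ and $(\mathbf{i},-\mathbf{i})$ with $(-\mathbf{i},\mathbf{i})$, leaving exactly two classes. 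Reading the holonomies around the $E_{i}$-meridians off from $\eta_{1}=\sigma_{1}\sigma_{2}$, $\eta_{2}=\sigma_{1}\sigma_{2}^{-1}$ shows $(\mathbf{i},\mathbf{i})$ lies in $M(Z,\Sigma)_{\epsilon_{1}}$ and $(\mathbf{i},-\mathbf{i})$ in $M(Z,\Sigma)_{\epsilon_{2}}$; tensoring by $\xi[1,-1,-1,-1]$ exchanges them, while tensoring by $\xi[-1,-1,1,1]$ fixes each (it exchanges the two summands of the $U(1)$-reduction), as also follows from Lemma~\ref{lemma:klein} and the remark following it. Finally $\pi^{*}B_{0}$ is the trivial flat bundle, so $w_{2}(\pi^{*}B_{0})=0$; together with $\pi^{*}\epsilon_{i}=[\tilde\Sigma_{1}]+[\tilde\Sigma_{2}]$ this forces $c=[\tilde\Sigma_{1}]+[\tilde\Sigma_{2}]$, as asserted above.

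For the $\kappa=1/4$ statement, $M_{\SO(3)}(Z,\Sigma)$ is a single arc $\alpha$ together with some circles, with irreducible interior and $\SO(2)$-reducible endpoints. I would first show $\alpha$ contains no twisted reducibles: an endpoint has $\SO(2)$ stabiliser and, since $c_{1}$ of its reduction is non-trivial and $H^{2}(Z';\Z)$ is torsion-free, the reduction does not extend to $O(2)$; an interior point pulls back to a $\tau$-invariant \emph{irreducible} connection on $\tilde Z$ (the interior of the Fintushel--Stern arc in the $w_{2}=(1,1)$, $\tilde\kappa=1/2$ moduli space), whereas a twisted reducible has associated line bundle $\xi[-1,-1,1,1]$, the class dual to $\pi$, and so would pull back to an $\SO(2)$-reducible. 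Hence $V_{4}$ acts freely over $\alpha$ and the preimage of $\alpha$ is four disjoint arcs permuted simply transitively. These connections satisfy $w_{2}(\pi^{*}B)=(1,1)$ (the defining condition $\tilde\kappa=1/2$); with the calibrated comparison this gives $\pi^{*}\nu=0$, i.e.\ $\nu\in\{0,\epsilon_{1}+\epsilon_{2}\}$. Since $\xi[-1,-1,1,1]$ preserves $\nu$ while $\xi[1,-1,-1,-1]$ changes it by $\epsilon_{1}+\epsilon_{2}$, the four arcs split into two with $\nu=0$ and two with $\nu=\epsilon_{1}+\epsilon_{2}$.

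The main obstacle is the cohomological bookkeeping of the labels $\nu$: establishing the branch-locus comparison $w_{2}(\pi^{*}B)=\pi^{*}\nu+c$ and evaluating $c$, $\pi^{*}\epsilon_{i}$ and the relevant intersection numbers without a sign or factor-of-two slip (such a slip would interchange the conclusions $\{0,\epsilon_{1}+\epsilon_{2}\}$ and $\{\epsilon_{1},\epsilon_{2}\}$), together with the verification that $B_{0}$ is genuinely a twisted reducible while the endpoints of $\alpha$ are not. By comparison, the computation of $H_{1}(Z';\Z)$ and the conjugation bookkeeping for the two $\kappa=0$ lifts are routine.
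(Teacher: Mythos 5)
Your proposal is correct and follows essentially the same route as the paper: pass to the $V_{4}$-quotient $M_{\SO(3)}(Z,\Sigma)$, use Lemma~\ref{lemma:klein} to control stabilizers, and pin down the labels $\nu$ by pulling back to the branched double cover and matching $w_{2}$ against $[\tilde\Sigma_{1}]+[\tilde\Sigma_{2}]$. Your only departures are presentational — you enumerate the flat $\SU(2)$ lifts explicitly via $H_{1}(Z';\Z)\cong\Z^{2}$ rather than via orbit–stabilizer counting, and you package the double-cover $w_{2}$ bookkeeping as a comparison formula calibrated on the flat connection — and along the way you supply a detail the paper leaves implicit (that the $\SO(2)$-reducible endpoints of the arc are not twisted reducibles, so the $V_{4}$-action is free there).
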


    \begin{proof}
        From the previous lemma, the $\kappa=0$ part of the moduli
        space $M(Z,\Sigma)_{*}$ consists of a single orbit of
        $V_{4}$. From Lemma~\ref{lemma:klein} we also learn that the
        stabilizer of the orbit is the two-element subgroup consisting
        of the trivial line bundle and the line bundle
        $\xi[-1,-1,1,1]$. Since the fundamental group of the
        complement of $\Sigma$ is $\Z/2$, there is no flat $\SU(2)$
        bundle on $Z\setminus \Sigma$ whose holonomy on the links of
        $\Sigma$ is conjugate to the element $\mathbf{i}$ of order
        $4$. So the flat $\SU(2)$ connection exists only on
        $Z\setminus (\Sigma \cup E_{1} \cup E_{2})$ and must have
        holonomy $-1$ on the link of exactly one $E_{i}$. These flat
        connections therefore belong to $M(Z,\Sigma)_{\epsilon_{1}}$
        and $M(Z,\Sigma)_{\epsilon_{2}}$.

        We now turn to the $\kappa=1/4$ part of the moduli space. The
        previous lemmas again tell us that the Klein 4-group acts
        freely and the quotient is a 1-manifold containing a single
        arc. Therefore $M(Z,\Sigma)_{*}$ contains 4 arcs. We are left
        to determine which of the four parts of $M(Z,\Sigma)_{\nu}$
        ($\nu=*$) these belong to. An instanton
        $[A] \in M(Z,\Sigma)_{*}$ belonging to one of these arcs pulls
        back to an $\SU(2)$ instanton $[\tilde A]$ on
       \[\tilde Z \setminus (\tilde \Sigma \cup \tilde E_{1} \cup
       \tilde E_{2})\] with limiting holonomy $-1$ on the links of
       $\tilde \Sigma$. The limiting holonomy on the links of the
       sphere $\tilde E_{i}$ will be $(-1)^{\delta_{i}}$, where
       $\delta_{i}=1$ or $0$ according to whether
       $\epsilon_{i}$ appears in $\nu$. Because
       $[\tilde E_{i}]=[\tilde \Sigma_{1}]+[\tilde \Sigma_{2}]$ in mod 2 homology, we then
       obtain
       \[
             w_{2}( \mathrm{ad}(\tilde A) ) = (1 +
             \delta_{1}+\delta_{2})([\tilde \Sigma_{1}] + [\tilde \Sigma_{2}])
        \]
       However, the previous lemma tells us
       that the Stiefel-Whitney class of $[\mathrm{ad}(\tilde A)]$
       is dual to $[\tilde \Sigma_{1}]+[\tilde \Sigma_{2}]$. Therefore
       the possibilities are only $(\delta_{1}, \delta_{2})=(0,0)$ or
       $(1,1)$. The four arcs therefore belong to the components 
      $M(Z,\Sigma)_{0}$ and
      $M(Z,\Sigma)_{\epsilon_{1}+\epsilon_{2}}$. Two lie in each,
      because of the symmetry that arises from the $V_{4}$ action.
    \end{proof}

    \begin{corollary}\label{cor:opposite}
        If $A$ and $A'$ are the two (abelian) connections which comprise
        the zero-dimensional part of $M(Z,\Sigma)_{*}$, and if $m_{1}$ and
        $m_{2}$ are links of the two components of\/ $\Sigma$, oriented so
        that $A$ has monodromy $\mathbf{i}$ around both links, then the
        monodromy of $A'$ around $m_{1}$ and $m_{2}$ are $\mathbf{i}$ and $-\mathbf{i}$,
        up to overall conjugacy. \qed
    \end{corollary}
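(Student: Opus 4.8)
The plan is to read the statement off directly from Proposition~\ref{prop:Z-Sigma-moduli}, where essentially all the work has already been done. That proposition records the two facts we need: that the $\kappa=0$ part of $M(Z,\Sigma)_{*}$ consists of exactly the two abelian connections $A$ and $A'$, each with monodromy group $\langle\mathbf{i}\rangle\subset\SU(2)$; and that $A$ and $A'$ are interchanged, up to gauge equivalence, by tensoring with the flat line bundle $\xi[1,-1,-1,-1]$ (which has $\Z/2$ holonomy). So, after replacing $A'$ by a gauge-equivalent connection if necessary, I may take $A' = A\otimes\xi[1,-1,-1,-1]$.

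Next I recall the bookkeeping of the notation $\xi[\sigma_{1},\sigma_{2},\eta_{1},\eta_{2}]$: the entries are the holonomies of the flat line bundle around the links of $\Sigma_{1}$, $\Sigma_{2}$, $E_{1}$, $E_{2}$, in that order. In particular $\xi[1,-1,-1,-1]$ has holonomy $+1$ around $m_{1}$ (the link of $\Sigma_{1}$) and holonomy $-1$ around $m_{2}$ (the link of $\Sigma_{2}$). Since tensoring an $\SU(2)$ connection by a flat $\Z/2$ line bundle multiplies its holonomy around any loop by the central scalar $\pm1$ which is the holonomy of the line bundle around that loop --- the very action used throughout this subsection --- we conclude the following. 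With $m_{1}$ and $m_{2}$ oriented so that $A$ has monodromy $\mathbf{i}$ around each, as the statement permits, the connection $A'=A\otimes\xi[1,-1,-1,-1]$ has monodromy $(+1)\cdot\mathbf{i}=\mathbf{i}$ around $m_{1}$ and $(-1)\cdot\mathbf{i}=-\mathbf{i}$ around $m_{2}$, using the same pair of orientations. (Reversing the orientation of $m_{i}$ does not affect $\xi$, whose holonomy is its own inverse, so there is no inconsistency in using one fixed set of orientations for both $A$ and $A'$.) The clause ``up to overall conjugacy'' in the statement absorbs the ambiguity of the gauge equivalence $A'\cong A\otimes\xi[1,-1,-1,-1]$, which is canonical only up to a simultaneous conjugation of all the holonomies.

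There is essentially no obstacle here; the substance is entirely contained in Proposition~\ref{prop:Z-Sigma-moduli}, and what remains is the one-line computation $(-1)\cdot\mathbf{i}=-\mathbf{i}$. The only point that warrants a moment's care is the indexing: one must check that, in the symbol $\xi[\sigma_{1},\sigma_{2},\eta_{1},\eta_{2}]$, it is the $\sigma_{i}$ slot (and not the $\eta_{i}$ slot) that governs the effect on $m_{i}$, and that it is $A'$ rather than $A$ that is produced from $A$ by tensoring with $\xi[1,-1,-1,-1]$; both are immediate from the definitions and from the proof of Proposition~\ref{prop:Z-Sigma-moduli}.
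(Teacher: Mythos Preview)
Your proof is correct and is exactly the argument the paper intends: the corollary is marked with \qed because it follows immediately from Proposition~\ref{prop:Z-Sigma-moduli}, and you have spelled out precisely the one-line deduction, namely that tensoring $A$ by $\xi[1,-1,-1,-1]$ multiplies the monodromy around $m_{1}$ by $\sigma_{1}=+1$ and around $m_{2}$ by $\sigma_{2}=-1$.
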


    Let us return now to the pair $(X'_{2}, \Sigma'_{2})$, which we
    equip with a cylindrical end
    $\R^{+}\times (S^{3}, U_{2})$. With notation adapted from the
    discussion of $(Z,\Sigma)$, we examine the moduli space
         \[
                 M(X'_{2}, S'_{2})_{*}
          \]
   on the cylindrical end moduli spaces, with Stiefel-Whitney class
   $\nu=*$ running over the same four values. The $\SU(2)$ representation
   variety of $(S^{3}, U_{2})$ is an interval, which we denote by
   $[-1,1]$, so we have a map
  \[
            r :  M(X'_{2}, S'_{2})_{*} \to [-1,1]
  \]
    
   From
   Proposition~\ref{prop:Z-Sigma-moduli} and a stretching argument we
   learn that the $\kappa=0$ part $M(X'_{2}, S'_{2})_{*}$ consists
   of two points which are mapped by $r$ to endpoints of the interval $[-1,1]$. From
   Corollary~\ref{cor:opposite} we learn that the two points map to
   opposite ends of the moduli space.

   Similarly we learn that the $\kappa=1/4$ part of $M(X'_{2},
   S'_{2})_{*}$ contains four arcs, and that these are each mapped
   to $[-1,1]$ in such a way that the two endpoints of each arc map to
   opposite ends of $[-1,1]$. 

   Having described  these moduli spaces on the cylindrical-end
   manifold, we now describe how these give rise to the formula in
   Proposition~\ref{prop:twist-and-finger-formulae} for the case of
   the finger move. We can break the formula up into:
   \begin{enumerate}
   \item terms coming from the classes $\nu=0$ and
       $\nu=(\epsilon_{1} + \epsilon_{2})$ on the one hand; and
   \item terms coming from the classes $\nu=\epsilon_{1}$ and
       $\nu=\epsilon_{2}$,
   \end{enumerate}
(\cf~equations \eqref{eq:zero-and-both} and \eqref{eq:one-epsilon} above).
The first case is that of the four arcs that comprise the $\kappa=1/4$
   moduli space. Here the discussion closely mirrors the argument for the
   finger move in \cite{Obstruction} and \cite{KM-singular}. Each of
   the four arcs will contribute term to the formula having the shape
   \[
                    T_{0}^{x} T_{0}^{y} \Isharp(X, S; \Gamma)_{\omega}
   \]
 where $x$ and $y$ are curvature integrals for $\SO(2)$ connections on
 the components $\Sigma'_{1}$, $\Sigma'_{2}$ of the singular set in
 the cylindrical-end bifold $(X_{2}',S'_{2})$. The action of the group
 $V_{4}$ is by tensoring with real line bundles, the effect of which
 is to change the signs of $x$ and $y$. So the formula for the four
 arcs together has the form
\[
            (  T_{0}^{x}T_{0}^{y} +  T_{0}^{-x}T_{0}^{y} +
            T_{0}^{x}T_{0}^{-y} +  T_{0}^{-x}T_{0}^{-y})\, \Isharp(X, S; \Gamma)_{\omega}.
\] 
By symmetry, we have $x=y$ up to sign. So the formula simplifies to
\[
           (  T_{0}^{2x} +  T_{0}^{-2x})\, \Isharp(X, S; \Gamma)_{\omega}.
\]
A special case of the finger move is a pair of twist moves, one
positive and one negative. So by comparing this formula to the case of
the twist moves, we see that $x=\pm 1$. So the contribution of the
$\kappa=1/4$ moduli spaces to the formula for $\Isharp(X,S^{*};\Gamma)_{\omega}$ is
\begin{equation}\label{eq:arc-terms}
            (  T_{0}^{2} +  T_{0}^{-2})\, \Isharp(X, S; \Gamma)_{\omega}.
\end{equation}

Turning finally to the contributions from the classes
$\nu=\epsilon_{1}$ and $\nu=\epsilon_{2}$, we have seen that the
$\kappa=0$ moduli spaces
\[
      M(X_{2}, S_{2}')_{\epsilon_{1}} \quad\text{and}\quad 
                M(X_{2}, S_{2}')_{\epsilon_{2}} 
\]
each consist of a single point, and these map to the two endpoints of
$[-1,1]$. We can compare this to the moduli space $M(X_{2}, S_{2})$
with $\kappa=0$ (where $(X_{2}, S_{2})$ is now a ball with two disks,
equipped with a cylindrical end). For the latter moduli space, the map
\[
           r: M(X_{2}, S_{2})_{\kappa=0} \to [-1,1]
\]
is a homeomorphism. Let us pick points $p$ and $q$ on the two
disks and orientation $o_{p}$ and $o_{q}$ nearby. 
The flat connections on the orbifold $(X_{2}, S_{2})$ are determined
by the holonomies around oriented meridians at this point, as in
section~\ref{subsec:Lambda-class}, or equivalently by unit vectors
$i_{p}$ and $i_{q}$ in the $\R^{3}$ fibers $\mathbb{E}_{p}$ and
$\mathbb{E}_{q}$. Under the homeomorphism $r$, the endpoints of the
interval correspond to flat connections with $i_{p}=i_{q}$ and
$i_{p}=-i_{q}$ (when we identify $\mathbb{E}_{p}$ with
$\mathbb{E}_{q}$ via paths to the basepoint). If we work with based
the based moduli space of flat connections on $(X_{2}, S_{2})$, then
we instead obtain a map
\[
           r: \tilde M(X_{2},S_{2})_{\kappa=0} \to [-1,1]
\]
where the domain is now $S^{2}\times S^{2}$ and the preimage of the
endpoints is the union of the diagonal and anti-diagonal. This is
precisely the intersection of $  \tilde M(X_{2},S_{2})_{\kappa=0} $
with the standard representatives of the 2-dimensional cohomology
classes $\lambda_{pq}$ and $\lambda'_{pq}$ from
\eqref{eq:lambda-class-pq}. If we recall the relation $\lambda_{pq} +
\lambda'_{pq}= w_{2}(\mathbb{E}_{p})$ from
Lemma~\ref{lemma:lambda-relations-cohomology}, then we learn that the
preimage of the two endpoint comprise a standard representative for
the Poincar\'e dual of $w_{2}$. From Lemma~\ref{lemma:w-relations} and
a stretching argument, it then follows that 
for the original closed pair $(X,S)$ and the pair
$(X,S^{*})$ obtained by the finger move, the contribution to
$\Isharp(X,S^{*} ; \Gamma)_{\omega}$ coming from these moduli spaces
is $P \, \Isharp(X, S; \Gamma)_{\omega}$. This formula and the terms
\eqref{eq:arc-terms} together give the formula in
Proposition~\ref{prop:twist-and-finger-formulae} for the finger move:
\begin{equation}\label{eq:finger-formula}
        \Isharp(X, S^{*}; \Gamma)_{\omega} = ( P +  T_{0}^{2} +
        T_{0}^{-2}) \,\Isharp(X, S ; \Gamma)_{\omega},
\end{equation}
or more succinctly
\begin{equation}\label{eq:finger-formula-L}
        \Isharp(X, S^{*}; \Gamma)_{\omega} =L  \,\Isharp(X, S ; \Gamma)_{\omega}.
\end{equation}

\begin{remark}
    As discussed in section~\ref{subsec:double-points} above, we can
    choose to
    change our definition for the blow-ups and use the formula
    \eqref{eq:blow-up-rule-xi}. A little extra book-keeping is then
    required, but the final result needs only slight modification. 
    For the resulting functor $\Isharp_{\xi}$, the statement  of 
   Proposition~\ref{prop:twist-and-finger-formulae} is unchanged
   except for the formula for the factor $L$. We record this as a
   proposition.

\begin{proposition}
   \label{prop:twist-and-finger-formulae-xi}
    As in Proposition~\ref{prop:twist-and-finger-formulae}, 
    let $S^{*}$ be obtained from $S$ by either a positive
    twist move, or a finger move. Let the modified functor
    $\Isharp_{\xi}$ be defined using the blow-up rule
    \eqref{eq:blow-up-rule}. Then we have,
    \[
              \Isharp_{\xi}(X, S^{*} ; \Gamma_{\s})_{\omega}  = \s(\V_{\xi})\,
          \Isharp_{\xi}(X, S ; \Gamma_{\s})_{\omega} 
    \]
    where
     \[ \V_{\xi}  =  \xi P + T_{0}^{2} +
        T_{0}^{-2}.
   \] 
   For the negative twist move, the map $\Isharp_{\xi}$ is again
   unchanged:
   \[
              \Isharp(X, S^{*} ; \Gamma_{\s})_{\omega}  =
          \Isharp(X, S ; \Gamma_{\s})_{\omega} .
   \]
 \end{proposition}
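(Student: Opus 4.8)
The plan is to rerun the connected-sum-and-stretching arguments behind Proposition~\ref{prop:twist-and-finger-formulae}, simply keeping track of the factor $\xi$ that the modified rule \eqref{eq:blow-up-rule-xi} attaches each time a representative of $w_{2}$ is augmented by an exceptional divisor. Everything is local — the twist and finger moves, and the blow-ups that resolve the double points they introduce, all occur in a ball disjoint from the remaining double points of $S$ — so the connected-sum computations commute with resolving those other double points, and it suffices to treat the case that $S$ is embedded and that $\s$ is the identity; the general case follows by base change.

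The negative twist move requires no change: the relevant connected sum is with $(\bar\CP^{2},\emptyset)$, only the weak limit with $\nu=\omega$ contributes, no exceptional divisor ever appears, and the argument of Proposition~\ref{prop:twist-and-finger-formulae} gives $\Isharp_{\xi}(X,S^{*};\Gamma)_{\omega}=\Isharp_{\xi}(X,S;\Gamma)_{\omega}$ verbatim. For the positive twist move, resolving the single new double point replaces $(X,S)$ by $(X',S')=(X,S)\csum(\bar\CP^{2},C)$, and \eqref{eq:blow-up-rule-xi} reads
\[
\Isharp_{\xi}(X,S^{*};\Gamma)_{\omega}=\Isharp_{\xi}(X',S';\Gamma)_{\omega}+\xi\,\Isharp_{\xi}(X',S';\Gamma)_{\omega+\epsilon}.
\]
The conic $C$ is smooth and the connected sum is away from any remaining double points, so the stretching analysis is unaffected by the modified blow-up rule and still yields $\Isharp_{\xi}(X',S';\Gamma)_{\omega}=(T_{0}^{2}+T_{0}^{-2})\,\Isharp_{\xi}(X,S;\Gamma)_{\omega}$, the contribution of the two $\delta=\pm1$ reducibles, and $\Isharp_{\xi}(X',S';\Gamma)_{\omega+\epsilon}=P\,\Isharp_{\xi}(X,S;\Gamma)_{\omega}$, the contribution of the flat bifold connection. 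Hence $\Isharp_{\xi}(X,S^{*};\Gamma)_{\omega}=(\xi P+T_{0}^{2}+T_{0}^{-2})\,\Isharp_{\xi}(X,S;\Gamma)_{\omega}=\s(\V_{\xi})\,\Isharp_{\xi}(X,S;\Gamma)_{\omega}$.

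For the finger move, resolving the two new double points writes $\Isharp_{\xi}(X,S^{*};\Gamma)_{\omega}$ as the sum \eqref{eq:four-w2}, but now with the four terms weighted by $1,\xi,\xi,\xi^{2}$ according to $\nu=0,\epsilon_{1},\epsilon_{2},\epsilon_{1}+\epsilon_{2}$. The moduli-space analysis on $(Z,\Sigma)$ and on $(X_{2}',S_{2}')$ is exactly as in Proposition~\ref{prop:Z-Sigma-moduli} and the discussion after it: the $\kappa=0$ part contributes out of $M(X_{2}',S_{2}')_{\epsilon_{1}}$ and $M(X_{2}',S_{2}')_{\epsilon_{2}}$ two operators whose sum is $\s(P)$ (via $\lambda_{pq}+\lambda'_{pq}=w_{2}$ from Lemma~\ref{lemma:lambda-relations-cohomology} and Lemma~\ref{lemma:w-relations}), and each now carries exactly one factor of $\xi$, so their joint contribution is $\xi\s(P)\,\Isharp_{\xi}(X,S;\Gamma)_{\omega}$. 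The four $\kappa=1/4$ arcs split two-and-two between $M(Z,\Sigma)_{0}$ and $M(Z,\Sigma)_{\epsilon_{1}+\epsilon_{2}}$; the two arcs in a given $\nu$-class are interchanged by the $V_{4}$-element $\xi[-1,-1,1,1]$, which reverses both curvature integrals, while passing between the two $\nu$-classes reverses exactly one of them. Writing $A_{0}$ and $A_{12}$ for the (weight-free) contributions of the $\nu=0$ and $\nu=\epsilon_{1}+\epsilon_{2}$ arcs, the original ($\xi=1$) computation gives $A_{0}+A_{12}=(T_{0}^{2}+T_{0}^{-2})\,\Isharp_{\xi}(X,S;\Gamma)_{\omega}$; and since on one of the two $\nu$-classes the paired curvature integrals coincide (contributing $T_{0}^{2x}+T_{0}^{-2x}$) while on the other they are opposite (contributing $1+1=0$ in characteristic two), one of $A_{0},A_{12}$ equals this sum and the other vanishes. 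Thus $\Isharp_{\xi}(X,S^{*};\Gamma)_{\omega}$ is either $\s(\xi P+T_{0}^{2}+T_{0}^{-2})\,\Isharp_{\xi}(X,S;\Gamma)_{\omega}$ or $\s(\xi P+\xi^{2}(T_{0}^{2}+T_{0}^{-2}))\,\Isharp_{\xi}(X,S;\Gamma)_{\omega}$, and comparing with the formula obtained from a positive twist followed by a negative one — which realizes a finger move and gives the first expression — forces the first alternative, with the paired curvature integrals on the $\nu=0$ arcs equal to $\pm1$.

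The step I expect to be the real obstacle is exactly that last one: deciding which of $A_{0}$ and $A_{12}$ is nonzero. When $\xi=1$ the distinction is invisible, since both summands carry weight one and only $A_{0}+A_{12}$ enters; here it is precisely what separates $\V_{\xi}=\xi P+T_{0}^{2}+T_{0}^{-2}$ from $\xi P+\xi^{2}(T_{0}^{2}+T_{0}^{-2})$. I would resolve it via the equality of the finger-move formula with that of the composite twist moves, using that $\Isharp_{\xi}$ is a diffeomorphism invariant of the pair, but one must check that this comparison is legitimate — that the $\nu=\epsilon_{1}$ and $\nu=\epsilon_{2}$ contributions genuinely each acquire exactly one power of $\xi$ with no hidden $\xi$-dependence, and that $A_{0}+A_{12}$ really is $\xi$-independent (the $\kappa=1/4$ moduli spaces on $(Z,\Sigma)$ do not see $\xi$).
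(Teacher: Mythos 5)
Your proposal is correct and follows exactly the route the paper intends: the paper offers no separate argument beyond the remark that ``a little extra book-keeping'' on the proof of Proposition~\ref{prop:twist-and-finger-formulae} yields the result, and your proof supplies precisely that book-keeping (weights $1,\xi,\xi,\xi^{2}$ on the four Stiefel--Whitney classes, $\xi P$ from the $\kappa=0$ points in $M(\cdot)_{\epsilon_{1}}\cup M(\cdot)_{\epsilon_{2}}$, and the arcs split between $\nu=0$ and $\nu=\epsilon_{1}+\epsilon_{2}$). Your resolution of the one genuinely delicate point --- deciding that the nonvanishing arc contribution sits in $M(Z,\Sigma)_{0}$, via comparison with the composite of a positive and a negative twist move with $\xi$ treated as a formal variable --- is the correct elaboration of the same comparison the paper already uses to fix $x=\pm 1$ in the $\xi=1$ case.
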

\end{remark}

\subsection{Regular homotopies}

Recall that if $f_{0}$ and $f_{1}$ are two smooth embeddings of a closed
surface $S$ in a 4-manifold $X$, and if $f_{0} \simeq f_{1}$ as maps,
then one can find a homotopy which is a composite of steps, each of
which is one of:
\begin{itemize}
\item the introduction of a transverse double-point by a twist move;
\item the introduction of two transverse double-points by a finger
    move;
\item the inverse to one of the above;
\item an ambient isotopy.
\end{itemize}
The same applies to surfaces $S$ which arise as cobordisms between
knots or links, when the homotopy is relative to the boundary. As in
\cite{Obstruction,KM-singular,KM-s-invariant}, this observation can
be combined with the formulae in
Proposition~\ref{prop:twist-and-finger-formulae}, to obtain the
following result (among others).

\begin{proposition}\label{prop:homotopy}
    Let $S \subset \R^{4}$ be a closed embedded surface, not
    necessarily connected. Regard $S$ as a cobordism from the empty
    link in $\R^{3}$ to itself, optionally equipped with dots
    $q_{1}$,\dots, $q_d$. Then the resulting
    map
    \[
             \Isharp((S; q_{1}, \dots, q_{d}) ; \Gamma_{\s}) : \cS
             \to \cS
     \]
   depends only on the topology of the components of $S$, the number
   of dots on each, and the local orientations. \qed
\end{proposition}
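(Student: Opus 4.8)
The plan is to reduce an arbitrary embedded closed surface $S\subset\R^{4}$ to a standard split model by local surgeries whose effect on $\Isharp$ has already been computed, and then to evaluate that model by Frobenius calculus. First I would note that it suffices to treat the universal case $\cS=\cR$, $\s=\mathrm{id}$: the chain-level cobordism maps are defined by the $\cR$-valued weights \eqref{eq:T-cobordism}, and the blow-up rule of Section~\ref{subsec:double-points} is a sum of such maps, so everything is natural under a base change $\s\colon\cR\to\cS$; hence the element of $\cS$ attached to $(S;q_{1},\dots,q_{d})$ is the image under $\s$ of the corresponding element of $\cR$. It is therefore enough to show that the latter is an element of $\cR$ depending only on the topology of the components, the dot counts and the local orientations. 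Two further preliminary remarks: working over the integral domain $\cR$ is convenient because $P$ and $\V=P+T_{0}^{2}+T_{0}^{-2}$ are then non-zero-divisors; and $\Isharp$ of the empty link is free of rank one over $\cR$, so a cobordism from the empty link to itself induces multiplication by a well-defined element of $\cR$, and $\Isharp$ is monoidal for split unions --- if $S=S_{1}\sqcup S_{2}$ with the pieces in disjoint balls, then presenting $S$ up to isotopy as the composite cobordism ``$S_{1}$, then $S_{2}$'', functoriality identifies the element attached to $S$ with the product of those attached to $S_{1}$ and $S_{2}$ with their dots.

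Suppose some component of $S$ is non-orientable. After an isotopy moving the dots off of it, choose an orientation-reversing circle on that component; since every embedded circle in $\R^{4}$ is unknotted, an ambient isotopy brings a tubular neighbourhood (a M\"obius band) into standard position, and capping it off exhibits $S$ as a connected sum $S'\csum R_{+}$ or $S'\csum R_{-}$ with one of the two standard copies of $\RP^{2}$ in $S^{4}$ --- a purely local modification, so linking with the remaining components is irrelevant. The lemma that connect-summing with $R_{+}$ or $R_{-}$ annihilates the induced cobordism map then gives $\Isharp((S;q_{1},\dots,q_{d});\Gamma)=0$, in accordance with the statement. (If the M\"obius band can only be standardised after a homotopy through twist and finger moves, the same conclusion survives, since the ensuing relation $\V^{m}\Isharp(S;\Gamma)=0$ in the domain $\cR$ still forces the left side to vanish.) So from now on assume every component of $S$ is orientable.

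With $S$ orientable, with components of genera $g_{1},\dots,g_{k}$ and carrying $d$ dots, I would first slide all the dots on each component to a single prescribed point; by the discussion of cobordisms with dots in Section~\ref{subsec:surfaces-with-dots} the residual data is then only the number of dots on each component and their local orientations, and flipping a local orientation replaces the corresponding operator $\Lambda_{q}$ by $\Lambda'_{q}=P+\Lambda_{q}$ (Corollary~\ref{cor:norm-Lambda-reln}). As a map into the contractible space $\R^{4}$, $S$ is homotopic to the standard split model $S^{\mathrm{std}}$ --- a disjoint union of unknotted genus-$g_{i}$ surfaces in disjoint balls with the dots placed standardly --- so by the classification recalled just above, $S$ and $S^{\mathrm{std}}$ are joined by a finite sequence of ambient isotopies, twist moves, finger moves and inverses of these. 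Ambient isotopies and negative twist moves (and their inverses) leave $\Isharp$ unchanged, while positive twist moves and finger moves multiply it by $\V$ (Proposition~\ref{prop:twist-and-finger-formulae}); hence
\[
  \V^{b}\,\Isharp(S;\Gamma)=\V^{c}\,\Isharp(S^{\mathrm{std}};\Gamma),
\]
where $b$ is the number of positive-twist-or-finger moves and $c$ the number of their inverses in the sequence. It remains to evaluate the right-hand side. By monoidality it is a product over the components; Lemma~\ref{lem:torus-sum}, applied once per handle, turns a genus-$g$ component into a $2$-sphere at the cost of a factor $P^{g}$; and a $2$-sphere carrying $k$ dots at one point, regarded as a cobordism from the empty link to itself, induces the composite of the unit, the $k$-th power of $\Lambda_{q}$, and the counit of the rank-two Frobenius $\cR$-algebra that $\Isharp$ assigns to the unknot (its structure being pinned down by Lemma~\ref{lemma:w-relations} and Corollary~\ref{cor:norm-Lambda-reln}); using $\Lambda_{q}^{2}=P\Lambda_{q}+Q$ this is a fixed element of $\cR$, and mixed local orientations merely substitute $P+\Lambda_{q}$ for some of the $\Lambda_{q}$. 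Thus $\Isharp(S^{\mathrm{std}};\Gamma)$ is the element of $\cR$ predicted by the statement.

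The step that needs real work --- the main obstacle --- is to show that $b=c$ in the displayed identity, for then the non-zero-divisor $\V^{b}$ cancels and $\Isharp(S;\Gamma)=\Isharp(S^{\mathrm{std}};\Gamma)$. The key observation is that the ``cost'' of an elementary move --- the exponent of $\V$ it contributes, namely $+1$ for a forward positive twist or finger move, $-1$ for the inverse of one, and $0$ otherwise --- depends only on its effect on the pair $(d,e(\nu))$ given by the geometric double-point count and the normal Euler number: checking the three generating moves, the cost equals $\frac{1}{2}\Delta d-\frac{1}{4}\Delta e(\nu)$. Summing along a homotopy between the two embeddings $S$ and $S^{\mathrm{std}}$, for which $d=0$, the total cost $b-c$ equals $-\frac{1}{4}\bigl(e(\nu_{S^{\mathrm{std}}})-e(\nu_{S})\bigr)$, which vanishes because a closed orientable surface in $\R^{4}$ is null-homologous and so has zero normal Euler number. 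This yields $b=c$, completes the orientable case, and with the previous paragraphs proves the Proposition. The same argument, with Proposition~\ref{prop:twist-and-finger-formulae-xi} and the blow-up rule \eqref{eq:blow-up-rule-xi} in place of their unsubscripted counterparts, handles the functor $\Isharp_{\xi}$.
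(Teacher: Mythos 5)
Your proof is correct and follows the route the paper intends (the paper offers no written argument, deferring to the cited references and to Proposition~\ref{prop:twist-and-finger-formulae}): relate the given embedding to a standard split model by a homotopy decomposed into twist moves, finger moves and isotopies, balance the resulting exponents of $\V$ by tracking the signed double-point count between two embedded (hence double-point-free, normal-Euler-number-zero) ends, cancel $\V$ using that $\cR$ is a domain, and dispose of non-orientable components via the vanishing of maps induced by connected sums with $R_{\pm}$. The one over-claim is that an ambient isotopy standardizes the M\"obius-band neighbourhood of an orientation-reversing circle (the cross-cap may be knotted or carry a non-standard framing), but your parenthetical fallback --- reaching a standard model containing an $R_{\pm}$ summand by a homotopy and then cancelling the resulting power of $\V$ in the domain $\cR$ --- is exactly what is needed, so no genuine gap remains.
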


\section{The unknot and unlinks}

The instanton homology
$\Isharp(K; \Gamma_{\s})$ is a free $\cS$-module of rank $2$ when $K$ is the
unknot, and it is a free module of rank $2^{n}$ for the
$n$-component unlink. Although establishing these statements is not
hard, we will need a little more for application in our spectral sequence in the
following section: we need to make these isomorphisms canonical, to
the extent that is possible. For this task, our exposition will follow
\cite{KM-unknot} to begin with. However, there is a little more
subtlety now, even in the case of the unknot. This stems in
part from the fact that $\Isharp(K ;\Gamma)$ is only $\Z/2$ graded
(there is no $\Z/4$ grading as there was in \cite{KM-unknot}), and
the two generators for the unknot are in the same grading mod $2$, so
we cannot use the grading decomposition to pick out canonical generators.

\subsection{Spheres with dots}

Let $S \subset \R^{4}$ be an embedded sphere. Choose one orientation,
and let $q_{1}$, \dots, $q_{d}$ be dots on $S$ whose orientation
agrees with the chosen orientation of $S$. We wish to evaluate
the corresponding map on the homology of the empty link, which we
regard as defining an element
\[
       \Isharp((S; q_{1}, \dots, q_{d}); \Gamma_{\s}) \in \cS
\]
where $\Gamma_{\s} = \Gamma\otimes_{\s} \cS$. By
Proposition~\ref{prop:homotopy}, the evaluation is independent of the embedding.

\begin{lemma}\label{lem:S-eval}
    The evaluation $\epsilon_{d}$ of the sphere with $d$ dots is $0$ for $d=0$, and\/
    $1$ for $d=1$. For $d\ge 2$, it satisfies the recurrence relation
   \[
           \epsilon_{d} = \s(P) \epsilon_{d-1} + \s(Q) \epsilon_{d-2}. 
    \]
\end{lemma}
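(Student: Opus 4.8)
The plan is to compute the evaluation $\epsilon_d$ by a neck-stretching argument that relates a sphere with $d$ dots to one with $d-1$ and $d-2$ dots. First I would establish the base cases. For $d=0$, the map $\Isharp(S;\Gamma_\s)\colon\cS\to\cS$ is the composite of a cup map (birth of an unknot) and a cap map (death of the unknot); as in \cite{KM-unknot}, this composite is the trace of the identity on the rank-$2$ module $\Isharp(\text{unknot};\Gamma_\s)$ with respect to the generators coming from the cup/cap, and it vanishes because the cap kills the ``bottom'' generator and the cup produces only that generator --- equivalently, an embedded sphere bounds a ball, so the associated closed $4$-manifold pair has a moduli space whose dimension/reducibility forces the count to be zero. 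For $d=1$, the single dot inserts the operator $\Lambda_q$ (Definition~\ref{def:Lambda3}) between cup and cap; by the surfaces-with-dots discussion in section~\ref{subsec:surfaces-with-dots}, $\Lambda_q$ interchanges the two generators up to the unit, so the composite cup-$\Lambda_q$-cap evaluates to $1$.

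Next, for $d\ge 2$, I would use the relation from Corollary~\ref{cor:norm-Lambda-reln}: on $\Isharp(K;\Gamma_\s)$ one has $\Lambda_q^2 + \s(P)\,\Lambda_q + \s(Q) = 0$. Picture the sphere $S$ with its $d$ dots $q_1,\dots,q_d$ all lying near a common small disk, and stretch a neck separating $q_1$ from the rest. By the functoriality of section~\ref{subsec:functoriality} and the fact (Proposition~\ref{prop:homotopy}) that the evaluation depends only on the topological type, the sphere with $d$ dots computes $\operatorname{tr}(\Lambda_q^{\,d})$ on $\Isharp(\text{unknot};\Gamma_\s)$ --- more precisely it equals the value of the cap composed with $\Lambda_q^{\,d}$ composed with the cup. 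Now multiply the quadratic relation by $\Lambda_q^{\,d-2}$ to obtain $\Lambda_q^{\,d} = \s(P)\,\Lambda_q^{\,d-1} + \s(Q)\,\Lambda_q^{\,d-2}$ (we are in characteristic $2$, so the sign is immaterial), and apply the cup and cap maps to both sides. Since cup and cap are $\cS$-linear, this yields exactly
\[
   \epsilon_d = \s(P)\,\epsilon_{d-1} + \s(Q)\,\epsilon_{d-2},
\]
as required.

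The one point that needs care --- and which I expect to be the main obstacle --- is the identification of the ``dot'' operator appearing in the sphere evaluation with the operator $\Lambda_q$ for which the quadratic relation of Corollary~\ref{cor:norm-Lambda-reln} was proved. The quadratic relation was established for $\Lambda_q$ built from a dot $q$ on $K$ together with the fixed dots $p_1,p_2,p_3$ on the three edges of $\theta$; in the present setting the ``$K$'' is the unknot produced by the cup, and all $d$ dots sit on the sphere $S$, so one must check (as in section~\ref{subsec:surfaces-with-dots}, using paths on $S$ to a point of the boundary unknot) that each dotted facet contributes precisely a factor $\Lambda_q$, with the same normalization, so that $d$ dots give $\Lambda_q^{\,d}$. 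Once that translation is in hand, the recurrence is immediate, and the only remaining routine verification is that the cup map lands in, and the cap map is nonzero on, the appropriate cyclic $\cS[\Lambda_q]$-submodule generated by the cup generator, so that applying cup/cap to the operator identity is legitimate and loses no information. This is handled exactly as the analogous computation for the unknot in \cite{KM-unknot}, adapted to the $\Z/2$-graded, local-coefficient setting.
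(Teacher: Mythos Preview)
Your approach for the recurrence when $d\ge 2$ is exactly the paper's: both invoke the quadratic relation of Corollary~\ref{cor:norm-Lambda-reln} for $\Lambda_{q}$ and translate dots on $S$ into powers of $\Lambda_{q}$ via section~\ref{subsec:surfaces-with-dots}. For $d=0$ your second justification (dimension/reducibility of the moduli space) is also what the paper uses.

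The genuine problem is your argument for $d=1$. You write that ``$\Lambda_q$ interchanges the two generators up to the unit, so the composite cup--$\Lambda_q$--cap evaluates to $1$.'' But in the paper's logical order, the identification of $\Isharp(U_{1};\Gamma_{\s})$ as a free rank-$2$ module with basis $\bx_{+}=\text{cup}(\bu_{0})$ and $\bx_{-}=\Lambda_{q}\bx_{+}$ is Lemma~\ref{lem:unknot-1}, and its proof \emph{uses} Lemma~\ref{lem:S-eval}: the pairing matrix between $D^{\pm}$ and $(D^{\pm};q)$ is computed from the sphere evaluations $\epsilon_{0},\epsilon_{1},\epsilon_{2}$. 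So appealing to the action of $\Lambda_{q}$ on the ``two generators'' of the unknot module at this stage is circular. (The same circularity is latent in your first phrasing for $d=0$, as a trace on ``the rank-$2$ module'', though your alternative dimension argument avoids it there.)

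The paper sidesteps this by computing $\epsilon_{1}$ directly on the moduli space, without ever passing through $\Isharp(U_{1})$: for the standard embedding of $S^{2}\subset S^{4}$, the $\kappa=0$ moduli space of anti-self-dual bifold connections consists of flat connections and is a $2$-sphere, and the cohomology class $\lambda_{q}$ is constructed precisely so that it pairs to $1$ with this $2$-sphere. You should replace your $d=1$ step with this direct evaluation; once $\epsilon_{0}$ and $\epsilon_{1}$ are established independently, your argument for the recurrence (and the paper's) goes through, and only \emph{then} can Lemma~\ref{lem:unknot-1} be proved.
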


\begin{proof}
    The formal dimension of the relevant moduli space is positive when
    the 
    Yang-Mills action $\kappa$ is zero, so for $d=0$ the evaluation is
    zero. For $d=1$, we use the fact that the $\kappa=0$ moduli space
    parametrizes flat connections and is a $2$-sphere when $S$ has the
    standard embedding. The cohomology class $\lambda_{q}$ is set up
    so that it evaluates to $1$ on this $2$-sphere. For $d\ge 2$, the
    recurrence relation follows from Lemma~\ref{cor:Lambda-reln}.
\end{proof}

\subsection{The empty knot and the unknot}

As in \cite{KM-unknot}, we write $U_{n}$ for a standard unlink in
$\R^{3}$ with $n$ components, so that $U_{0}$ is the empty link and
$U_{1}$ is the unknot. We take $U_{n}$ to be the union of standard
circles in the $(x,y)$ plane, each of diameter $1/2$, and centered on
the first $n$ integer lattice points along the $x$ axis.  We orient
the circles of $U_{n}$ by a standard choice, say anti-clockwise in the
$(x,y)$ plane.

For the empty link, $\Isharp(U_{0};\Gamma_{\s})$ is free of rank $1$,
and we can canonically choose an identification with $\cS$, or
equivalently a generator 
\[
       \bu_{0} \in \Isharp(U_{0};\Gamma_{\s}).
\]

\begin{lemma}\label{lem:unknot-1}
    For the unknot $U_{1}$, the instanton homology $\Isharp(U_{1}; \Gamma_{\s})$ is free of rank
    $2$. As generators, we can take the image of $\bu_{0}$ under the
    two maps
   \[
            \Isharp(U_{0};\Gamma_{\s}) \to  \Isharp(U_{1};\Gamma_{\s})
   \]
 given by (a) a standard disk $D^{+}$ with boundary $U_{1}$; or (b)
 the disk $D^{+}$ decorated with a dot $q$ whose local orientation
 arises from our choice of orientation for the knot.
 \end{lemma}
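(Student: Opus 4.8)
The plan is to compute the $\cS$-module $\Isharp(U_1;\Gamma_\s)$ by a neck-stretching/excision argument, exactly along the lines of the corresponding step in \cite{KM-unknot}, and then to show that the two indicated classes span it freely. First I would establish that $\Isharp(U_1;\Gamma_\s)$ is free of rank $2$ over $\cS$. One clean route is to cap $U_1$ off on both sides: the composite cobordism $D^{+}$ from $U_0$ to $U_1$ followed by the reversed disk $D^{-}$ from $U_1$ back to $U_0$ is a sphere $S\subset \R^{4}$, and by Lemma~\ref{lem:S-eval} (with $d=0$) the induced map $\cS\to\cS$ is multiplication by $\epsilon_0=0$, while the sphere with one dot gives $\epsilon_1=1$. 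The same bookkeeping of small-action moduli spaces on $(S^4,\emptyset)$ that underlies Lemma~\ref{lem:S-eval} shows, when one leaves the neck between the two disks un-collapsed, that the generator $\bu_0$ and its image under a single dot on the tube generate a rank-$2$ free summand; and a dimension/energy bound shows there is nothing else, so $\Isharp(U_1;\Gamma_\s)\cong\cS^{2}$. (Alternatively one invokes the constant-coefficient computation $\Isharp(U_1;\F_2)\cong\F_2^2$ from \cite{KM-unknot} together with the universal-coefficient spectral sequence for the local system, but the moduli-space argument is what makes the generators explicit.)

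Next I would identify the two specific generators named in the statement. Write $x = \Isharp(D^{+};\Gamma_\s)(\bu_0)$ and $y = \Isharp((D^{+};q);\Gamma_\s)(\bu_0)$. By the discussion in section~\ref{subsec:surfaces-with-dots}, adding the dot $q$ on $D^{+}$ has the effect of precomposing (equivalently postcomposing, since $D^{+}$ is connected to $U_1$) with the operator $\Lambda_{q}$ of Definition~\ref{def:Lambda3}; thus $y = \Lambda_{q}\,x$. So the pair $(x,y) = (x,\Lambda_q x)$, and the claim is that this pair is an $\cS$-basis. To see this, I would pair against the "evaluation" maps given by capping with the two disks $D^{-}$ and $(D^{-};q')$ on the other side: using Lemma~\ref{lem:S-eval} and Corollary~\ref{cor:Lambda-reln}, the four numbers
\[
\begin{pmatrix}
\langle D^{-}, x\rangle & \langle D^{-}, y\rangle\\[2pt]
\langle (D^{-};q'), x\rangle & \langle (D^{-};q'), y\rangle
\end{pmatrix}
=
\begin{pmatrix}
\epsilon_0 & \epsilon_1\\
\epsilon_1 & \epsilon_2
\end{pmatrix}
=
\begin{pmatrix}
0 & 1\\
1 & \s(P)
\end{pmatrix}
\]
form a matrix of determinant $1$ (we are in characteristic $2$). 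Hence the map $\cS^{2}\to\cS^{2}$ sending a generator pair of the (already-known) free module $\Isharp(U_1;\Gamma_\s)$ to $(x,y)$ is invertible over $\cS$, so $(x,y)$ is indeed a basis.

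The main obstacle is the first step: making precise, with local coefficients, that the neck-stretched picture of the two disks glued along $U_1$ produces exactly a rank-$2$ free module with $\bu_0$ and its dotted image as generators, and that no higher-action contributions intervene. This is where one must be careful that the spectral flow is only $\Z/2$-graded, so one cannot separate the two generators by degree — precisely the subtlety flagged in the preamble to this section. The resolution is the energy/dimension bound: the relevant moduli space on $(S^4,\emptyset)$ with $\kappa>0$ has strictly positive formal dimension, so only the $\kappa=0$ (flat) stratum contributes to the relevant matrix entries, and on that stratum the computation is the one already carried out for Lemma~\ref{lem:S-eval}. Everything else — the identification $y=\Lambda_q x$ and the determinant computation — is then a formal consequence of Corollary~\ref{cor:Lambda-reln}, Lemma~\ref{lem:S-eval}, and the functoriality of section~\ref{subsec:functoriality}.
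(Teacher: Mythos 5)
Your second step --- writing $x = \Isharp(D^{+};\Gamma_{\s})(\bu_{0})$ and $y=\Lambda_{q}x$, pairing against the evaluations given by $D^{-}$ and $(D^{-};q')$, and observing via Lemma~\ref{lem:S-eval} that the resulting matrix
\[
\begin{pmatrix} 0 & 1 \\ 1 & \s(P)\end{pmatrix}
\]
has determinant $1$ --- is exactly the paper's argument and is correct. The gap is in your first step. An invertible pairing matrix shows only that $x$ and $y$ span a free rank-$2$ \emph{direct summand} of $\Isharp(U_{1};\Gamma_{\s})$: it is a lower bound, never an upper bound. Your claim that ``a dimension/energy bound shows there is nothing else'' cannot be made to work, because moduli spaces on the closed-up pair in $S^{4}$ only ever compute the compositions $\cS\to\Isharp(U_{1};\Gamma_{\s})\to\cS$, and these are blind to any complement of the summand generated by $x$ and $y$. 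The parenthetical alternative via universal coefficients also does not close the gap as stated: $\cR$ is not a local ring, so a module $N$ with $N\otimes_{\cR}\F_{2}=0$ for the augmentation $T_{i}\mapsto 1$ need not vanish, and one cannot conclude from $\dim_{\F_{2}}\Isharp(U_{1};\F_{2})=2$ that the complementary summand is zero.

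The missing ingredient --- which is the first sentence of the paper's proof --- is an upper bound obtained at the chain level: the perturbed Chern--Simons functional on $\bonf^{\sharp}(U_{1})$ has a perturbation with exactly two critical points, so the Floer complex is a free $\cS$-module of rank $2$ and the homology is a subquotient of $\cS^{2}$, hence of rank at most $2$. Combined with your determinant computation, this forces the rank to equal $2$, the differential to vanish, and the homology to be free on $x$ and $y$. With that one fact supplied, the rest of your argument goes through as written.
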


 \begin{proof}
     The Chern-Simons functional has a perturbation with just two
     critical points. So the rank is at most $2$; and equality can
     hold only if it is a free module. Let $D^{-}$ be a
     disk providing a cobordism from $U_{1}$ to $U_{0}$, and let $q'$
     be a dot on $D^{-}$. Using Lemma~\ref{lem:S-eval}, we can
     compute the pairings between the cobordisms $D^{+}$, $(D^{+}, q)$
     on the one side, and the cobordisms $D^{-}$, $(D^{-}, q')$ on the
     other. The result is the matrix
     \[
     \begin{pmatrix}
         0 & 1 \\ 1 & P
     \end{pmatrix},
     \]
     whose determinant is $1$. It follows that the rank of the module
     is $2$, the images of $D^{+}$ and $(D^{+}; q)$ are generators.
 \end{proof}

 \begin{definition}\label{def:xy-U1}
     We write $V$ for the rank-2 $\cS$-module $\Isharp(U_{1};
     \Gamma_{\s})$. Define \[\bx_{+}, \; \bx_{-} \in \Isharp(U_{1}; \Gamma_{\s})\]
     to be the images of $\bu_{0}$ under the maps arising from the
     cobordisms $D^{+}$ and $(D^{+}; q)$. They form a basis for this
     free module, by the lemma. In the dual module, we
     define
     \[
              \by_{+} , \; \by_{-}:  V \to \cS
      \]
     using respectively the cobordisms $D^{-}$ and $(D^{-}, q)$.
    \qed
 \end{definition}

The proof of the previous lemma gives the pairings between $\bx_{\pm}$
and $\by_{\pm}$, and from the knowledge of those pairings we obtain:

\begin{lemma}
    The dual basis to the basis $(\bx_{+}$, $\bx_{-})$ for the free
    module $V=\Isharp(U_{1};\Gamma_{\s})$ is the basis $(\by_{-} + \s(P) \,\by_{+},  \by_{+})$.\qed
\end{lemma}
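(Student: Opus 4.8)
The plan is to assemble the full $2\times 2$ matrix of pairings between the basis $(\bx_{+},\bx_{-})$ of $V=\Isharp(U_{1};\Gamma_{\s})$ and the two functionals $(\by_{+},\by_{-})$, and then to invert it. By functoriality of $\Isharp$ for cobordisms with dots (section~\ref{subsec:surfaces-with-dots}), the scalar $\by_{i}(\bx_{j})$ is the evaluation on $\bu_{0}$ of the closed cobordism obtained by stacking the relevant disk from Definition~\ref{def:xy-U1} above the corresponding disk; this closed cobordism is a $2$-sphere carrying $d$ dots, where $d\in\{0,1,2\}$ is the total number of dots contributed by the two disks. Since the dot on $D^{+}$ and the dot on $D^{-}$ were both chosen with the local orientation coming from the fixed orientation of $U_{1}$, all dots on the glued sphere agree with one orientation of it, so Lemma~\ref{lem:S-eval} applies and yields $\by_{i}(\bx_{j})=\epsilon_{d}$.

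Next I would read off the four entries from Lemma~\ref{lem:S-eval}: $\by_{+}(\bx_{+})=\epsilon_{0}=0$, $\by_{+}(\bx_{-})=\by_{-}(\bx_{+})=\epsilon_{1}=1$, and $\by_{-}(\bx_{-})=\epsilon_{2}=\s(P)\epsilon_{1}+\s(Q)\epsilon_{0}=\s(P)$. Thus the matrix $M$ with entries $M_{ij}=\by_{i}(\bx_{j})$ is
\[
    M=\begin{pmatrix} 0 & 1 \\ 1 & \s(P)\end{pmatrix},
\]
the same matrix recorded in the proof of Lemma~\ref{lem:unknot-1}. Its determinant is $1$, so $M$ is invertible over $\cS$, and in characteristic $2$ one checks directly that $M^{-1}=\begin{pmatrix}\s(P)&1\\1&0\end{pmatrix}$.

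Finally I would extract the dual basis. A functional $f=c_{+}\by_{+}+c_{-}\by_{-}$ has $\bigl(f(\bx_{+}),f(\bx_{-})\bigr)=(c_{+},c_{-})\,M$, so the dual generator paired with $\bx_{+}$ corresponds to $(c_{+},c_{-})=(1,0)\,M^{-1}=(\s(P),1)$, namely $\by_{-}+\s(P)\,\by_{+}$, and the dual generator paired with $\bx_{-}$ corresponds to $(c_{+},c_{-})=(0,1)\,M^{-1}=(1,0)$, namely $\by_{+}$. This is precisely the basis $(\by_{-}+\s(P)\,\by_{+},\ \by_{+})$ asserted in the lemma. The step I expect to require the most care is the first one: confirming that the composite of the two disk cobordisms is the standard dotted sphere of Lemma~\ref{lem:S-eval} with the dot orientations matching, so that the pairing really is $\epsilon_{d}$ rather than the evaluation of a sphere with a reversed dot; this is settled by the orientation conventions fixed in Definition~\ref{def:xy-U1} together with the identification, at the end of section~\ref{subsec:surfaces-with-dots}, of a dot with a local orientation of the surface.
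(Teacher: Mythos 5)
Your proposal is correct and follows exactly the route the paper intends: it computes the pairing matrix $\begin{pmatrix}0&1\\1&\s(P)\end{pmatrix}$ from the sphere-with-dots evaluations of Lemma~\ref{lem:S-eval} (as recorded in the proof of Lemma~\ref{lem:unknot-1}) and inverts it over $\cS$ in characteristic $2$ to read off the dual basis. The extra care you take with the dot orientations on the glued sphere is a reasonable elaboration of a point the paper leaves implicit.
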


\subsection{The homology of the unlink}

Having identified $\Isharp(U_{1}; \Gamma_{\s})$ as the free module
$V=\langle \bx_{+}, \bx_{-}\rangle$, we can examine the $n$-component
unlink $U_{n}$ using the strategies from \cite{KM-unknot}.

\begin{lemma}[Corollary~8.5 of \cite{KM-unknot}]
\label{lemma:unlink-tensor-n}
We have an isomorphism of\/ $\cS$-modules,
    \[
        \Phi_{n}: V^{\otimes n} \to \Isharp(U_{n};\Gamma_{\s})       ,
     \]
      for all $n$,
     with the following  properties. First, 
     if\/ $D^{+}_{n}$  denotes the cobordism from $U_{0}$ to $U_{n}$
     obtained from standard disks as in the previous lemma, then
     \[
                  \Isharp(D^{+}_{n};\Gamma_{\s})(\bu_{0}) = 
                           \Phi_{n}( \bx_{+}\otimes \dots \otimes \bx_{+}).
    \]
     Second, the isomorphism is natural for split cobordisms, perhaps
     with dots,  from
     $U_{n}$ to itself. Here, a ``split'' cobordism means a cobordism
     from $U_{n}$ to $U_{n}$ in $[0,1]\times \R^{3}$ which is the
     disjoint union of $n$ cobordisms from $U_{1}$ to $U_{1}$, each
     contained in a standard ball\/ $[0,1]\times B^{3}$. 
\end{lemma}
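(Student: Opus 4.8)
The plan is to construct $\Phi_{n}$ explicitly from split disk cobordisms, to check it is an isomorphism by exhibiting a unimodular intersection pairing, and then to read off the naturality, following \cite[Section~8]{KM-unknot}; the local system $\Gamma_{\s}$ will enter only through the weights $\epsilon([A])$ of \eqref{eq:T-cobordism}, which behave multiplicatively under the split unions involved. For each subset $s\subseteq\{1,\dots,n\}$, let $D^{+}_{s}:U_{0}\to U_{n}$ be the split union of $n$ standard disks (as in Lemma~\ref{lem:unknot-1}), with a single dot placed on the $i$-th disk exactly when $i\in s$, the local orientation taken from the chosen orientation of $U_{n}$; let $D^{-}_{s}:U_{n}\to U_{0}$ be the corresponding reversed decorated disks. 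Write $\bx_{s}=v_{1}\otimes\cdots\otimes v_{n}\in V^{\otimes n}$, where $v_{i}=\bx_{-}$ if $i\in s$ and $v_{i}=\bx_{+}$ otherwise, so that $\{\bx_{s}\}_{s}$ is the standard basis of the free module $V^{\otimes n}$; similarly write $\by_{s}\in(V^{\otimes n})^{*}$ for the tensor of $\by_{\pm}$'s. I would \emph{define} $\Phi_{n}(\bx_{s})=\Isharp(D^{+}_{s};\Gamma_{\s})(\bu_{0})$. With this definition the first asserted property, $\Isharp(D^{+}_{n};\Gamma_{\s})(\bu_{0})=\Phi_{n}(\bx_{+}\otimes\cdots\otimes\bx_{+})$, holds by inspection, since $D^{+}_{n}=D^{+}_{\emptyset}$.

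To see that $\Phi_{n}$ is an isomorphism I would compute the composite cobordisms $D^{-}_{t}\circ D^{+}_{s}:U_{0}\to U_{0}$. Because all the pieces are split, this composite is the disjoint union of $n$ two-spheres, the $i$-th carrying $a_{i}:=\mathbf{1}_{i\in s}+\mathbf{1}_{i\in t}\in\{0,1,2\}$ dots, lying in a ball disjoint from $\theta$ and from $\omega$. Isotoping the spheres into disjoint time-slabs and using functoriality together with Proposition~\ref{prop:homotopy}, the induced map $\cS\to\cS$ is multiplication by $\prod_{i=1}^{n}\epsilon_{a_{i}}$, and by Lemma~\ref{lem:S-eval} we have $\epsilon_{0}=0$, $\epsilon_{1}=1$, $\epsilon_{2}=\s(P)$. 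Equivalently, $\by_{\sigma}(\bx_{\rho})=\epsilon_{\mathbf{1}_{\sigma=-}+\mathbf{1}_{\rho=-}}$ as in Definition~\ref{def:xy-U1}, and the transition matrix whose $(t,s)$ entry is $\Isharp(D^{-}_{t}\circ D^{+}_{s};\Gamma_{\s})$ equals the $n$-fold Kronecker power $\left(\begin{smallmatrix}0&1\\1&\s(P)\end{smallmatrix}\right)^{\otimes n}$, whose determinant is $1$ in $\cS$. As in \cite{KM-unknot}, the perturbed Chern--Simons functional for $U_{n}^{\sharp}$ has exactly $2^{n}$ critical points, so $\Isharp(U_{n};\Gamma_{\s})$ is generated over $\cS$ by at most $2^{n}$ elements; a module with such a generating set carrying a pairing of rank $2^{n}$ with unimodular Gram matrix must be free of rank $2^{n}$ with the $\Isharp(D^{+}_{s};\Gamma_{\s})(\bu_{0})$ forming a basis. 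Hence $\Phi_{n}$ carries a basis to a basis and is an isomorphism; in particular the $2^{n}$ functionals $\Isharp(D^{-}_{t};\Gamma_{\s})$ together define an injection $\Isharp(U_{n};\Gamma_{\s})\hookrightarrow\cS^{2^{n}}$.

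For the naturality, let $\Sigma=\Sigma_{1}\amalg\cdots\amalg\Sigma_{n}$ be a split cobordism from $U_{n}$ to itself with each $\Sigma_{i}:U_{1}\to U_{1}$ in its own standard ball, optionally decorated with dots. Since the $\Isharp(D^{-}_{t};\Gamma_{\s})$ jointly detect elements, it suffices to show that $\Isharp(D^{-}_{t};\Gamma_{\s})$ agrees on $\Isharp(\Sigma;\Gamma_{\s})\Phi_{n}(\bx_{s})$ and on $\Phi_{n}\bigl(\Isharp(\Sigma_{1};\Gamma_{\s})\otimes\cdots\otimes\Isharp(\Sigma_{n};\Gamma_{\s})\bigr)(\bx_{s})$, for all $s,t$. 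On the first side, $D^{-}_{t}\circ\Sigma\circ D^{+}_{s}$ is again split, namely the disjoint union of the $n$ closed surfaces $d^{-}_{t_{i}}\circ\Sigma_{i}\circ d^{+}_{s_{i}}$ (single disks capping $\Sigma_{i}$, with the appropriate dots), so the time-slab argument gives that it induces multiplication by $\prod_{i}\by_{t_{i}}\bigl(\Isharp(\Sigma_{i};\Gamma_{\s})\,\bx_{s_{i}}\bigr)$. On the second side, expanding each $\Isharp(\Sigma_{i};\Gamma_{\s})\bx_{s_{i}}$ in the basis $\{\bx_{\pm}\}$, applying $\Phi_{n}$, and then pairing with $\Isharp(D^{-}_{t};\Gamma_{\s})$ using $\by_{\sigma}(\bx_{\rho})=\Isharp(D^{-}_{\{\sigma=-\}}\circ D^{+}_{\{\rho=-\}};\Gamma_{\s})$ gives the identical product $\prod_{i}\by_{t_{i}}\bigl(\Isharp(\Sigma_{i};\Gamma_{\s})\bx_{s_{i}}\bigr)$. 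Hence the two sides coincide, which is the claimed naturality.

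The main obstacle is the neck-stretching/excision input used repeatedly above: that $\Isharp$ with the local system is multiplicative on split unions and on cobordisms factoring through disjoint time-slabs. In the constant-coefficient case this is exactly the content of the unlink computation of \cite[Section~8]{KM-unknot}, proved via the bifold excision theorem; the only extra point here is that the weights \eqref{eq:T-cobordism} are exponentials of curvature integrals $\nu_{0},\dots,\nu_{3}$ and of self-intersection corrections that are additive over disjoint unions, and that along the standard disks — which lie in a ball disjoint from $\theta$ and $\omega$ and meet the framing data trivially — all of these contributions vanish, so $\Gamma_{\s}$ is ``split-multiplicative'' in the required sense. Everything else is bookkeeping with the Gram matrices.
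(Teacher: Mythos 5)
Your proof is correct and follows essentially the same route as the paper: the paper defers to \cite{KM-unknot} and its own Lemma~\ref{lem:unknot-1}, whose argument for $n=1$ is exactly your combination of split disk-with-dot cobordisms, the unimodular Gram matrix coming from the sphere evaluations of Lemma~\ref{lem:S-eval}, and the bound on generators from the critical-point count. The one external input you invoke --- that the perturbed Chern--Simons functional for $U_{n}^{\sharp}$ admits a perturbation with $2^{n}$ critical points --- is the same input the paper itself relies on from \cite{KM-unknot}.
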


\begin{proof}
    This is essentially the same as the version in
    \cite{KM-unknot}. Note that the trivial cobordism from $U_{1}$ to
    $U_{1}$, equipped with a dot $q$ and an appropriate local
    orientation, gives the map $\Lambda_{q}:V\to V$ which maps $\bx_{+}$ to
    $\bx_{-}$. 
\end{proof}

The next lemma and its corollary are also drawn directly from
\cite{KM-unknot}, and establish that the isomorphism of the previous
lemma is canonical, once the unlink has been oriented.

\begin{lemma}
    Let $S$ be an oriented concordance from the standard unlink $U_{n}$ to
    itself, consisting of $n$ oriented annuli in $[0,1]\times
    \R^{3}$. 
    Let $\tau$ be the permutation of $\{1,\dots,n\}$
    corresponding to the permutation of the components of $U_{n}$
    arising from $S$. Then the standard isomorphism $\Phi_{n}$ of
    Lemma~\ref{lemma:unlink-tensor-n} intertwines the map
     \[
              \Isharp(S;\Gamma_{\s}) : \Isharp(U_{n};\Gamma_{\s}) \to \Isharp(U_{n};\Gamma_{\s})
     \]
     with the permutation map
     \[
                 \tau_{*} : V\otimes \dots\otimes V \to  V\otimes
                 \dots \otimes V.
      \]
      In particular, if the permutation $\tau$ is the identity, then
      $\Isharp(S;\Gamma_{\s})$ is the identity.
\end{lemma}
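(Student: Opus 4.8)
The plan is to reduce the statement to a computation about cobordisms between copies of the unknot $U_1$, and then invoke the naturality already established in Lemma~\ref{lemma:unlink-tensor-n} for split cobordisms together with Proposition~\ref{prop:homotopy} on homotopy invariance. First I would decompose the given concordance $S$ into simpler pieces. The permutation $\tau$ arising from $S$ can be written as a product of transpositions of adjacent components, and by isotoping $S$ (which does not change the induced map) I may assume that $S$ is a composite of ``elementary'' concordances, each of which either (a) is a split cobordism permuting nothing — hence handled directly by the naturality clause of Lemma~\ref{lemma:unlink-tensor-n} — or (b) realizes a single transposition of two adjacent components of $U_n$ while acting as the identity on all the others. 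Since $\Phi_n$ is built as an $n$-fold tensor product and the maps in question are local, it suffices to treat the case $n=2$ and show that the concordance which swaps the two components of $U_2$ induces the flip $V\otimes V\to V\otimes V$, $v\otimes w\mapsto w\otimes v$.

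For this $n=2$ case I would argue as follows. The concordance $S$ swapping the two components of $U_2$ is, up to isotopy rel boundary, a pair of annuli in $[0,1]\times\R^3$ that ``braid past'' each other; composing $S$ with itself gives a concordance that induces the identity permutation, hence by the naturality clause is the identity map on $V\otimes V$. So $\Isharp(S;\Gamma_{\s})$ is an involution of $V\otimes V$. To pin it down I would test it against the standard generators. Using the disk cobordisms $D^+$ and dotted disks $(D^+;q)$ and their duals $D^-$, $(D^-;q')$ from Definition~\ref{def:xy-U1}, capping off $S$ at one or both ends produces closed (possibly dotted) surfaces in $\R^4$ whose evaluations are computed by Lemma~\ref{lem:S-eval} and Proposition~\ref{prop:homotopy}; these depend only on the topological type of the closed components and the number of dots on each. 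Since the swap $S$ merely relabels which disk is attached to which component, the evaluations obtained by capping $S$ with $\bx_{\pm}\otimes\bx_{\pm}$ on the bottom and $\by_{\pm}\otimes\by_{\pm}$ on top agree with those obtained from the trivial cobordism after applying the flip. This determines $\Isharp(S;\Gamma_{\s})$ on the basis $\{\bx_+\otimes\bx_+,\ \bx_+\otimes\bx_-,\ \bx_-\otimes\bx_+,\ \bx_-\otimes\bx_-\}$ and identifies it with $\tau_*$.

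Finally, for general $n$ I would assemble the elementary pieces: write $S$ as a composite $S_1\circ\cdots\circ S_k$ where each $S_j$ is split except in a standard sub-cylinder where it performs the $n=2$ swap computed above, and use functoriality of $\Isharp(\,\cdot\,;\Gamma_{\s})$ together with the fact that $\Phi_n$ intertwines all of these with the corresponding maps on $V^{\otimes n}$. The composite on the $V^{\otimes n}$ side is precisely $\tau_*$, since each elementary piece contributes an adjacent transposition and $\tau$ is their product. When $\tau$ is the identity, $S$ is homotopic rel boundary to the trivial concordance, so $\Isharp(S;\Gamma_{\s})$ is the identity directly from Proposition~\ref{prop:homotopy} and Lemma~\ref{lemma:unlink-tensor-n}.

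The main obstacle I anticipate is the $n=2$ swap computation: one must be careful that the concordance inducing the transposition really is an honest embedded concordance (annuli, no double points) and that the capping-off argument genuinely pins down all four matrix entries rather than merely showing the map is an involution. In particular one should check the off-diagonal entries — i.e.\ that the swap does not mix $\bx_+\otimes\bx_-$ with, say, $\bx_-\otimes\bx_+$ in some unexpected way involving $\s(P)$ — which is where the precise normalization of $\bx_{\pm}$, $\by_{\pm}$ in Definition~\ref{def:xy-U1} and the pairing matrix $\left(\begin{smallmatrix}0&1\\1&\s(P)\end{smallmatrix}\right)$ from Lemma~\ref{lem:unknot-1} must be used carefully.
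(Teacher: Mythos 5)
Your core argument --- cap off $S$ at both ends with the disks $D^{+}$, $(D^{+};q)$ and $D^{-}$, $(D^{-};q')$, observe that each annulus so capped becomes a $2$-sphere with some dots whose evaluation depends only on its topology and dot count by Proposition~\ref{prop:homotopy} and Lemma~\ref{lem:S-eval}, and then recover all matrix entries from the nondegeneracy of the pairing matrix $\bigl(\begin{smallmatrix}0&1\\1&\s(P)\end{smallmatrix}\bigr)$ --- is exactly the argument the paper invokes (it defers to the corresponding proof in \cite{KM-unknot}, with $\Lambda_{q}$ in place of $\sigma$). So the substance of your proof is correct and is the paper's proof.

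However, the two preliminary reductions you place in front of it are both unnecessary and not actually justified, and you should delete them. First, you cannot in general isotope $S$ to a composite of split pieces and elementary adjacent swaps: the annuli of a concordance from $U_{n}$ to itself may be knotted or linked in $[0,1]\times\R^{3}$, and such a concordance is homotopic but not isotopic to a standard one. Second, the claim that $S\circ S$ ``induces the identity permutation, hence by the naturality clause is the identity map'' is circular: the naturality clause of Lemma~\ref{lemma:unlink-tensor-n} applies only to \emph{split} cobordisms (disjoint unions supported in standard balls), and $S\circ S$ need not be split --- knowing that a concordance inducing the trivial permutation acts as the identity is precisely the ``in particular'' clause you are trying to prove. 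Fortunately neither reduction is needed: the capping argument applies directly to an arbitrary oriented concordance by annuli, for any $n$, since each capped component is a sphere regardless of how the annulus is embedded, and the resulting evaluations depend only on which bottom disk is joined to which top disk, i.e.\ on $\tau$. One point worth making explicit is where the orientation hypothesis enters: because $S$ is an \emph{oriented} concordance, the local orientations of the dots on the top and bottom disks of each capped sphere are compatible with a single orientation of that sphere, so the evaluation is $\epsilon_{d}$ with $d$ the total number of dots; if a component of $S$ reversed orientation, the evaluations would instead involve $\Lambda'=\Lambda+\s(P)$, which is the content of the proposition on $\iota$ that follows this lemma in the paper.
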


\begin{proof}
    The proof leverages Proposition~\ref{prop:homotopy}, and is the
    same as the proof in \cite{KM-unknot}, with the dot
    operator $\Lambda_{q}$ replacing the operator $\sigma$ (equation
    (56) in \cite{KM-unknot}).
\end{proof}

\begin{corollary}\label{cor:unlink-canonical}
    Let $\cU_{n}$ be any oriented link in the link-type of\/ $U_{n}$, and
    let its components be enumerated. Then there is a canonical
    isomorphism
    \[
             \Psi_{n} :  V\otimes \dots \otimes V \to \Isharp(\cU_{n};\Gamma_{\s})
    \]
    which can be described as $ \Isharp(S;\Gamma_{\s}) \circ \Phi_{n}$, where
    $\Phi_{n}$ is the standard isomorphism of
     Lemma~\ref{lemma:unlink-tensor-n} and $S$ is any cobordism from
    $U_{n}$ to $\cU_{n}$ arising from an isotopy from $U_{n}$ to
    $\cU_{n}$, respecting the orientations and the enumeration of
    the components. 
    
    If the enumeration of the components of $\cU_{n}$ is changed by a
    permutation $\tau$, then the isomorphism $\Psi_{n}$ is changed
    simply by composition with the corresponding permutation of the
    factors in the tensor product. \qed
\end{corollary}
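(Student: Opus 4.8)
The plan is to deduce both assertions directly from the preceding lemma --- the one determining the effect on $\Isharp$ of a concordance of the standard unlink $U_{n}$ to itself --- together with the functoriality of $\Isharp(\,\cdot\,;\Gamma_{\s})$ for cobordisms; no new gauge theory should be required.

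First I would check that $\Psi_{n}=\Isharp(S;\Gamma_{\s})\circ\Phi_{n}$ is independent of the choice of isotopy cobordism $S$ from $U_{n}$ to $\cU_{n}$. Given two such cobordisms $S$ and $S'$, each arising from an isotopy that respects the orientations and the enumeration of the components, let $\bar{S}'$ denote the reversed isotopy, regarded as a cobordism from $\cU_{n}$ to $U_{n}$. Both composites $\bar{S}'\circ S$ and $\bar{S}'\circ S'$ are concordances from $U_{n}$ to itself, each a union of $n$ oriented annuli inducing the trivial permutation of components, so the preceding lemma and functoriality give
\[
   \Isharp(\bar{S}';\Gamma_{\s})\circ\Isharp(S;\Gamma_{\s})
      = \Isharp(\bar{S}';\Gamma_{\s})\circ\Isharp(S';\Gamma_{\s}) = \mathrm{id}.
\]
Meanwhile the composite $S'\circ\bar{S}'$ runs out along the isotopy and straight back, so it is isotopic rel boundary to the product cobordism of $\cU_{n}$, whence $\Isharp(S';\Gamma_{\s})\circ\Isharp(\bar{S}';\Gamma_{\s})=\mathrm{id}$ as well. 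Thus $\Isharp(\bar{S}';\Gamma_{\s})$ is a two-sided inverse of $\Isharp(S';\Gamma_{\s})$, and cancelling it on the left of the displayed identities shows $\Isharp(S;\Gamma_{\s})=\Isharp(S';\Gamma_{\s})$. In particular $\Isharp(S;\Gamma_{\s})$ is an isomorphism, and since $\Phi_{n}$ is an isomorphism by Lemma~\ref{lemma:unlink-tensor-n}, so is $\Psi_{n}$.

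Next I would treat the dependence on the enumeration. Suppose the enumeration of the components of $\cU_{n}$ is changed by a permutation $\tau$, and choose an isotopy cobordism $\tilde{S}$ from $U_{n}$ to $\cU_{n}$ realizing the new enumeration. Then $T=\bar{S}\circ\tilde{S}$ is a concordance from $U_{n}$ to itself, again a union of $n$ oriented annuli, whose induced permutation of components is $\tau$, so the preceding lemma gives $\Isharp(T;\Gamma_{\s})=\Phi_{n}\circ\tau_{*}\circ\Phi_{n}^{-1}$. Since $S\circ\bar{S}$ is an out-and-back isotopy, $S\circ T$ is isotopic rel boundary to $\tilde{S}$, so functoriality gives $\Isharp(\tilde{S};\Gamma_{\s})=\Isharp(S;\Gamma_{\s})\circ\Phi_{n}\circ\tau_{*}\circ\Phi_{n}^{-1}$; composing with $\Phi_{n}$ on the right, the new canonical isomorphism $\Isharp(\tilde{S};\Gamma_{\s})\circ\Phi_{n}$ equals $\Psi_{n}\circ\tau_{*}$, as required.

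The only step that calls for genuine care is the bookkeeping with reversed isotopies: one must confirm that concatenating an ambient isotopy with its reverse yields a cobordism isotopic rel boundary to a product cobordism, so that the preceding lemma (stated for honest concordances, i.e.\ unions of annuli) and the functoriality of $\Isharp$ can be applied without circularity. This is standard, since the out-and-back path in the space of embeddings is null-homotopic rel its endpoints; once it is in place, the remainder of the argument is purely formal.
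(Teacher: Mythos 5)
Your argument is correct and is exactly the deduction the paper leaves implicit (the corollary is stated with \qed as an immediate consequence of the preceding lemma): compose with a reversed isotopy to reduce both the well-definedness of $\Psi_{n}$ and the permutation statement to the lemma on concordances of $U_{n}$ to itself, using that out-and-back isotopies give product cobordisms. The care you take in producing a two-sided inverse before cancelling $\Isharp(\bar{S}';\Gamma_{\s})$ is exactly the right bookkeeping.
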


The corollary tells us that the homology of the unlink is canonically
isomorphic to the tensor product once an orientation of the components
has been chosen. The last thing we need to do here is determine the
dependencs of the isomorphism on the choice of orientation.

\begin{proposition}
    Let $S: U_{1} \to U_{1}$ be a cobordism arising from an isotopy of
    the standard unknot to itself which reverses the orientation. Then
    the resulting map $\iota=\Isharp(S; \Gamma_{\s}) : V \to V$ is given by
\[
\begin{aligned}
   \iota  : \bx_{+} &\mapsto \bx_{+} \\
    \iota  : \bx_{-} &\mapsto \s(P) \,\bx_{+} + \bx_{-} .
\end{aligned}
\]
\end{proposition}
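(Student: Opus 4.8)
The plan is to read off the effect of $\iota$ on the two basis vectors $\bx_{+},\bx_{-}$ of Definition~\ref{def:xy-U1} by composing their defining cobordisms with $S$ and then transporting dots onto the boundary, exactly as in Section~\ref{subsec:surfaces-with-dots}. First I would observe that $\iota$ does not depend on which orientation-reversing isotopy is used: any two such isotopies of $U_{1}$ to itself differ by an orientation-\emph{preserving} one, whose trace is an oriented annulus inducing the permutation-trivial concordance and hence the identity on $\Isharp$ by the concordance lemma proved above. So we may take $S$ to be the trace of a concrete model, say the rotation of $\R^{3}$ by $\pi$ about an axis lying in the plane of the standard round unknot.

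For $\bx_{+}$: by functoriality $\iota(\bx_{+}) = \Isharp(S\comp D^{+};\Gamma_{\s})(\bu_{0})$, and $S\comp D^{+}$ is a disk cobordism from the empty link to $U_{1}$, isotopic rel boundary to $D^{+}$. The only point needing care is the framing: the construction of $\Isharp(-;\Gamma_{\s})$ depends on a framing of $U_{1}$, and the canonical isomorphism comparing two framings is multiplication by a power of $T_{0}$. But reversing the orientation of an unknot leaves its framing unchanged (the self-linking number of a push-off is insensitive to reversal), so the exponent is $0$ and $\iota(\bx_{+}) = \bx_{+}$.

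For $\bx_{-}$: recall $\bx_{-}$ is the image of $\bu_{0}$ under the dotted cobordism $(D^{+};q)$, where the local orientation of the dot $q$ is the one induced by the chosen orientation of $U_{1}$. Thus $\iota(\bx_{-}) = \Isharp\bigl((S\comp D^{+};q);\Gamma_{\s}\bigr)(\bu_{0})$ with $q$ now in the interior of the disk $S\comp D^{+}$. Pushing $q$ along a path in the surface onto the outgoing boundary circle $U_{1}$, the induced local orientation is transported along; but the portion of the path running through $S$ realizes an orientation-reversing isotopy, so the orientation that arrives at the outgoing end is the opposite of the one we started with. Hence $q$ becomes the dot $\bar q_{1}$, and using $\iota(\bx_{+})=\bx_{+}$ together with the dot-pushing principle of Section~\ref{subsec:surfaces-with-dots},
\[
   \iota(\bx_{-}) = \Lambda_{\bar q_{1}}(\bx_{+}) = \Lambda'_{q_{1}}(\bx_{+}).
\]
By Corollary~\ref{cor:norm-Lambda-reln} we have $\Lambda'_{q_{1}} = \s(P) + \Lambda_{q_{1}}$ in characteristic $2$, while $\Lambda_{q_{1}}(\bx_{+}) = \bx_{-}$ by the description of the dot operator in the proof of Lemma~\ref{lemma:unlink-tensor-n}. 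Therefore $\iota(\bx_{-}) = \s(P)\,\bx_{+} + \bx_{-}$, as claimed; as a sanity check, $\iota^{2}(\bx_{-}) = \s(P)\bx_{+} + \s(P)\bx_{+} + \bx_{-} = \bx_{-}$ in characteristic $2$, consistent with $\iota^{2}$ being the identity.

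The expected main obstacle is precisely the two bookkeeping points flagged above: ruling out a stray factor $T_{0}^{\pm 1}$ coming from the framing-dependence of $\Isharp(-;\Gamma_{\s})$ when we identify $S\comp D^{+}$ with $D^{+}$, and confirming that carrying the dot through the orientation-reversing cobordism genuinely sends $q$ to $\bar q$ rather than fixing it. Once these are pinned down, the rest follows formally from the quadratic relation of Corollary~\ref{cor:norm-Lambda-reln}.
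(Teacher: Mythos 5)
Your proposal is correct and follows essentially the same route as the paper: the paper's proof likewise rests on the observation that $S$ intertwines $\Lambda_{q}$ with $\Lambda'_{q}$ (your dot-pushing step) together with the relation $\Lambda_{q}+\Lambda'_{q}=\s(P)$ from Corollary~\ref{cor:norm-Lambda-reln}. Your explicit treatment of the framing/$T_{0}$ bookkeeping for $\iota(\bx_{+})=\bx_{+}$ is a point the paper leaves implicit, and your resolution of it is correct.
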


\begin{proof}
    Recall that $\bx_{-}=\Lambda_{q}\bx_{+}$. The cobordism $S$ intertwines the operator $\Lambda_{q}$ with
    $\Lambda'_{q}$. The formula for $\iota$ therefore follows from
    the relation $\Lambda_{q} + \Lambda_{q}'=\s(P)$ in
    Corollary~\ref{cor:norm-Lambda-reln}. 
\end{proof}

\subsection{Pants and copants}
\label{subsec:pants-copants}

Recall the standard cobordism called ``pants'', from the two-component
unlink to the one-component unknot:
\[
           \pants: U_{2} \to U_{1}.
\]
Its mirror image is ``copants'',
\[
         \copants: U_{1} \to U_{2}.
\]
If we identify $\Isharp(U_{1};\Gamma_{\s})$ and
$\Isharp(U_{2};\Gamma_{\s})$ with $V$ and $V\otimes V$ by the canonical
isomorphisms of Corollary~\ref{cor:unlink-canonical}, then pants and
copants give rise to maps
    \begin{equation}\label{eq:Isharp-pants}
    \begin{gathered}
        \Isharp(\pants; \Gamma_{\s}) : V\otimes V \to V \\
 \Isharp(\copants; \Gamma_{\s}) : V \to V\otimes V .
    \end{gathered}
    \end{equation}

\begin{proposition}\label{prop:pants-copants}
    Under the above identification, the maps arising from the pants 
    cobordism $\pants$ is given by:
\begin{equation}\label{eq:multn}
\begin{aligned}
    \bx_{+} \otimes \bx_{+} &\mapsto \bx_{+} \\
    \bx_{+} \otimes \bx_{-} &\mapsto \bx_{-} \\
    \bx_{-} \otimes \bx_{+} &\mapsto \bx_{-} \\
    \bx_{-} \otimes \bx_{-} &\mapsto \s(P) \bx_{-} + \s(Q) \bx_{+} .
\end{aligned}
\end{equation}
The map arising from the copants cobordism $\copants$ is:
\begin{equation}\label{eq:comultn}
\begin{aligned}
    \bx_{+}  &\mapsto \bx_{+}\otimes \bx_{-} + \bx_{-}\otimes \bx_{+} +
   \s( P)\bx_{+}\otimes \bx_{+}\\
   \bx_{-}  &\mapsto \bx_{-}\otimes \bx_{-} + \s(Q) \bx_{+} \otimes \bx_{+} .
\end{aligned}
    \end{equation}
\end{proposition}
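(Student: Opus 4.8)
The plan is to follow the template of \cite{KM-unknot}: determine the pants map directly from the disk cobordisms together with the dot-sliding formula, and determine the copants map by pairing it against the closed-surface evaluations of Lemma~\ref{lem:S-eval}.

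\emph{The pants cobordism.} Precomposing $\pants$ with the split disk cobordism $D^{+}_{2}=D^{+}\sqcup D^{+}\colon U_{0}\to U_{2}$ gives a connected genus-zero cobordism $U_{0}\to U_{1}$ with one boundary circle, hence one isotopic to $D^{+}$. By the first property of $\Phi_{2}$ in Lemma~\ref{lemma:unlink-tensor-n} this yields $\Isharp(\pants;\Gamma_{\s})(\bx_{+}\otimes\bx_{+})=\bx_{+}$. Since the pants surface is connected, a dot placed on it can be joined by a path to an incoming boundary circle or to the outgoing one; by the discussion in section~\ref{subsec:surfaces-with-dots} this gives the identities
\[
\Isharp(\pants;\Gamma_{\s})\circ(\Lambda_{q}\otimes\mathrm{id})
 =\Isharp(\pants;\Gamma_{\s})\circ(\mathrm{id}\otimes\Lambda_{q})
 =\Lambda_{q}\circ\Isharp(\pants;\Gamma_{\s}),
\]
where a dot on the first (resp.\ second) component of $U_{2}$ acts as $\Lambda_{q}\otimes\mathrm{id}$ (resp.\ $\mathrm{id}\otimes\Lambda_{q}$) on $V\otimes V$, again by naturality of $\Phi_{2}$ for split cobordisms with dots. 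Applying these identities to $\bx_{+}\otimes\bx_{+}$, using $\bx_{-}=\Lambda_{q}\bx_{+}$ and the quadratic relation of Corollary~\ref{cor:norm-Lambda-reln} (which gives $\Lambda_{q}\bx_{-}=\s(P)\bx_{-}+\s(Q)\bx_{+}$), produces all four lines of \eqref{eq:multn}.

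\emph{The copants cobordism.} Here I would compute the matrix entries $\langle\by_{i}\otimes\by_{j},\ \Isharp(\copants;\Gamma_{\s})(\bx_{k})\rangle$. Capping the two outgoing circles of $\copants$ with the disks defining $\by_{\pm}$ and the incoming circle with the disk defining $\bx_{\pm}$ yields a closed, connected surface of Euler characteristic $2$, i.e.\ a $2$-sphere, carrying a total of $d$ dots, where $d$ is the number of the three caps that carry one. By Lemma~\ref{lem:S-eval} this evaluation equals $\epsilon_{d}$, with $\epsilon_{0}=0$, $\epsilon_{1}=1$, $\epsilon_{2}=\s(P)$, $\epsilon_{3}=\s(P)^{2}+\s(Q)$. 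Combined with the pairings $\langle\by_{+},\bx_{+}\rangle=0$, $\langle\by_{+},\bx_{-}\rangle=\langle\by_{-},\bx_{+}\rangle=1$, $\langle\by_{-},\bx_{-}\rangle=\s(P)$ recorded in the proof of Lemma~\ref{lem:unknot-1}, solving the resulting linear system over $\cS$ (in characteristic $2$) for the coefficients of $\Isharp(\copants;\Gamma_{\s})(\bx_{+})$ and $\Isharp(\copants;\Gamma_{\s})(\bx_{-})$ gives \eqref{eq:comultn}.

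The step needing genuine care is the bookkeeping of dot orientations: since $\Lambda_{q}\ne\Lambda'_{q}$ in characteristic $2$, one must verify that the dots occurring on the orientable pants and copants surfaces carry local orientations compatible with the chosen orientations of $U_{1}$ and $U_{2}$, and that sliding a dot along a facet preserves this compatibility. This is precisely the orientation discussion of section~\ref{subsec:Lambda-class} and of \cite{KM-unknot}, and it carries over without change; the remaining content is the routine linear algebra above.
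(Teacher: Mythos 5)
Your proposal is correct, and your copants computation is exactly the paper's argument: the proof given there is precisely to pair against the dual basis $\by_{\pm}$ and reduce every matrix entry to the evaluation $\epsilon_{d}$ of a $2$-sphere with $d$ dots from Lemma~\ref{lem:S-eval} (the nondegeneracy of the pairing matrix $\bigl(\begin{smallmatrix}0&1\\1&\s(P)\end{smallmatrix}\bigr)$, which has determinant $1$, is what makes this determine the maps). Your treatment of the pants map is a mild variant: rather than computing the pairings $\langle \by_{i}, \Isharp(\pants;\Gamma_{\s})(\bx_{j}\otimes\bx_{k})\rangle$ by the same sphere evaluations, you compose with the split disks $D^{+}\sqcup D^{+}$, slide dots across the connected pants surface using the identities of section~\ref{subsec:surfaces-with-dots}, and invoke the quadratic relation of Corollary~\ref{cor:norm-Lambda-reln} to handle $\bx_{-}\otimes\bx_{-}$; this buys a direct, basis-free exhibition of the multiplication (and makes manifest that it is the unital $\cS$-algebra structure on $\cS[x]/(x^{2}+\s(P)x+\s(Q))$), at the cost of no new input, since the same lemmas underlie both routes. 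Your closing caveat about dot orientations is well placed: the pants and copants surfaces are orientable, so all dots can be taken compatible with a single global orientation, which is exactly the hypothesis under which $\epsilon_{d}$ is computed in Lemma~\ref{lem:S-eval}.
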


\begin{proof}
   Using the standard basis
    elements $\bx_{\pm}$ and dual basis $\by_{\pm}$, we can reduce
    this to the evaluation of a 2-sphere with dots, for which we have
    the formulae in Lemma~\ref{lem:S-eval}. 
\end{proof}

\subsection{The reduced homology of the unlink}

Let $\s: \cR\to\cS$ be a base change with $\s(T_{0})=\s(T_{1})$, so
that the reduced instanton homology $\Inat(K ; \Gamma_{\s})$ is
defined for a link $K$ with base-point.  As with other
``reduced'' versions of knot homologies, from the
definitions, $\Inat(U_{1} ; \Gamma_{\s}) \cong \Isharp(U_{0};
\Gamma_{\s}) \cong \cS$, and
\begin{equation}\label{eq:reduced-unlink}
    \begin{aligned}
        \Inat(U_{n}; \Gamma_{\s}) &\cong \Inat(U_{1} ; \Gamma_{\s})
        \otimes_{\cS} \Isharp(U_{n-1} ; \Gamma_{\s} )\\
        &\cong \Isharp(U_{n-1} ; \Gamma_{\s}).
    \end{aligned}
\end{equation}
In particular, $\Inat(U_{n}; \Gamma_{\s})$ is a free module of rank $2^{n-1}$.
We would like to compute the maps on  $\Inat(U_{n}; \Gamma_{\s})$
given by the pants and copants cobordisms, particularly when one the
incoming components of the cobordisms carries the base-point.

As a first step, we consider again the operator $\Lambda_{q}$, for
$q\in K$, now as an operator on $\Inat(K;\Gamma_{\s})$. We define
this as before, as in Definition~\ref{def:Lambda3},
\[
             \Lambda_{q} = \Lambda_{p_{1}q } + \Lambda_{p_{2}q } + \Lambda_{p_{3}q }
\]
where the three $p_{i}$ are dots chosen near the vertex, so that
$p_{2}$ and $p_{3}$ lie on the two edges that form the bigon in
$K^{\natural}$. 

If it happens that $q$ lies on the component of $K$ where the bigon
is attached,
we can take
$q=p_{1}$, in which case the first term $\Lambda_{p_{1}p_{1}}$ is
$P$. In this setting, to compute the operator, it is sufficient to
examine the case that $K$ is the unknot, by excision;
so $K^{\natural}$ can be taken to be the theta
graph.
Each of the three terms is then an operator $\cS\to \cS$, so
altogether $\Lambda_{q}$ is a multiplication operator,
\[
            \Lambda_{q} = A : \cS \to \cS.
\]
To compute $A$, we seeking to compute
\begin{equation}\label{eq:A-first}
           \Lambda_{q} = P + \Lambda_{p_{2} p_{1} } + \Lambda_{p_{3}p_{1} }.
\end{equation}
The operator $\Lambda_{p_{2}p_{1}}$ for the theta graph was computed
in \eqref{eq:local-lambdas}, up to a choice of two possibilities,
differing by $P$. The same ambiguity is present twice in
\eqref{eq:A-first}, for $ \Lambda_{p_{2} p_{1} }$ and for
$\Lambda_{p_{3} p_{1} }$, 
and it is resolved the same way in both terms. So the ambiguity
cancels, and we are left with a unique formula,
\[
     \Lambda_{q} = P +  T_{3}(T_{1}T_{2} + 
   T_{1}^{-1}T_{2}^{-1}) +  T_{2}(T_{1}T_{3} + T_{1}^{-1}T_{3}^{-1}) 
\]
which simplifies to
\[
              A = T_{1} (T_{2}T_{3} + T_{2}^{-1}T_{3}^{-1}).
\]
(We have omitted the base change $\s$ in our notation.) To summarize
this calculation in the case of the unknot, we have the following.

\begin{proposition}\label{prop:reduced-calc}
    For the unknot $U_{1}$, the reduced homology $\Inat(U_{1};
    \Gamma_{\s})$ is a free $\cS$-module of rank $1$, on which the
    operator $\Lambda_{q}$ ($q\in U_{1}$) acts as multiplication by
    $A$, where $A$ is the element above.
\end{proposition}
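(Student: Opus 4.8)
The plan is to assemble the pieces already set up in this section and in Section~\ref{subsec:Lambda-class}. First I would dispose of the module statement: by the reduction displayed in \eqref{eq:reduced-unlink}, $\Inat(U_{1};\Gamma_{\s})\cong\Isharp(U_{0};\Gamma_{\s})\cong\cS$, so $\Inat(U_{1};\Gamma_{\s})$ is free of rank $1$, with a generator inherited from $\bu_{0}$. Everything then reduces to identifying the multiplication operator $\Lambda_{q}$.

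For that, I would begin by using an excision argument (as in the proof of Lemma~\ref{lemma:Lambda-relations}) to pass to the case in which $K$ is itself the unknot, so that $K^{\natural}$ is the standard theta-graph in $S^{3}$ and the base-point dot $q$ may be taken to be the dot $p_{1}$ on the edge $e_{1}$ of $\theta$ that is not part of the bigon. By Definition~\ref{def:Lambda3} we then have
\[
     \Lambda_{q} = \Lambda_{p_{1}p_{1}} + \Lambda_{p_{2}p_{1}} + \Lambda_{p_{3}p_{1}},
\]
and the first summand equals $\s(P)$: this is the constant-arc computation $\lambda_{qq}=w_{2}(\uE_{q})$ from the proof of Lemma~\ref{lemma:lambda-relations-cohomology}, combined with the identification $w_{2}=\s(P)$ of Lemma~\ref{lemma:w-relations}.

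Next I would evaluate the two remaining terms using \eqref{eq:local-lambdas}, which gives, for a pair of dots lying on two distinct edges of $\theta$, the value $T_{3}(T_{1}T_{2}+T_{1}^{-1}T_{2}^{-1})$ for $\Lambda_{p_{2}p_{1}}$ and $T_{2}(T_{1}T_{3}+T_{1}^{-1}T_{3}^{-1})$ for $\Lambda_{p_{3}p_{1}}$ — but each only up to the binary ambiguity of which root of the quadratic one selects, the two choices differing by $\s(P)$. The one genuine point to be careful about, and the step I expect to be the main obstacle, is that this same ambiguity occurs in $\Lambda_{p_{2}p_{1}}$ and in $\Lambda_{p_{3}p_{1}}$ and must be resolved \emph{the same way} in both: the two pairs of edges $\{e_{1},e_{2}\}$ and $\{e_{1},e_{3}\}$ sit symmetrically at the single vertex of $K^{\natural}$, and the choice of root is governed by one global datum, so the two $\s(P)$ ambiguities cancel (we are in characteristic $2$).

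Finally I would collect terms in
\[
     \Lambda_{q} = \s(P) + T_{3}(T_{1}T_{2}+T_{1}^{-1}T_{2}^{-1}) + T_{2}(T_{1}T_{3}+T_{1}^{-1}T_{3}^{-1}),
\]
substituting the formula \eqref{eq:PQ-formulae} for $P$; mod $2$ the eight monomials collapse to $T_{1}T_{2}T_{3}+T_{1}T_{2}^{-1}T_{3}^{-1}$, which is exactly $A = T_{1}(T_{2}T_{3}+T_{2}^{-1}T_{3}^{-1})$. Applying $\s$ throughout then yields the stated multiplication operator, completing the proof.
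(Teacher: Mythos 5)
Your proposal is correct and follows essentially the same route as the paper's own argument: reduce to the theta graph by excision, identify $\Lambda_{p_{1}p_{1}}=\s(P)$ via the constant-arc relation, evaluate $\Lambda_{p_{2}p_{1}}$ and $\Lambda_{p_{3}p_{1}}$ from \eqref{eq:local-lambdas} with the two $\s(P)$-ambiguities resolved consistently so that they cancel in characteristic $2$, and simplify the sum to $A$. No gaps to report.
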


What lies behind the algebra here is the following observation. The
operator $\Lambda$ on the un-reduced homology $V = \Isharp(U_{1};
\Gamma_{\s})$ has minimum polynomial
\[
                  x^{2} + \s(P)x + \s(Q).
\]
When the base change has $\s(T_{0})=\s(T_{1})$, the minimum polynomial
factorizes as
\[
               (x + A)(x + A')
\]
where $A$ is as above and $A'=A+P$. This is the same observation as we
made in the introduction to this paper, at \eqref{eq:A-formulae}. 
Let us define $V^{\natural}
\subset V$ as
\[
\begin{aligned}
    V^{\natural} &= \ker( \Lambda+A) \\
    &= \im (\Lambda+A').
\end{aligned}
\]
So $V^{\natural}$ is the rank-1 $\cS$-submodule generated by the
element \[\bm=\bx_{-}
+ A' \bx_{+}.\] Then we have:

\begin{corollary}\label{cor:reduced-calc}
    The reduced homology  $\Inat(U_{1};
    \Gamma_{\s})$ is isomorphic as a module for $\cS[\Lambda]$ to the 
    submodule $V^{\natural} \subset  V$ generated by\/ $\bm$ above.
\end{corollary}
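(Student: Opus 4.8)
The plan is to see that each of $\Inat(U_{1};\Gamma_{\s})$ and $V^{\natural}$ is a free $\cS$-module of rank $1$ on which $\Lambda$ acts as multiplication by one and the same element $A\in\cS$. Once that is in hand, any $\cS$-linear isomorphism carrying a generator of one to a generator of the other automatically intertwines the two operators $\Lambda$ --- there is nothing further to check, since on both sides $\Lambda$ \emph{is} multiplication by the ring element $A$ --- so it is an isomorphism of $\cS[\Lambda]$-modules. We work throughout under the standing hypothesis $\s(T_{0})=\s(T_{1})$, which is what makes $\Inat$ defined and makes the quadratic $x^{2}+\s(P)x+\s(Q)$ factor as $(x+A)(x+A')$ with $\s(P)=A+A'$ and $\s(Q)=AA'$.

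The $\Inat$ side is already done: Proposition~\ref{prop:reduced-calc} asserts that $\Inat(U_{1};\Gamma_{\s})$ is free of rank $1$ over $\cS$ and that the operator $\Lambda_{q}$ acts there as multiplication by $A$, the element of \eqref{eq:A-formulae}. For the $V^{\natural}$ side, recall that $V^{\natural}=\ker(\Lambda+A)$; in characteristic $2$ this says precisely that $\Lambda v=Av$ for $v\in V^{\natural}$, so $\Lambda$ preserves $V^{\natural}$ and acts on it as multiplication by $A$. It then remains to record that $V^{\natural}$ is free of rank $1$, generated by $\bm=\bx_{-}+A'\bx_{+}$. Using $\Lambda\bx_{+}=\bx_{-}$, the quadratic relation $\Lambda^{2}+\s(P)\Lambda+\s(Q)=0$ of Corollary~\ref{cor:norm-Lambda-reln}, and $\s(P)=A+A'$, $\s(Q)=AA'$, one computes $\Lambda\bx_{-}=(A+A')\bx_{-}+AA'\bx_{+}$ and hence $\Lambda\bm=A\bm$, so $\bm\in V^{\natural}$; and $\cS\bm$ is free of rank $1$ because $\bm$ is $\bx_{-}$ plus an $\cS$-multiple of $\bx_{+}$ and so is part of an $\cS$-basis of $V$.

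The one genuine computation --- and essentially the only place there is anything to prove --- is the reverse inclusion $V^{\natural}\subseteq\cS\bm$. Applying $\Lambda+A$ to a general element one gets
\[
  (\Lambda+A)(a\bx_{+}+b\bx_{-}) = (a+bA')\,(\bx_{-}+A\bx_{+}),
\]
and since $\bx_{-}+A\bx_{+}$ is again part of an $\cS$-basis of $V$, the vanishing of the left-hand side forces $a=bA'$, i.e.\ the element is $b\bm$. With both sides identified as free rank-$1$ $\cS$-modules carrying the $\Lambda$-action by multiplication by $A$, the stated isomorphism of $\cS[\Lambda]$-modules follows. No naturality with respect to cobordism maps is claimed in the statement, so this abstract matching suffices; if such naturality were wanted it could be arranged by exhibiting a cobordism realizing the inclusion $\Inat(U_{1};\Gamma_{\s})\hookrightarrow V$, but that is not needed here.
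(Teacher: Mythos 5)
Your proposal is correct and follows the same route as the paper, which treats the corollary as immediate from Proposition~\ref{prop:reduced-calc} together with the observation that $V^{\natural}=\ker(\Lambda+A)$ is the free rank-one submodule generated by $\bm$, so that both sides are free rank-one $\cS$-modules with $\Lambda$ acting as multiplication by $A$. Your explicit verification that $(\Lambda+A)(a\bx_{+}+b\bx_{-})=(a+bA')(\bx_{-}+A\bx_{+})$, hence $\ker(\Lambda+A)=\cS\bm$, just fills in the elementary linear algebra the paper leaves implicit.
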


We can consider next the pants and copants cobordisms in the reduced
context. Let $U_{2}$ be the standard 2-component unlink with a
basepoint on the first component. We have, by excision,
\[
           \Inat(U_{2} ;\Gamma_{\s}) = V^{\natural} \otimes_{\cS} V.
\]
The pants and copants cobordisms provide maps
    \[
    \begin{gathered}
        \Inat(\pants; \Gamma_{\s}) : V^{\natural}\otimes V \to V^{\natural} \\
 \Inat(\copants; \Gamma_{\s}) : V^{\natural} \to V^{\natural} \otimes V .
    \end{gathered}
    \]
It is straightforward to verify that these coincide with the
restriction of the un-reduced versions \eqref{eq:Isharp-pants} to the
$\cS$-submodule $V^{\natural}$ generated by $\bm$. We can write these
maps out, in terms of the basis $\{\, \bx_{+}, \bm\,\}$ for the rank-2
$\cS$-module $V$:

\begin{proposition}\label{prop:pants-copants-reduced}
    When $\s(T_{0})=\s(T_{1})$ so that the reduced theory is defined, 
    the map  $V^{\natural}\otimes V \to V^{\natural}$ arising from the pants 
    cobordism 
\begin{equation}\label{eq:multn-red}
\begin{aligned}
    \bm \otimes \bx_{+} &\mapsto \bm \\
    \bm \otimes \bm &\mapsto P \, \bm .
\end{aligned}
\end{equation}
  The map $V^{\natural}\to V^{\natural}\otimes V$ arising from copants
  is
\begin{equation}\label{eq:comultn-red}
\begin{aligned}
    \bm  &\mapsto \bm \otimes \bm.
\end{aligned}
\end{equation}
\end{proposition}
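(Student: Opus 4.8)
The plan is to derive both formulae from the corresponding unreduced formulae of Proposition~\ref{prop:pants-copants} by restricting to the rank-$1$ submodule $V^{\natural}\subset V$. Recall from Corollary~\ref{cor:reduced-calc} that $V^{\natural}$ is the $\cS$-span of $\bm=\bx_{-}+A'\,\bx_{+}$, where $A$, $A'$ are the factors of the minimal polynomial of $\Lambda$ on $V$, so that $A+A'=\s(P)$ and $AA'=\s(Q)$; in particular $\s(P)\,A'=\s(Q)+(A')^{2}$. Recall also that when $\s(T_{0})=\s(T_{1})$ the excision/K\"unneth identification \eqref{eq:reduced-unlink} gives $\Inat(U_{2};\Gamma_{\s})=V^{\natural}\otimes_{\cS}V$, with the bigon attached to the component carrying the basepoint.

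First I would verify the geometric assertion made just before the Proposition: that $\Inat(\pants;\Gamma_{\s})$ and $\Inat(\copants;\Gamma_{\s})$ are the restrictions to $V^{\natural}\otimes V$ and to $V^{\natural}$, respectively, of the unreduced maps $\Isharp(\pants;\Gamma_{\s})$ and $\Isharp(\copants;\Gamma_{\s})$ of \eqref{eq:Isharp-pants}. The reduced cobordism foam is built from $\pants$ (resp.\ $\copants$) by performing the one-parameter bigon construction of Section~\ref{subsec:reduced} along an arc lying entirely on the basepoint-carrying component; splitting off a standard ball containing this arc and applying excision as in Proposition~\ref{prop:Kunneth-red}, the resulting chain map is computed by the very same weighted instanton count as in the unreduced case, but with the configuration space of the basepoint component replaced by $\bonf^{\natural}(U_{1})$ --- and this substitution is precisely what cuts $V$ down to $\Inat(U_{1};\Gamma_{\s})\cong V^{\natural}$. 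To see the restriction is even well defined, one uses that $\Isharp(\pants;\Gamma_{\s})$ intertwines $\Lambda\otimes 1$ (equivalently $1\otimes\Lambda$) on the source with $\Lambda$ on the target, as in Section~\ref{subsec:surfaces-with-dots}: since $\bm$ spans $\ker(\Lambda+A)$, the image of $V^{\natural}\otimes V$ lies in $\ker(\Lambda+A)=V^{\natural}$; symmetrically $\Isharp(\copants;\Gamma_{\s})(V^{\natural})\subset\ker(\Lambda\otimes 1+A)=V^{\natural}\otimes V$. I expect this identification to be the main obstacle: it is the one place where the geometry of the bigon enters, whereas everything after it is bookkeeping.

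With that in hand, the remaining step is a direct substitution. Writing $\bm=\bx_{-}+A'\,\bx_{+}$ and using \eqref{eq:multn}, I would compute $\Isharp(\pants)(\bm\otimes\bx_{+})=\bx_{-}+A'\bx_{+}=\bm$ and
\[
  \Isharp(\pants)(\bm\otimes\bm)=\s(P)\bx_{-}+\bigl(\s(Q)+(A')^{2}\bigr)\bx_{+},
\]
where the two cross terms $A'\bx_{-}$ cancel in characteristic $2$; the identity $\s(P)\,A'=\s(Q)+(A')^{2}$ then rewrites the right-hand side as $\s(P)\,\bm$, giving \eqref{eq:multn-red}. Likewise, from \eqref{eq:comultn},
\[
  \Isharp(\copants)(\bm)=\bx_{-}\otimes\bx_{-}+A'(\bx_{-}\otimes\bx_{+}+\bx_{+}\otimes\bx_{-})+\bigl(\s(Q)+\s(P)A'\bigr)\bx_{+}\otimes\bx_{+},
\]
and comparing this with $\bm\otimes\bm=\bx_{-}\otimes\bx_{-}+A'(\bx_{-}\otimes\bx_{+}+\bx_{+}\otimes\bx_{-})+(A')^{2}\bx_{+}\otimes\bx_{+}$, together with $\s(Q)+\s(P)A'=(A')^{2}$, shows $\Isharp(\copants)(\bm)=\bm\otimes\bm$, which is \eqref{eq:comultn-red}. (As elsewhere, the statement suppresses $\s$ and writes $P$ for $\s(P)$.) Since $\{\bx_{+},\bm\}$ is an $\cS$-basis of $V$, these values determine the reduced maps completely, which finishes the proof.
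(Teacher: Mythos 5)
Your proposal is correct and follows the same route as the paper: the paper simply asserts that the reduced pants/copants maps coincide with the restrictions of the unreduced ones to the submodule generated by $\bm$, and the stated formulae then follow from Proposition~\ref{prop:pants-copants} by the substitution $\bm=\bx_{-}+A'\bx_{+}$ together with the identities $A+A'=\s(P)$, $AA'=\s(Q)$, exactly as you compute. Your explicit justification of the restriction step (via excision and the intertwining of $\Lambda$) fills in what the paper dismisses as ``straightforward to verify,'' and your algebra checks out.
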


Returning again to $V$ in the unreduced case, we have a description of
it as an algebra over $\cR$ with a single generator $\n = \bx_{-}$ in
the form
\[
            V = \cR[\n] / (\n^{2} + P \/\n + Q).
\]    
As in the introduction, after a base change to a ring $\cS$ where $T_{0}=T_{1}$,
the characteristic polynomial $(x^{2} + Px +Q)$ factorizes as
$(x+A)(x+A')$ and over $\cS$ we have a presentation
\[
              V = \cS[\bm] / (\bm(\bm + P)).
\]    
The full co-multiplication of the Frobenius algebra $V$, arising from
the copants cobordism, in this
presentation is
\[
    \begin{aligned}
       \Delta: \mathbf{1} &\mapsto \bm\otimes \mathbf{1} +  \mathbf{1}\otimes
        \bm + P\/\mathbf{1}\otimes  \mathbf{1} \\
        \Delta: \bm &\mapsto \bm\otimes\bm.
    \end{aligned}
 \]
This is the Frobenius algebgra that gives rise to the graded Bar-Natan
variant of Khovanov homology, tensored by $\cS$.
 
\section{The spectral sequence}

\subsection{Families of metrics.}
Relevant to the construction of our spectral sequence are also the
maps that arise from a cobordism equipped with a family of
metrics. The material of \cite[section 3.9]{KM-unknot} again adapts to
local coefficients without change. We equip $(X,S)$ with
cylindrical ends and a family
of Riemannian metrics $G$ which vary only in a compact region. The
parameter space $G$ should be a compact manifold with boundary. After
choosing perturbations, the moduli spaces over $G$ define
homomorphisms of $\cS$-modules,
\[
        m_{G} : C^{\sharp}(Y_{0}, K_{0}; \Gamma_{\s}) \to   C^{\sharp}(Y_{1}, K_{1}; \Gamma_{\s}),
\]
where the complexes $ C^{\sharp}(Y_{i}, K_{i}; \Gamma_{\s})$ are those that
compute $\Isharp$.
The map $m_{G}$ is a chain map if $G$ has no
boundary. Otherwise, there is an extra term in the chain formula,
\[
  m_{\partial G} +   m_{G}\circ \partial =  \partial \circ m_{G}.
\]
(See  \cite[section 3.9]{KM-unknot} and \cite{KM-deformation}.)

\subsection{Skein exact triangle}

Fix again a $3$-manifold $Y$ with basepoint $y_{0}$ and a theta graph
$\theta\subset B(y_{0})$. We again write $Y^{o} \subset Y$ for the
complement of the neighborhood of $y_{0}$. Consider three webs $K_{2}$, $K_{1}$,
$K_{0}$ in $Y^{o}$ which are all identical outside a ball
$B\subset Y^{o}$ and which differ inside $B$ by the
skein moves as shown in Figure~\ref{fig:Tetrahedra-skein}. There are
standard cobordisms $S_{ij}$ from $K_{i}$ to $K_{j}$, each of which is
the addition of a standard $1$-handle in $[0,1]\times B$. Although the
webs may have vertices, there are  no vertices in the ball $B$, and
the picture coincides with that of \cite[section 6]{KM-unknot}. As in
\cite{KM-unknot,KM-deformation}, the cobordisms $S_{21}$, $S_{10}$ and $S_{02}$ give
rise to the maps in a 3-periodic long exact sequence of $\cS$-modules:
\[
          \cdots \to \Isharp(K_{2}; \Gamma_{\s}) \to \Isharp(K_{1}; \Gamma_{\s}) \to
          \Isharp(K_{0}; \Gamma_{\s}) \to  \Isharp(K_{2}; \Gamma_{\s}) \to \cdots.
\]

\subsection{Cubes of resolutions.} The above skein sequence can be seen
as a consequence of the fact that the chain complex $C^{\sharp}_{2}$
that computes $\Isharp(K_{2}; \Gamma_{\s})$ is quasi-isomorphic to  the
mapping cone of a chain map $C^{\sharp}_{1}\to C^{\sharp}_{0}$. As in
\cite{KM-unknot}, the skein sequence generalizes as follows. Suppose
that $Y^{o}$ contains $N$ disjoint balls $B_{1}$, \dots, $B_{N}$. For
each $v\in \{0,1,2\}^{N}$, let there be given a web
$K_{v}\subset Y^{o}$. Outside the balls, all the $K_{v}$ are the
same. Inside the ball $B_{i}$, the web $K_{v}$ coincides with on of
the models in Figure~\ref{fig:Tetrahedra-skein}, according to the
value of the coordinate $v_{i}$. We write $(C^{\sharp}_{v}, d_{v})$
for the standard chain complex that computes
$\Isharp(K_{v} ; \Gamma_{\s})$. (A choice of metric and perturbation is
involved.)

Among the $K_{v}$, we pick out as distinguished the web
$K_{\mathbf{2}}$, where 
\[
    \mathbf{2} = (2,2,\dots, 2).
\]
We also introduce the ``cube''
\[
          \bC^{\sharp} = \bigoplus_{v\in \{0,1\}^{N}} C^{\sharp}_{v}.
\]
For each $v > u$ in $\{0,1\}^{N}$, there is a standard cobordism
$S_{vu}$ from $K_{v}$ to $K_{u}$, obtained by adding $1$-handles in
each of the balls $B_{i}$ where the coordinates of $v$ and $u$
differ. If there are $n$ such coordinates, then $S_{vu}$ carries a
standard family $G_{vu}$ of metrics, of dimension $n-1$, as described
in \cite{KM-unknot}, which give rise to $\cS$-module homomorphisms
\[
           f_{vu} : C^{\sharp}_{v} \to C^{\sharp}_{u}.
\]
As a special case, we also define $f_{vv}=d_{v}$. We then define
\[
         \bF : \bC^{\sharp} \to \bC^{\sharp}
\]
as 
\[
           \bF = \bigoplus_{v\ge u} f_{vu}.
\]

\begin{theorem}[Theorem 6.8 of \cite{KM-unknot}]\label{thm:big-cube}
    The square of\/ $\bF$ is zero, so $(\bC^{\sharp}, \bF)$ is a complex
    of $\cS$-modules.
    Furthermore,  there
    is a chain map
\[
          (C^{\sharp}_{\mathbf{2}}, d_{\mathbf{2}}) \to (\bC^{\sharp}, \bF)
\]
   inducing an isomorphism in homology. In particular, the homology of
   the ``cube'' complex $ (\bC^{\sharp}, \bF)$ is isomorphic to
   $\Isharp(K_{\mathbf{2}} ; \Gamma_{\s})$.
\end{theorem}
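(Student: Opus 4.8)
The plan is to prove the theorem by induction on the number $N$ of balls, following the template of Theorem~6.8 of \cite{KM-unknot} and checking that the passage to local coefficients introduces nothing new. The single point to keep track of is that the weights $\epsilon([A])\in\cR$ are \emph{multiplicative} under concatenation and gluing of instantons: this follows from the Stokes-type formula \eqref{eq:nu-stokes} and its analogues for $\nu_{1},\nu_{2},\nu_{3}$, since the curvature integrals defining the exponents of the $T_{i}$ are additive when connections are glued end-to-end or at an interior point. Consequently every count of points in a moduli space, weighted by the local system, obeys the same algebraic identity as the corresponding unweighted count in \cite{KM-unknot}.

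First I would establish that $\bF^{2}=0$, so that $(\bC^{\sharp},\bF)$ is indeed a complex. For $v>u$ the map $f_{vu}$ counts, with local-system weights, the zero-dimensional moduli spaces over the standard family of metrics $G_{vu}$ on the handle cobordism $S_{vu}$; here $G_{vu}$ is a polytope of dimension one less than the number of differing coordinates, whose codimension-$1$ faces are the products $G_{vw}\times G_{wu}$ over $v>w>u$. Examining the ends of the one-dimensional parametrized moduli spaces over $G_{vu}$ --- the broken-trajectory ends contributing $d_{u}f_{vu}+f_{vu}d_{v}$, and the ends over $\partial G_{vu}$ contributing $\sum_{v>w>u}f_{wu}\circ f_{vw}$ --- yields the relation $\sum_{v\ge w\ge u}f_{wu}\circ f_{vw}=0$, which is precisely $\bF^{2}=0$. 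The only potentially dangerous codimension-$1$ phenomenon is bubbling at the two vertices of $\theta$; as recalled in the discussion preceding the theorem, this contributes the universal term $W$, which vanishes. There are no vertices inside the balls $B_{i}$, so all other codimension-$1$ strata are handled by the standard analysis, and multiplicativity of $\epsilon$ guarantees the weighted and unweighted relations coincide.

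For the inductive step, split off the last coordinate and write $\bC^{\sharp,i}=\bigoplus_{v_{N}=i}C^{\sharp}_{v}$ for $i=0,1$. The relation $\bF^{2}=0$ shows that the restriction of $\bF$ to each $\bC^{\sharp,i}$ is a differential and that the remaining (``direction-$N$'') component of $\bF$ is a chain map from $(\bC^{\sharp,1},\bF)$ to $(\bC^{\sharp,0},\bF)$ whose mapping cone is $(\bC^{\sharp},\bF)$. Moreover $(\bC^{\sharp,i},\bF)$ is exactly the cube complex built from the $N-1$ balls $B_{1},\dots,B_{N-1}$ for the family of links obtained by resolving $B_{N}$ to picture $i$. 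By the inductive hypothesis there is a quasi-isomorphism $\psi_{i}$ from $C^{\sharp}(K^{(i)}_{\mathbf{2}};\Gamma_{\s})$ to $(\bC^{\sharp,i},\bF)$, where $K^{(i)}_{\mathbf{2}}$ has $B_{1},\dots,B_{N-1}$ resolved to picture $\mathbf{2}$ and $B_{N}$ to picture $i$. Granting that $\psi_{0}$ and $\psi_{1}$ are natural with respect to the chain map $g$ induced by the handle attachment in $B_{N}$ on these $\mathbf{2}$-resolved links, they assemble to a quasi-isomorphism from $\mathrm{Cone}(g)$ onto $\mathrm{Cone}$ of the direction-$N$ map, i.e.\ onto $\bC^{\sharp}$. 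Finally the $N=1$ case --- the skein mapping-cone statement recalled at the start of this subsection, applied to the single ball $B_{N}$ with the ambient link resolved to $\mathbf{2}$ in the remaining balls --- identifies $C^{\sharp}(K_{\mathbf{2}};\Gamma_{\s})$ with $\mathrm{Cone}(g)$ up to quasi-isomorphism. Composing these identifications produces the required chain map $C^{\sharp}_{\mathbf{2}}\to\bC^{\sharp}$, which induces an isomorphism on homology; the last sentence of the theorem is then immediate.

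The main obstacle is the naturality used in the inductive step: one must know that the quasi-isomorphism supplied by the induction intertwines, up to an explicit chain homotopy, the cobordism map on the $\mathbf{2}$-resolved complexes with the direction-$N$ map on the cube faces, and that these homotopies cohere well enough for the iterated mapping cone to be well defined up to chain homotopy. This is exactly the functoriality of $\Isharp(\,\cdot\,;\Gamma_{\s})$ set up in Section~\ref{subsec:functoriality} combined with the families-of-metrics formalism of \cite[Section~3.9]{KM-unknot}, and the verification is formally the same as in \cite{KM-unknot}: the weights $\epsilon$ pass unchanged through every stretching, gluing and parametrized-moduli argument because they are multiplicative, so no geometric input is needed beyond what has already been assembled in the preceding sections.
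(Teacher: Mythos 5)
Your proposal is correct and follows essentially the same route as the paper, which proves this statement simply by invoking Theorem~6.8 of \cite{KM-unknot} together with the observation (made in the surrounding text) that the families-of-metrics formalism and the mapping-cone/induction argument adapt to local coefficients without change. Your reconstruction --- $\bF^{2}=0$ from the codimension-one faces of the metric polytopes $G_{vu}$ plus the vanishing of the vertex term $W$, induction on $N$ via iterated mapping cones anchored by the $N=1$ skein statement, and multiplicativity of the weights $\epsilon([A])$ as the reason the weighted counts satisfy the same identities as the unweighted ones --- is exactly the content being cited.
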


As is standard in Khovanov homology, the cube $\bC^{\sharp}$ has a
filtration (increasing, with our conventions),
\[
            \cF_{n} \bC^{\sharp} = \bigoplus_{
                                    \substack{v\in\{0,1\}^{N} \\ |v|
                                      \le n}} C^{\sharp}_{v}.
\]
There is a corresponding spectral sequence, just as in \cite[Corollary~8.1]{KM-unknot}.

\begin{corollary}\label{cor:spectral-sequence-R3}
For webs $K_{v}$ as above, there is a spectral sequence of
$\cS$-modules whose $E_{1}$ term is
\[
          \bigoplus_{v \in \{0,1\}^{N}} 
                \Isharp(K_{v} ;\Gamma_{\s})
\]
and which abuts to the instanton Floer homology $\Isharp(K_{v} ; \Gamma_{\s})$,
for $v=(2,\dots,2)$.  The differential $d_{1}$ is the sum of the maps
induced by the cobordisms $\Tigma_{vu}$ with $v>u$ and $|v-u|=1$. \qed
\end{corollary}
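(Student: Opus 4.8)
The plan is to apply the standard spectral sequence of a bounded filtered complex to the cube $(\bC^{\sharp}, \bF)$ of Theorem~\ref{thm:big-cube}, and then to read off the pages directly from the construction of $\bF$. First I would check that $\bF$ is compatible with the increasing filtration $\cF_{n}\bC^{\sharp}$: since $\bF = \bigoplus_{v\ge u} f_{vu}$ with $f_{vu}$ mapping $C^{\sharp}_{v}$ to $C^{\sharp}_{u}$ only when $u\le v$, and then $|u|\le |v|$, each summand $f_{vu}$ is filtration-nonincreasing, so $\bF(\cF_{n}\bC^{\sharp})\subseteq \cF_{n}\bC^{\sharp}$. The filtration is finite, with $\cF_{-1}\bC^{\sharp}=0$ and $\cF_{N}\bC^{\sharp}=\bC^{\sharp}$, so the associated spectral sequence converges, and its abutment is $H_{*}(\bC^{\sharp}, \bF)$, which by Theorem~\ref{thm:big-cube} is $\Isharp(K_{\mathbf{2}}; \Gamma_{\s})$.

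Next I would identify the low pages. The associated graded $\cF_{n}\bC^{\sharp}/\cF_{n-1}\bC^{\sharp}$ is $\bigoplus_{|v|=n} C^{\sharp}_{v}$, and the induced differential $d_{0}$ is exactly the part of $\bF$ of filtration degree $0$, namely the diagonal terms $f_{vv}=d_{v}$. Hence $E_{0} = \bigoplus_{v} C^{\sharp}_{v}$ with differential $\bigoplus_{v} d_{v}$, and passing to homology gives
\[
  E_{1} \;=\; \bigoplus_{v\in\{0,1\}^{N}} H_{*}\bigl(C^{\sharp}_{v}, d_{v}\bigr)
      \;=\; \bigoplus_{v\in\{0,1\}^{N}} \Isharp(K_{v}; \Gamma_{\s}),
\]
by the defining property of the complexes $(C^{\sharp}_{v}, d_{v})$. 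The differential $d_{1}$ on $E_{1}$ is induced by the part of $\bF$ of filtration degree $1$, i.e.\ by $\bigoplus_{|v|-|u|=1} f_{vu}$, and by construction $f_{vu}$ with $|v-u|=1$ is the ordinary chain map associated to the single $1$-handle cobordism $S_{vu}$ (which carries only a $0$-dimensional family of metrics). Therefore $d_{1}$ is the sum of the maps $\Isharp(S_{vu}; \Gamma_{\s})$ over pairs $v>u$ with $|v-u|=1$, as stated.

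There is no new analytic content beyond Theorem~\ref{thm:big-cube}: the substantive facts $\bF^{2}=0$ and the quasi-isomorphism with $(C^{\sharp}_{\mathbf{2}}, d_{\mathbf{2}})$ are supplied verbatim there, and the present statement is the formal spectral-sequence consequence, exactly parallel to \cite[Corollary~8.1]{KM-unknot} but with the coefficient ring enlarged to $\cS$ and the local system $\Gamma_{\s}$ inserted. The only point that deserves a line of care is the orientation of the filtration --- that with the increasing convention $\cF_{n}$ the differential $\bF$ is filtration-nonincreasing, so that $d_{r}$ drops filtration degree by $r$ and the spectral sequence runs in the claimed direction --- together with the trivial observation that convergence is automatic because the filtration has finite length $N+1$. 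I expect no genuine obstacle; the bookkeeping of the filtration conventions is the only thing to get right.
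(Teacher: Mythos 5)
Your proposal is correct and follows exactly the route the paper intends: the paper simply equips the cube $(\bC^{\sharp},\bF)$ of Theorem~\ref{thm:big-cube} with the increasing filtration $\cF_{n}\bC^{\sharp}$ and invokes the standard spectral sequence of a finitely filtered complex, citing the identical argument in \cite[Corollary~8.1]{KM-unknot}. Your identification of $E_{0}$, $E_{1}$, $d_{1}$, and the abutment matches that sketch, and the filtration bookkeeping you spell out is the only content there is.
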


\subsection{Pants and the \texorpdfstring{$E_{2}$}{E2} page}

The spectral sequence in Corollary~\ref{cor:spectral-sequence-R3} is
set up quite generally
for webs in a fixed 3-manifold, differing by skein moves inside fixed
balls. The standard application for this setup is to consider a plane
projection and have the fixed balls correspond to the crossings in the
projection.

So let $K$ be a link in $\R^{3}\subset S^{3}$ with a planar projection
giving a diagram $D$ in $\R^{2}$. Let $N$ be the number of crossings
in the diagram. As in \cite{Khovanov}, we can consider the $2^{N}$
possible smoothings of $D$, indexed by the points $v$ of the cube
$\{0,1\}^{N}$. The conventions we use for the labels $\{0,1\}$ is the
same as the convention in
\cite{Khovanov,Rasmussen-slice}, and is also consistent with the convention illustrated in
Figure~\ref{fig:Tetrahedra-skein}. The smoothings give  $2^{N}$ different
unlinks $K_{v}$ in the plane of the projection. For each $v\ge u$ in $\{0,1\}^{N}$,
we have our standard cobordism $\Tigma_{vu}$ from $K_{v}$ to $K_{u}$,
with its family of metrics.

We apply Corollary~\ref{cor:spectral-sequence-R3} to this
situation. We learn that there is a spectral sequence abutting to
$\Isharp(K ; \Gamma_{\s})$ whose $E_{1}$ term is
\[
   E_{1}= \bigoplus_{v\in \{0,1\}^{N}} \Isharp(K_{v} ; \Gamma_{\s}).
\]
and whose differential $d_{1}$ is
\begin{equation}
\label{eq:d1-sum}
     d_{1} = \sum_{|v-u|=1} 
     \Isharp(\Tigma_{vu} ; \Gamma_{\s}).
 \end{equation}

In this situation,  unlike the general case considered previously,
 each cobordism $\Tigma_{vu}$ with $|v-u|=1$ is a cobordism between
 \emph{planar unlinks}, obtained from a ``pair of
pants'' that either joins two components into one, or splits one
component into two. We have already computed
$\Isharp(U_{n};\Gamma_{\s})$ for a planar unlink $U_{n}$
(Corollary~\ref{cor:unlink-canonical}) as well as the maps that the
pants and copants cobordisms (section~\ref{subsec:pants-copants}). So
we completely understand the $E_{1}$ page and its differential
$d_{1}$. We have
\[
   E_{1}= \bigoplus_{v\in \{0,1\}^{N}} V^{\otimes n(v)}
\]
where $V$ is a free $\cS$-module of rank $2$, admitting a standard
basis $\bx_{+}$, $\bx_{-}$, and $n(v)$ indexes the components of
the unlink $K_{v}$. Whenever $v> u$ and $|v-u|=1$, the corresponding
summand of $d_{1}$ in \eqref{eq:d1-sum} involves only the factors $V$
of the tensor product that are adjacent to the vertex at which $v$ and
$u$ differ, where it is given by
\begin{equation}\label{eq:Frob-cS-1}
             \Isharp(\pants; \Gamma_{\s}) : V\otimes V\to V
\end{equation}
or
\begin{equation}\label{eq:Frob-cS-2}
            \Isharp(\copants; \Gamma_{\s}) :V \to V\otimes V
\end{equation}
depending on whether two components of $K_{v}$ merge in $K_{u}$, or
one component splits.

In the language of \cite{Khovanov-Frobenius}, the $\cS$-module $V$
equipped with the multiplication $\Isharp(\pants; \Gamma_{\s})$ and
comultiplication $\Isharp(\copants; \Gamma_{\s})$ is a self-dual, rank-2 Frobenius
system $\cF_{\s}$. As an algebra, its unit element is $\bx_{+}$, and its
co-unit is $\by_{+}$ (Definition~\ref{def:xy-U1}). The
multiplication is described completely by giving the square of the
element $x=\bx_{-}$, the formula for which is in \eqref{eq:multn}. So we can write it as
\[
        \cS[x] \big/ \bigl( x^{2} + \s(P) x + \s(Q) \bigr).
\]
Our description of $(E_{1}, d_{1})$ above coincides
with Khovanov's definition of the complex that computes the knot
homology group corresponding to this Frobenius system. There is only
the slight change of conventions, because of the historically reversed
roles of the two smoothings $\{\,0,1\,\}$. With that understood, we
can identify the $E_{2}$ page of the spectral sequence:

\begin{proposition}
    In the special case that the cube of resolutions is the one
    obtained from a planar diagram of a knot or link $K$,
    the $E_{2}$ page of the spectral sequence in
    Corolllary~\ref{cor:spectral-sequence-R3} is isomorphic to the
    knot homology $H(\bar{K}; \cF_{\s})$ in the notation of
    \cite{Khovanov-Frobenius}, where $\cF_{\s}$ is the rank-2 Frobenius
    system over $\cS$ given by the multiplication \eqref{eq:Frob-cS-1}
    and comultiplication \eqref{eq:Frob-cS-2}. Here $\bar K$ denotes
    the mirror image of $K$. \qed
\end{proposition}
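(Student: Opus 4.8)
The plan is to observe that both sides are computed by literally the same chain complex, so the proposition reduces to an identification of combinatorial data rather than a spectral-sequence argument. First I would recall from Khovanov's setup in \cite{Khovanov-Frobenius} that, given a rank-2 Frobenius system $\cF$ over a ring $\cS$, the knot homology $H(K;\cF)$ is defined as the homology of a cube complex: to each smoothing $v\in\{0,1\}^{N}$ of a diagram $D$ one assigns $V^{\otimes n(v)}$ (with $V$ the underlying module of $\cF$ and $n(v)$ the number of circles), and to each edge of the cube one assigns either the multiplication $V\otimes V\to V$ or the comultiplication $V\to V\otimes V$ of $\cF$, applied in the tensor factors adjacent to the changing crossing, with the usual signs. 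This is exactly the description of the pair $(E_{1},d_{1})$ established in the paragraphs preceding the proposition: by Corollary~\ref{cor:unlink-canonical} the $E_{1}$ term is canonically $\bigoplus_{v}V^{\otimes n(v)}$, and by \eqref{eq:d1-sum} together with \eqref{eq:Frob-cS-1}, \eqref{eq:Frob-cS-2} and Proposition~\ref{prop:pants-copants} the differential $d_{1}$ is precisely the edge maps of the Khovanov-type cube for the Frobenius system $\cF_{\s}$.

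Next I would make the identification of $\cF_{\s}$ explicit. From Proposition~\ref{prop:pants-copants} the multiplication on $V$ is determined by $\bx_{-}^{2}=\s(P)\bx_{-}+\s(Q)\bx_{+}$, so as an algebra $V\cong\cS[x]/(x^{2}+\s(P)x+\s(Q))$ with unit $\bx_{+}$; the co-unit is $\by_{+}$ by Definition~\ref{def:xy-U1}, and the comultiplication is as in \eqref{eq:comultn}. One checks immediately (e.g.\ by evaluating the Frobenius form on the basis, or directly against the $\cF_{5}$-comultiplication formulae in the introduction after the base change $r$ and then $\s$) that this is self-dual and coincides with the Frobenius system $F_{5}\otimes_{\s\comp r}\cS$; in the body of the paper this is the system denoted $\cF_{\s}$. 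So the cube complex of Corollary~\ref{cor:spectral-sequence-R3} \emph{is} the Khovanov-Rozansky-type complex $C(D;\cF_{\s})$, edge for edge, up to the mirror/orientation bookkeeping.

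The remaining point is the appearance of $\bar K$ rather than $K$, which I would handle exactly as in \cite{KM-unknot}: the conventions in Figure~\ref{fig:Tetrahedra-skein} for labelling the two smoothings $\{0,1\}$ are the historically reversed ones relative to \cite{Khovanov,Khovanov-Frobenius}, so the cube built from the diagram $D$ of $K$ with the present conventions is the cube that \cite{Khovanov-Frobenius} associates to the diagram $\bar D$ of the mirror $\bar K$. Since the Frobenius system $\cF_{\s}$ is self-dual, the comultiplication of $\cF_{\s}$ at a merge of $\bar D$ matches the multiplication at the corresponding split of $D$, so no further twisting is needed and the two complexes agree on the nose. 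Hence their homologies agree: the $E_{2}$ page is $H(\bar K;\cF_{\s})$.

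The main obstacle I anticipate is not any nontrivial mathematics but getting the bookkeeping exactly right: matching the cube differential's signs and the adjacency-of-tensor-factors convention in Corollary~\ref{cor:spectral-sequence-R3} against Khovanov's, and pinning down precisely how the $\{0,1\}$-relabelling turns $D$ into a diagram of $\bar K$ (including checking that the self-duality of $\cF_{\s}$ is what makes the merge/split roles swap cleanly). All of this is routine and is exactly the argument already carried out for constant coefficients in \cite{KM-unknot}; the local-coefficient case differs only in that the Frobenius system $\cF_{\s}$ now has the nontrivial structure constants $\s(P)$, $\s(Q)$, which have already been computed in Proposition~\ref{prop:pants-copants}. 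This is why the proof in the excerpt is reduced to a one-line \qed.
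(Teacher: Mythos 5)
Your proposal is correct and follows essentially the same route as the paper, which treats the proposition as immediate from the preceding identification of $(E_{1},d_{1})$ with Khovanov's cube complex for $\cF_{\s}$ (via Corollary~\ref{cor:unlink-canonical} and Proposition~\ref{prop:pants-copants}) together with the reversed smoothing conventions accounting for $\bar K$. The only cosmetic points are that sign bookkeeping is vacuous here since everything is over $\F_{2}$, and that the identification of $\cF_{\s}$ with $F_{5}\otimes_{\s\comp r}\cS$ is not needed for this proposition itself but only for the subsequent deduction of Theorem~\ref{thm:F5-ss}.
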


\begin{corollary}\label{cor:ss-skeleton}
    For a knot or link $K$ in $\R^{3}$, there is a spectral sequence
    whose $E_{2}$ page is Khovanov's homology  $H(\bar{K}; \cF_{\s})$
    corresponding to the Frobenius system $\cF_{\s}$ and which abuts to
    the instanton homology with local coefficients, $\Isharp(K;
    \Gamma_{\s})$. \qed
\end{corollary}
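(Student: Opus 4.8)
The plan is to obtain the corollary by combining two results already in hand: the cube-of-resolutions spectral sequence of Corollary~\ref{cor:spectral-sequence-R3}, specialized to the cube arising from a planar diagram, together with the identification of its $E_2$ page in the proposition immediately preceding. First I would fix a planar diagram $D$ of $K\subset\R^3\subset S^3$ with $N$ crossings, and take the $N$ balls $B_1,\dots,B_N$ of the general setup to be small balls about the crossings, so that inside each $B_i$ the three local models of Figure~\ref{fig:Tetrahedra-skein} realize the two smoothings (coordinate $v_i=0$ or $1$) and the original crossing (coordinate $v_i=2$). With these choices $K_{\mathbf 2}=K$, and Corollary~\ref{cor:spectral-sequence-R3} furnishes a spectral sequence of $\cS$-modules with
\[
  E_1=\bigoplus_{v\in\{0,1\}^N}\Isharp(K_v;\Gamma_{\s}),
\]
with $d_1$ equal to the sum \eqref{eq:d1-sum} of the maps induced by the elementary cobordisms $\Tigma_{vu}$, $|v-u|=1$, and abutting to $\Isharp(K_{\mathbf 2};\Gamma_{\s})=\Isharp(K;\Gamma_{\s})$.

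Next I would identify $(E_1,d_1)$ explicitly. Since every $K_v$ is a planar unlink, Corollary~\ref{cor:unlink-canonical} identifies $\Isharp(K_v;\Gamma_{\s})$ canonically with $V^{\otimes n(v)}$ for the rank-$2$ free $\cS$-module $V$; and since every $\Tigma_{vu}$ with $|v-u|=1$ is a single pair of pants, Proposition~\ref{prop:pants-copants} computes the corresponding summand of $d_1$ as the multiplication \eqref{eq:Frob-cS-1} or the comultiplication \eqref{eq:Frob-cS-2} of the rank-$2$ Frobenius system $\cF_{\s}=\cS[x]/(x^2+\s(P)x+\s(Q))$, acting on the tensor factors adjacent to the crossing in question. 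Hence $(E_1,d_1)$ is exactly Khovanov's complex attached to $\cF_{\s}$, and the preceding proposition records that its homology is $H(\bar K;\cF_{\s})$; therefore $E_2\cong H(\bar K;\cF_{\s})$. The abutment to $\Isharp(K;\Gamma_{\s})$ is already part of the statement of Corollary~\ref{cor:spectral-sequence-R3} (the filtration $\cF_n\bC^{\sharp}$ is finite, so there is no convergence issue), and this gives the corollary.

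I expect no serious obstacle here: the corollary is essentially the conjunction of Corollary~\ref{cor:spectral-sequence-R3}, Corollary~\ref{cor:unlink-canonical}, and Proposition~\ref{prop:pants-copants}. The one point that genuinely needs care is the bookkeeping of smoothing and orientation conventions, which is what forces the mirror image $\bar K$ rather than $K$ to appear on the $E_2$ page; but this matching of $(E_1,d_1)$ with Khovanov's complex has already been dispatched in the discussion leading to the preceding proposition, so nothing further is required.
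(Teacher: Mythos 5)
Your proposal is correct and follows essentially the same route as the paper: the corollary is obtained by specializing Corollary~\ref{cor:spectral-sequence-R3} to the cube of resolutions of a planar diagram, identifying the $E_1$ page via Corollary~\ref{cor:unlink-canonical} and the differential $d_1$ via Proposition~\ref{prop:pants-copants}, and recognizing $(E_1,d_1)$ as Khovanov's complex for the Frobenius system $\cF_{\s}$ (with the mirror $\bar K$ arising from the reversed smoothing conventions). Nothing further is needed.
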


Theorem~\ref{thm:F5-ss} in the introduction, along with its two
corollaries, are obtained directly from
Corollary~\ref{cor:ss-skeleton} by identifying the Frobenius system
$\cF_{\s}$ in each case, to compare it with
those described in the notation of \cite{Khovanov-Frobenius}. We begin
with the
case that $\cS=\cR$ and $\s=1$ (i.e.~the case of the local system
$\Gamma$). Here the resulting Frobenius system $\cF$ corresponds to
\[
        \cR[x] \big/ \bigl( x^{2} + P x + Q \bigr).
\]
As explained in the introduction, the universal example from
\cite{Khovanov-Frobenius}, when reduced mod $2$, is a Frobenius
system $F_{5}$ over
\[
             R_{5} = \F_{2}[h,t].
\]
Its multiplication is given by
\[
            R_{5}[x] \big/ \big( x^{2} + h x + t \bigr).
\]
Since the comultiplications can be compared similarly, we see that the
Frobenius $\cF$ arising from $\Isharp$ with coefficient system
$\Gamma$ is $F_{5}\otimes_{r} \cR$, where $r$ maps $h$ to $P$ and $t$
to $Q$. Theorem~\ref{thm:F5-ss} is therefore a consequence of
Corollary~\ref{cor:ss-skeleton}.

Corollaries~\ref{cor:BN-ss} and \ref{cor:barBN-ss} follow from this
universal version by base change, as explained in the introduction.

\subsection{The spectral sequence for reduced homologies}

There is also a version of Corollaries~\ref{cor:BN-ss} and
\ref{cor:barBN-ss} for the reduced homology theories. Given a link
with a base-point, and given a diagram for the link such that the
base-point does not lie at a crossing, we may form again the cube of
resolutions, and for each vertex of the cube we now have a planar
unlink with a single marked point. Let $\s:\cR\to\cS$ be a base change
with $\s(T_{0}) = \s(T_{1})$, so that the reduced theory $\Inat$ is
defined. The basic spectral sequence described in
Corollary~\ref{cor:spectral-sequence-R3} has a reduced counterpart, whose statement and
proof are essentially the same:

\begin{proposition}\label{prop:reduced-spectral-sequence-R3}
There is a spectral sequence of
$\cS$-modules whose $E_{1}$ term is
\[
          \bigoplus_{v \in \{0,1\}^{N}} 
                \Inat(K_{v} ;\Gamma_{\s})
\]
and which abuts to the instanton Floer homology $\Isharp(K_{v} ; \Gamma_{\s})$,
for $v=(2,\dots,2)$.  The differential $d_{1}$ is the sum of the maps
induced by the cobordisms $\Tigma_{vu}$ with $v>u$ and $|v-u|=1$. \qed
\end{proposition}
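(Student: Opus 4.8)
The plan is to run the argument of Corollary~\ref{cor:spectral-sequence-R3} with $K^{\sharp}$ and $\Isharp$ systematically replaced by the reduced objects $K^{\natural}$ and $\Inat$. The essential ingredient is a reduced counterpart of Theorem~\ref{thm:big-cube}: writing $\Cnat_v$ for the Floer complex computing $\Inat(K_v;\Gamma_{\s})$, one wants the distinguished complex $(\Cnat_{\mathbf{2}}, d_{\mathbf{2}})$ to be chain-homotopy equivalent to the total complex of the cube $\bigl(\bigoplus_{v\in\{0,1\}^{N}}\Cnat_v,\ \bF^{\natural}\bigr)$, where $\bF^{\natural}=\bigoplus_{v\ge u}f^{\natural}_{vu}$ is assembled from the chain maps and families-of-metrics homotopies $f^{\natural}_{vu}$ induced by the standard $1$-handle cobordisms $S_{vu}$, now carrying the cylindrical bigon trajectory but otherwise unchanged.

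First I would observe that the reduced theory and all of its cobordism maps are already available from section~\ref{subsec:reduced}. Since the basepoint of the diagram is assumed not to lie at a crossing, the bigon modification of Figure~\ref{fig:reduced} is carried out in a ball disjoint from every crossing ball $B_i$; hence each cobordism $S_{vu}$ and each standard family of metrics $G_{vu}$ from the cube-of-resolutions construction upgrades to a based cobordism $S^{\natural}_{vu}$, and these produce the homomorphisms $f^{\natural}_{vu}\colon\Cnat_v\to\Cnat_u$ together with the relation $\bF^{\natural}\circ\bF^{\natural}=0$ in the usual way. Next I would argue that the quasi-isomorphism $(\Cnat_{\mathbf{2}},d_{\mathbf{2}})\to\bigl(\bigoplus_v\Cnat_v,\bF^{\natural}\bigr)$ holds by exactly the reasoning of \cite[\S6]{KM-unknot} that underlies Theorem~\ref{thm:big-cube}: that reasoning consists of neck-stretching along the spheres separating the balls $B_i$ from the rest of $Y$, together with the attendant gluing theory, and is purely \emph{local} near the $B_i$. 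The fixed global modification that converts $K_v$ into $K^{\natural}_v$ takes place where nothing is being stretched, and so does not affect any of these compactness or gluing statements — this is the same device by which \cite{KM-unknot} deduces its reduced skein triangle from the unreduced one.

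With the reduced cube theorem in hand, I would finish exactly as after Theorem~\ref{thm:big-cube}: equip $\bigoplus_v\Cnat_v$ with the increasing filtration by $\bigoplus_{|v|\le n}\Cnat_v$ and take the resulting spectral sequence. Its $E_1$ page is $\bigoplus_{v}H_*(\Cnat_v,d_v)=\bigoplus_{v\in\{0,1\}^{N}}\Inat(K_v;\Gamma_{\s})$, the differential $d_1$ is the sum of the maps induced by the edge cobordisms $\Tigma_{vu}$ with $v>u$ and $|v-u|=1$, and the spectral sequence abuts to the homology of the total complex, namely $\Inat(K_{\mathbf{2}};\Gamma_{\s})$, by the standard argument of \cite[Corollary~8.1]{KM-unknot}.

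The step I expect to be the main obstacle is precisely the reduced form of Theorem~\ref{thm:big-cube}: one must confirm that no new contributions arise at the vertices of the bigon when the metric on $Y$ is stretched along the spheres surrounding the $B_i$. Because $\Inat$ is defined by the same bifold gauge theory that underlies $\Isharp$, with the bigon playing the marking role that $\theta$ plays there, I expect this to be a matter of bookkeeping rather than of new analysis; the bigon lies in a region of fixed geometry, so it contributes nothing to the boundaries of the relevant zero- and one-dimensional moduli spaces beyond what already occurs for $\Isharp$.
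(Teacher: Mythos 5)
Your proposal is correct and follows essentially the same route as the paper, which simply asserts that the statement and proof are the reduced counterparts of Corollary~\ref{cor:spectral-sequence-R3} and Theorem~\ref{thm:big-cube}, relying on exactly the locality observation you make (the bigon modification occurs away from the crossing balls, so the neck-stretching and gluing arguments are unaffected). Your conclusion that the abutment is $\Inat(K_{\mathbf{2}};\Gamma_{\s})$ is the intended one, consistent with the corollaries that follow.
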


The condition that $\s(T_{0})=\s(T_{1})$
implies that the Frobenius system $\cF_{\s}$ has a description in
which the algebra is
\[
             \cS[M] / (M^{2} + \s(P) M)
\]
and the comultiplication is given by
\[
      \begin{aligned}
        1&\mapsto 1\otimes M + M\otimes 1 + \s(P)(1\otimes 1) \\
       M& \mapsto M\otimes M.
    \end{aligned}
\]

In such a situation, there is a reduced link homology $\tilde H (\bar
K ; \cF_{s})$ obtained from
the cube of resolutions. It is defined from a complex $\tilde{C}$ for
which the contribution $\tilde C_{v}$ from a vertex of the cube is
\[
          \tilde \cA \otimes_{\cS} \cA \otimes_{\cS} \cdots \otimes_{\cS} \cA
\]
where $\cA$ is the Frobenius algebra of $\cF_{\s}$ and $\tilde \cA$ is
the $\cS$-submodule generated by $m$. The tensor product is over all components of the unlink $K_{v}$
and the factor $\tilde \cA$ corresponds to the component with the
basepoint. The edge maps as usual come from the multiplication and
comultiplication, restricted to $\tilde \cA \subset \cA$ if
necessary.

Using Proposition~\ref{prop:reduced-calc} and
Corollary~\ref{cor:reduced-calc} for the edges involving the component
with the base point, we can match up the differential $d_{1}$ in
Proposition~\ref{prop:reduced-spectral-sequence-R3} with the
multiplication and comultiplication maps of $\tilde \cA\otimes \cA \to
\tilde \cA$ and $\tilde \cA \to \tilde \cA \otimes \cA$.

We obtain in this way a reduced counterpart to Corollary~\ref{cor:BN-ss}.

\begin{corollary}\label{cor:BN-ss-reduced}
There is spectral sequence of modules over the Laurent series ring
$\cS_{\BN}$ in three variables,  from the reduced version of graded Bar-Natan homology in
characteristic $2$,
\[
     \widetilde{\BN}(\bar K) \otimes_{r_{1}} \cS_{\BN} \implies \Inat(K ; \Gamma_{\BN}),
\]
to the reduced instanton homology group with coefficients in the local system 
$\Gamma_{\BN}= \Gamma \otimes_{\s_{\bn}}\cS_{\BN}$, where the base
change $\s_{bn}$ is given by \eqref{eq:sbn-base-change}. \qed
\end{corollary}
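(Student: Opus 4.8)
The plan is to derive Corollary~\ref{cor:BN-ss-reduced} from Proposition~\ref{prop:reduced-spectral-sequence-R3} in exactly the way Corollary~\ref{cor:BN-ss} was derived from the unreduced spectral sequence, substituting the reduced building blocks (Propositions~\ref{prop:reduced-calc} and \ref{prop:pants-copants-reduced}, and Corollary~\ref{cor:reduced-calc}) for their unreduced counterparts.

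First I would note that the base change $\s_{\bn}$ of \eqref{eq:sbn-base-change} satisfies $\s_{\bn}(T_{0}) = \s_{\bn}(T_{1}) = T_{1}$, so both the reduced instanton homology $\Inat(K;\Gamma_{\BN})$ and the reduced cube-of-resolutions spectral sequence of Proposition~\ref{prop:reduced-spectral-sequence-R3} are defined (taking $\s = \s_{\bn}$ and $\cS = \cS_{\BN}$). Fixing a diagram of $K$ with the basepoint away from every crossing, the $E_{1}$ page is $\bigoplus_{v}\Inat(K_{v};\Gamma_{\BN})$, where $K_{v}$ runs over the planar unlinks obtained from the smoothings. By excision together with Corollary~\ref{cor:reduced-calc}, the summand at $v$ is $V^{\natural}\otimes_{\cS_{\BN}} V^{\otimes(n(v)-1)}$, with the distinguished factor $V^{\natural} = \ker(\Lambda + A) \subset V$ on the component of $K_{v}$ that carries the basepoint and $A,A'$ the elements of \eqref{eq:A-formulae}.

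Next I would identify the differential $d_{1}$. On the edges of the cube not touching the basepoint component it is the unreduced pants/copants map; on the edges that do touch it, Proposition~\ref{prop:pants-copants-reduced} shows it is the restriction of those maps to the submodule $\tilde{\cA} = V^{\natural} = \cS_{\BN}\cdot\bm$. Thus $(E_{1},d_{1})$ is precisely the reduced Khovanov-type complex $\tilde{C}$ for the rank-2 Frobenius system $\cF_{\s_{\bn}}$ over $\cS_{\BN}$, with the basepoint factor replaced by $\tilde{\cA}$; hence $E_{2}\cong\tilde{H}(\bar K;\cF_{\s_{\bn}})$. Finally, because $\s_{\bn}(T_{0})=\s_{\bn}(T_{1})$, the characteristic polynomial $x^{2} + \s_{\bn}(P)x + \s_{\bn}(Q)$ factors as $(x+A)(x+A')$, so with $M = \bm$ the algebra $\cF_{\s_{\bn}}$ becomes $\cS_{\BN}[M]/(M^{2} + \s_{\bn}(P)M)$ with the Bar-Natan comultiplication, i.e.\ $\cF_{\s_{\bn}} = F_{\bn}\otimes_{r_{1}}\cS_{\BN}$ with $r_{1}:h\mapsto\s_{\bn}(P)$. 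Since $\cS_{\BN}$ is free over $\F_{2}[h]$ via $r_{1}$ (conditions \ref{item:factors-Isharp} and \ref{item:free-h-Isharp} are verified for $\s_{\bn}$ in the introduction), the universal coefficient theorem gives $\tilde{H}(\bar K;\cF_{\s_{\bn}})\cong\widetilde{\BN}(\bar K)\otimes_{r_{1}}\cS_{\BN}$, and the spectral sequence abuts to $\Inat(K;\Gamma_{\BN})$, as claimed.

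I expect the only real content to be the matching of $d_{1}$ with the reduced combinatorial differential along the edges involving the basepoint component — in particular, checking that when the basepoint component merges with, or splits off, an ordinary component, the instanton maps restricted to $V^{\natural}$ agree on the nose with the reduced Frobenius maps $\tilde{\cA}\otimes\cA\to\tilde{\cA}$ and $\tilde{\cA}\to\tilde{\cA}\otimes\cA$. This is exactly what Propositions~\ref{prop:reduced-calc} and \ref{prop:pants-copants-reduced} supply, since they show $V^{\natural}$ is preserved by all the relevant operators; the remaining steps (excision, the universal-coefficient argument, and the factorization of the characteristic polynomial) are routine and parallel to the unreduced case.
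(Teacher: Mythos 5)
Your proposal is correct and follows essentially the same route as the paper: the paper derives this corollary from Proposition~\ref{prop:reduced-spectral-sequence-R3} by matching $d_{1}$ with the reduced Frobenius maps via Proposition~\ref{prop:reduced-calc}, Corollary~\ref{cor:reduced-calc} and Proposition~\ref{prop:pants-copants-reduced}, and then identifying $\cF_{\s_{\bn}}$ with the Bar-Natan system exactly as in the unreduced Corollary~\ref{cor:BN-ss}. No gaps.
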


The reduced version of Corollary~\ref{cor:barBN-ss}, for filtered
Bar-Natan homology, can be formulated in the same way:

\begin{corollary}\label{cor:barBN-ss-reduced}
For a knot or link $K$, let $\widetilde{\FBN}(K)$ denote the
reduced version of filtered
Bar-Natan homology over $\F_{2}$. Then there is a spectral sequence of
vector spaces over\/ $\F_{4}$,
\[
  \widetilde{\FBN}(\bar K)\otimes \F_{4} \implies \Inat(K ; \Gamma_{\FBN}),
\]
where $\Gamma_{\FBN}$ is the local system of\/ $\F_{4}$ vector
obtained from $\Gamma$ by the base change \eqref{eq:filtered-BN-s}.
\qed
\end{corollary}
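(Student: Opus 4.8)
The plan is to run the argument of Corollary~\ref{cor:BN-ss-reduced} essentially verbatim, but with the base change $\s_{\fbn}\colon\cR\to\F_{4}$ of \eqref{eq:filtered-BN-s} in place of $\s_{\bn}$. First one checks the hypotheses needed for the reduced theory: since $\s_{\fbn}(T_{0})=\s_{\fbn}(T_{1})=1$, the reduced instanton homology $\Inat(K;\Gamma_{\FBN})$ is defined, and Proposition~\ref{prop:reduced-spectral-sequence-R3} (with $\cS=\F_{4}$ and $\s=\s_{\fbn}$) provides a spectral sequence of $\F_{4}$-vector spaces with $E_{1}=\bigoplus_{v\in\{0,1\}^{N}}\Inat(K_{v};\Gamma_{\FBN})$, abutting to $\Inat(K;\Gamma_{\FBN})$.

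Next one identifies the $E_{2}$ page with a combinatorial object. Because $\s_{\fbn}(P)=1$ and $\s_{\fbn}(Q)=0$ (see after \eqref{eq:filtered-BN-s}), the Frobenius system $\cF_{\s_{\fbn}}$ over $\F_{4}$ produced by the pants and copants cobordisms has algebra $\F_{4}[M]/(M^{2}+M)$ together with the Bar-Natan comultiplication; that is, it is precisely the filtered Bar-Natan Frobenius system (obtained from $(R_{5},F_{5})$ by setting $t=0$, $h=1$) with coefficients extended trivially from $\F_{2}$ to $\F_{4}$. Exactly as in the discussion preceding Corollary~\ref{cor:BN-ss-reduced}---using Proposition~\ref{prop:reduced-calc} and Corollary~\ref{cor:reduced-calc} for the distinguished component carrying the base-point, where $\Lambda_{q}$ acts on $\Inat(U_{1};\Gamma_{\FBN})$ by multiplication by the specialization of $A$ (here equal to $1$) and hence preserves the submodule $V^{\natural}$ generated by $\bm$, together with Proposition~\ref{prop:pants-copants-reduced} for the restricted pants and copants maps---one sees that the pair $(E_{1},d_{1})$ is Khovanov's reduced cube-of-resolutions complex for $\cF_{\s_{\fbn}}$ applied to $\bar K$. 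Thus $E_{2}\cong\widetilde{H}(\bar K;\cF_{\s_{\fbn}})$.

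Finally one recognizes this as $\widetilde{\FBN}(\bar K)\otimes\F_{4}$. The composite $\s_{\fbn}\comp r\colon R_{5}\to\F_{4}$ sends $h\mapsto\s_{\fbn}(P)=1$ and $t\mapsto\s_{\fbn}(Q)=0$, so it factors as $R_{5}\to\F_{2}\hookrightarrow\F_{4}$, where the first map is the base change defining filtered Bar-Natan homology and the second is the field inclusion. Since $\F_{4}$ is free of rank $2$, hence flat, over $\F_{2}$, the reduced cube-of-resolutions complex for $\cF_{\s_{\fbn}}$ is the reduced filtered Bar-Natan complex over $\F_{2}$ base-changed along $\F_{2}\hookrightarrow\F_{4}$, and homology commutes with this flat base change; hence $\widetilde{H}(\bar K;\cF_{\s_{\fbn}})\cong\widetilde{\FBN}(\bar K)\otimes_{\F_{2}}\F_{4}$. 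Combined with the identification of the abutment from Proposition~\ref{prop:reduced-spectral-sequence-R3}, this yields the asserted spectral sequence.

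I do not expect any new analytic input: the construction of the spectral sequence and the identification of $d_{1}$ with the combinatorial differential are formally the same as in the graded case of Corollary~\ref{cor:BN-ss-reduced}. The only point that needs a word of care is coefficient-theoretic: the freeness hypothesis \ref{item:free-h-Isharp} over $S_{\bn}=\F_{2}[h]$ fails for $\cS=\F_{4}$ with $h\mapsto\s_{\fbn}(P)=1$, so one cannot invoke the graded-to-trivially-extended identification in that form; instead one uses, exactly as in the unreduced Corollary~\ref{cor:barBN-ss}, that $\s_{\fbn}\comp r$ factors through $\F_{2}$ and that $\F_{4}/\F_{2}$ is a flat (indeed free) extension.
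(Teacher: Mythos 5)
Your proposal is correct and follows exactly the route the paper intends: the paper gives no separate argument for this corollary (it is stated with a \qed, deferring to Proposition~\ref{prop:reduced-spectral-sequence-R3}, the identification of the reduced $E_1$ page via Proposition~\ref{prop:reduced-calc} and Corollary~\ref{cor:reduced-calc}, and the base-change discussion used for Corollary~\ref{cor:barBN-ss}), and your write-up is a faithful expansion of precisely that argument. Your closing observation --- that the freeness hypothesis over $\F_{2}[h]$ fails for $\F_{4}$ and one must instead factor $\s_{\fbn}\comp r$ through $\F_{2}$ and use flatness of $\F_{4}$ over $\F_{2}$ --- is a correct and worthwhile point of care that the paper leaves implicit.
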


\bibliographystyle{abbrv}
\bibliography{ibn}

\end{document}